\newcommand{\Hom}{\mathop{\mathrm{Hom}}\nolimits}
\newcommand{\CAlg}{\mathop{\mathrm{CAlg}}\nolimits}
\newcommand{\Ker}{\mathop{\mathrm{Ker}}\nolimits}
\newcommand{\cmod}{\textrm{-}\mathrm{mod}}
\newcommand{\Cone}{\mathrm{Cone}}
\newcommand{\Cyl}{\mathrm{Cyl}}
\newcommand{\Path}{\mathrm{Path}}
\newcommand{\cA}{\mathcal{A}}
\newcommand{\cB}{\mathcal{B}}
\newcommand{\cC}{\mathcal{C}}
\newcommand{\cD}{\mathcal{D}}
\newcommand{\cF}{\mathcal{F}}
\newcommand{\cP}{\mathcal{P}}
\newcommand{\cV}{\mathcal{V}}
\newcommand{\fg}{\mathfrak{g}}
\newcommand{\fk}{\mathfrak{k}}
\newcommand{\fS}{\mathfrak{S}}
\newcommand{\bL}{\mathbb{L}}
\newcommand{\bR}{\mathbb{R}}
\theoremstyle{plain}
\newtheorem{thm}{Theorem}[subsection]
\newtheorem{cor}[thm]{Corollary}
\newtheorem{lem}[thm]{Lemma}
\newtheorem{prop}[thm]{Proposition}
\theoremstyle{definition}
\newtheorem{cons}[thm]{Construction}
\newtheorem{ex}[thm]{Example}
\newtheorem{note}[thm]{Notation}
\newtheorem{rem}[thm]{Remark}
\begin{document}
\title{Dg analogues of the Zuckerman functors and the dual Zuckerman functors II}
\author{Takuma Hayashi}
\date{}
\maketitle
\begin{abstract}
We construct derived functors of dg analogues of the Zuckerman functors and the dual Zuckerman functors in view of the theory of model categories.
\end{abstract}
\section{Introduction}
This is the second part of the series of papers on dg analogues of the Zuckerman functors and the dual Zuckerman functors, following \cite{H1}. We have three subjects in this series: The first subject is to introduce and study fundamental properties of their categories of dg analogues of weak (Harish-Chandra) pairs, pairs, weak triples and triples over an arbitrary commutative ring, and modules over them, which was partly achieved in the previous part \cite{H1}; The second subject is to define dg analogues and their weak analogues $I^{\cB,L}_{\cA,K,w}$, $I^{\cB,L}_{\cA,K}$, $P^{\cB,L}_{\cA,K,w}$, and $P^{\cB,L}_{\cA,K}$ of induction functors, production functors, the Zuckerman functors, and the dual Zuckerman functors, which was done in \cite{H1}; The third subject is to define their derived functors.

The main theme of this paper is to establish the third subject. The first subject will be also discussed as a supplement at the beginning. This project was partly done by \cite{P1}, \cite{Ja1}, \cite{Ja2}, and \cite{Ja3} by means of homological algebra. Our approach here is based on the theory of model categories introduced by Daniel Quillen in \cite{Q1}. This will suit to the context of unbounded derived functors and (stable) $\infty$-categories (\cite{L2}). The main goal is to construct combinatorial model structures on the categories of modules over pairs and weak pairs, which are appropriate to the aim.
\subsection{Reviews on \cite{H1}}
In this paper we follow the same conventions as those of the former paper \cite{H1}. Here we shall begin with a brief summary on loc\ cit. Let $k$ be a (small) commutative ring, and let $k\cmod$ denote the category of cochain complexes of $k$-modules. For a flat affine group scheme $K$ over $k$, let $K\cmod$ denote the category of cochain complexes of $K$-modules. A weak pair is a pair $(\cA,K)$ of a flat affine group scheme $K$ and a monoid object $\cA$ of $K\cmod$. If one wants to emphasize the corresponding action map $\phi$ of $K$ on $\cA$, this weak pair is denoted by $(\cA,K,\phi)$, and it is called a weak tuple. For a weak pair $(\cA,K)$, we define the category $(\cA,K)\cmod_w$ of (left) weak $(\cA,K)$-modules as that of left modules over $\cA$ in the sense of the theory of monoidal categories. For a weak $(\cA,K)$-module $V$, the corresponding differential and actions of $\cA$ and $K$ will be dented by $d=d_V$, $\pi=\pi_V$, and $\nu=\nu_V$ respectively. Morphisms of weak pairs are also defined by a general context of monoidal categories. Namely, for weak pairs $(\cA,K)$ and $(\cB,L)$ a weak map from $(\cA,K)$ to $(\cB,L)$ is a pair $f=(f_a,f_k)$ consisting of a group scheme homomorphism $f_k:K\to L$ and a $K$-equivariant dg algebra homomorphism $f_a:\cA\to\cB$ via $f_k$. For a weak map $(\cA,K)\to(\cB,L)$, we have the ``forgetful functor''
\[\cF^{\cA,K}_{\cB,L,w}:(\cB,L)\cmod_w\to(\cA,K)\cmod_w.\]
\begin{lem}[\cite{H1} Proposition 2.3.6]\label{1.1.1}
For a weak map
\[(f_a,f_k):(\cA,K)\to(\cB,K)\]
with $f_k$ the identity map, the functor $\cF^{\cA,K}_{\cB,L,w}$ admits a left adjoint functor $P^{\cB,K}_{\cA,K,w}$.
\end{lem}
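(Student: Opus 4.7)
The plan is to construct the left adjoint as an extension-of-scalars functor
\[ P^{\cB,K}_{\cA,K,w}(V) \;:=\; \cB \otimes_\cA V, \]
computed inside the symmetric monoidal category $K\cmod$. Since $f_k = \mathrm{id}_K$, both $\cB$ (as a right $\cA$-module via $f_a$) and any weak $(\cA,K)$-module $V$ live in $K\cmod$ with a common $K$-action, so there is no issue of restricting along a group scheme morphism: I can simply form the reflexive coequalizer
\[ \cB \otimes_k \cA \otimes_k V \rightrightarrows \cB \otimes_k V \longrightarrow \cB \otimes_\cA V \]
in $K\cmod$, where the two parallel arrows are the right $\cA$-action on $\cB$ (through $f_a$) tensored with $V$, and $\cB$ tensored with $\pi_V$. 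Cocompleteness of $K\cmod$, established in \cite{H1}, guarantees existence; the diagonal $K$-action on $\cB \otimes_k V$ descends to the coequalizer precisely because the two parallel maps are $K$-equivariant, which uses that $f_a$ is a $K$-equivariant algebra map and $\pi_V$ is a map of $K$-modules.

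Next I would endow $\cB \otimes_\cA V$ with a left $\cB$-module structure in $K\cmod$ induced by left multiplication on the first factor. Associativity of the multiplication on $\cB$ and the fact that left and right multiplications on $\cB$ commute (as $\cB$ is an algebra) give the required compatibility with the coequalizer diagram, so the $\cB$-action factors through $\cB \otimes_\cA V$. The resulting object lies in $(\cB,K)\cmod_w$, and functoriality in $V$ is immediate from the universal property.

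For the adjunction I would exhibit the unit $\eta_V : V \to \cF^{\cA,K}_{\cB,K,w}(\cB \otimes_\cA V)$, $v \mapsto 1 \otimes v$, and the counit $\varepsilon_W : \cB \otimes_\cA \cF^{\cA,K}_{\cB,K,w}(W) \to W$ induced by the $\cB$-action $\pi_W$, which factors through the coequalizer by associativity of $\pi_W$ together with the definition of the restricted $\cA$-action on $W$. The triangle identities then reduce to the unit axiom for $\cB$ acting on itself and on $W$. The only genuinely delicate point is verifying that every map in sight is $K$-equivariant at the level of the uncompleted tensor products before passing to the coequalizer; this is where the hypothesis $f_k = \mathrm{id}_K$ is essential, since otherwise one would need to pull back $K$-actions along $f_k$ and the simple formulas above would need to be reinterpreted.

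I expect the main obstacle to be purely bookkeeping: checking that the internal differential on $\cB \otimes_\cA V$, inherited from $\cB \otimes_k V$, is well defined modulo the coequalizer relation and is compatible with both the $\cB$- and $K$-actions. All of these checks are formal consequences of the axioms collected in \cite{H1}, so once the construction is in place the verification should proceed without surprise.
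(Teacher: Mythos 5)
Your construction is correct and is essentially the standard one the cited reference uses: since $f_k=\mathrm{id}_K$ makes $f_a\colon\cA\to\cB$ a morphism of monoid objects in the cocomplete monoidal category $K\cmod$, the left adjoint to restriction of scalars is the relative tensor product $\cB\otimes_\cA-$, realized as the reflexive coequalizer you describe, and the unit/counit and triangle identities are the familiar ones for extension of scalars. Your remark on why $f_k=\mathrm{id}_K$ is essential is also on point; this is precisely why the paper needs extra hypotheses ($k=\bC$, $K,L$ reductive) to build $P^{\cB,L}_{\cA,K,w}$ for general $f_k$.
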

We next consider their ``non-weak'' analogues. In this case our affine group schemes are supposed to satisfy \cite{H1} Condition 2.2.2. This is a condition to define the adjoint representation Ad. A pair consists of a weak pair $(\cA,K)$ with $K$ satisfying \cite{H1} Condition 2.2.2 and a $K$-equivariant dg Lie algebra homomorphism $\psi:\fk\to\cA$ such that the following equality holds for any $\xi\in\fk$:
\[d\phi(\xi)=\left[\psi(\xi),-\right]:\cA\to\cA.\] 
Here $\phi$ denotes the corresponding action of $K$ on $\cA$. See \cite{H1} 2.1 for the rest of notations. If we want to emphasize the maps $\phi$ and $\psi$, this is denoted by $(\cA,K,\phi,\psi)$, and it is called a tuple. For a tuple $(\cA,K,\phi,\psi)$, a weak $(\cA,K)$-module is called an $(\cA,K)$-module if it satisfies the following equality:
\[d\phi(\xi)=\left[\psi(\xi),-\right]:\cA\to\cA.\]
The full subcategory of $(\cA,K)\cmod_w$ spanned by $(\cA,K)$-modules is denoted by $(\cA,K)\cmod$ in this paper.
\begin{lem}[\cite{H1} Lemma 2.3.2]\label{1.1.2}
For a pair $(\cA,K)$, the category $(\cA,K)\cmod$ is both a localization and a colocalization of $(\cA,K)\cmod_w$.
\end{lem}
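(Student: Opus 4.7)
The plan is to identify $(\cA,K)\cmod\subseteq(\cA,K)\cmod_w$ as the locus where a single natural transformation vanishes, and then construct both adjoints to the inclusion as an associated kernel and cokernel. Concretely, for each weak module $V$ I would consider the \emph{defect map}
\[\tau_V\colon\fk\otimes V\to V,\qquad \xi\otimes v\longmapsto d\nu_V(\xi)(v)-\pi_V(\psi(\xi)\otimes v),\]
where $\fk\otimes V$ carries the diagonal $K$-action (via $\Ad$ on $\fk$) and the $\cA$-action on the second factor only. The condition defining $(\cA,K)\cmod$ inside $(\cA,K)\cmod_w$ is precisely $\tau_V=0$, and $\tau$ is plainly natural in $V$.

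The main step, and the main obstacle, is to verify that $\tau_V$ is itself a morphism in $(\cA,K)\cmod_w$. The chain-map property follows because $\nu_V$ is a chain map and $\pi_V(\psi(\xi)\otimes-)$ satisfies the expected Leibniz identity with respect to $d_V$. For $\cA$-linearity, the weak compatibility of $\pi_V$ and $\nu_V$ yields $d\nu_V(\xi)(av)=d\phi(\xi)(a)\,v+a\,d\nu_V(\xi)(v)$; substituting the pair axiom $d\phi(\xi)=[\psi(\xi),-]$ and cancelling the terms $\psi(\xi)a\,v$ that appear, one obtains $\tau_V(\xi\otimes av)=a\,\tau_V(\xi\otimes v)$. $K$-equivariance comes from the $K$-equivariance of $\psi\colon\fk\to\cA$ combined with the standard conjugation formula relating $\nu_V(k)$ and $d\nu_V$. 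This step is precisely where the full strength of the pair axioms enters.

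Once $\tau$ is established as a natural transformation in $(\cA,K)\cmod_w$, the two adjoints are essentially formal. The coreflection is
\[V^{\natural}:=\{v\in V\mid \tau_V(\xi\otimes v)=0\ \text{for all}\ \xi\in\fk\},\]
which is stable under $d_V$, $\pi_V$, and $\nu_V$ thanks to the properties of $\tau_V$ just established; it lies in $(\cA,K)\cmod$, and any weak morphism from an object of $(\cA,K)\cmod$ into $V$ lands in $V^{\natural}$ by naturality of $\tau$, which yields the right adjoint. Dually, the reflection is $V/W$, where $W\subseteq V$ is the weak sub-$(\cA,K)$-module generated by $\Image\tau_V$; any weak morphism from $V$ to an object of $(\cA,K)\cmod$ annihilates $\Image\tau_V$ and hence factors uniquely through $V/W$, giving the left adjoint.
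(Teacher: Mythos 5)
Your argument is correct and is the natural one: identify $(\cA,K)\cmod$ inside $(\cA,K)\cmod_w$ as the vanishing locus of the defect transformation $\tau$, check that $\tau_V$ is itself a morphism in $(\cA,K)\cmod_w$ (which is precisely where the pair axiom $d\phi(\xi)=[\psi(\xi),-]$ is used), and then take kernel and cokernel to produce the coreflector and the reflector. Note, however, that the present paper only cites this statement from \cite{H1} and does not reproduce a proof, so there is no in-paper argument to compare against. Two small cleanups to your write-up: once $\tau_V$ is known to be a morphism of weak $(\cA,K)$-modules, $\Image\tau_V$ is automatically a sub weak $(\cA,K)$-module, so there is no need to pass to the submodule it generates --- you may take $W=\Image\tau_V$ directly, and the computation $\tau_{V/W}(\xi\otimes\bar v)=\overline{\tau_V(\xi\otimes v)}=0$ shows $V/W$ is strict; and since $\fk=\Lie K$ is concentrated in degree $0$ with zero differential, all Koszul signs in the Leibniz and chain-map verifications are trivial, which is why the $\cA$-linearity and chain-map checks close up without sign bookkeeping.
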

We next define maps of pairs. Let $(\cA,K,\phi_\cA,\psi_\cA)$
and $(\cB,L,\phi_\cB,\psi_\cB)$
be tuples. Then a map $(\cA,K)\to(\cB,L)$ of pairs is a weak map $f=(f_a,f_k)$ respecting $\psi$, i.e., satisfying the equality
\[f_a\circ\psi_\cA=\psi_\cB\circ df_k:\fk\to\cB.\]
By easy computations, we showed the following results:
\begin{lem}[\cite{H1} Proposition 2.3.1, Proposition 2.3.6]\label{1.1.3}
\begin{enumerate}
\renewcommand{\labelenumi}{(\arabic{enumi})}
\item Let
\[(\cA,K)\to(\cB,L)\]
be a map of pairs. Then the restriction of $\cF^{\cA,K}_{\cB,L,w}$ gives rise to the ``forgetful'' functor
\[\cF^{\cA,K}_{\cB,L}:(\cB,L)\cmod\to(\cA,K)\cmod.\]
\item For a map $(f_a,f_k):(\cA,K)\to(\cB,K)$ of pairs with $f_k$ the identity map, the restriction of $P^{\cB,K}_{\cA,K,w}$ gives rise to a functor
\[P_{\cA,K}^{\cB,K}:(\cA,K)\cmod\to(\cB,K)\cmod.\]
Moreover, $P_{\cA,K}^{\cB,K}$ is a left adjoint functor of $\cF^{\cA,K}_{\cB,K}$.
\end{enumerate}
\end{lem}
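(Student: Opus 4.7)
The plan is to handle (1) as a direct unpacking of the module condition, then show by an explicit computation that $P^{\cB,K}_{\cA,K,w}$ sends $(\cA,K)$-modules to $(\cB,K)$-modules, and finally obtain the adjunction for free by restricting the weak-level adjunction of Lemma \ref{1.1.1} along the fully faithful embeddings supplied by Lemma \ref{1.1.2}.

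For (1), I would fix a $(\cB,L)$-module $W$ and view it as a weak $(\cA,K)$-module, whose $\cA$-action is $\pi_W\circ f_a$ and whose $K$-action is $\nu_W\circ f_k$. To verify the $(\cA,K)$-module condition at $\xi\in\fk$, expand $d(\nu_W\circ f_k)(\xi)=d\nu_W(df_k(\xi))$, use the $(\cB,L)$-module condition on $W$ to rewrite this as the action of $\psi_\cB(df_k(\xi))$ via $\pi_W$, and apply the map-of-pairs identity $f_a\circ\psi_\cA=\psi_\cB\circ df_k$ to recognise it as $\pi_W(f_a(\psi_\cA(\xi)))$. That is exactly what the $(\cA,K)$-module condition on $W$ demands, so $\cF^{\cA,K}_{\cB,L,w}$ restricts as claimed.

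For the preservation claim in (2), the assumption $f_k=\mathrm{id}_K$ reduces the compatibility to $f_a\circ\psi_\cA=\psi_\cB$. I would use the explicit description $P^{\cB,K}_{\cA,K,w}(V)=\cB\otimes_\cA V$ with left multiplication by $\cB$ and diagonal $K$-action, and check the $(\cB,K)$-module condition on a pure tensor $b\otimes v$. Expanding $d\nu(\xi)(b\otimes v)$ by Leibniz yields $d\phi_\cB(\xi)(b)\otimes v+b\otimes d\nu_V(\xi)(v)$; the first summand equals $[\psi_\cB(\xi),b]\otimes v$ by the pair condition on $(\cB,K)$, and the second equals $b\otimes\pi_V(\psi_\cA(\xi))(v)$ by the module condition on $V$. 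Sliding $f_a(\psi_\cA(\xi))=\psi_\cB(\xi)$ across the $\cA$-balanced tensor identifies $b\psi_\cB(\xi)\otimes v$ with $b\otimes\pi_V(\psi_\cA(\xi))(v)$, so the extra terms cancel and we are left with $\psi_\cB(\xi)b\otimes v=\pi_{P_w V}(\psi_\cB(\xi))(b\otimes v)$.

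With both $\cF^{\cA,K}_{\cB,K,w}$ and $P^{\cB,K}_{\cA,K,w}$ now known to preserve the full subcategories of honest modules cut out by the pair condition, the Hom-set bijection of Lemma \ref{1.1.1} restricts tautologically along the fully faithful inclusions from Lemma \ref{1.1.2}, giving the adjunction $P^{\cB,K}_{\cA,K}\dashv\cF^{\cA,K}_{\cB,K}$. The only non-formal step is the bookkeeping in (2): one has to juggle the pair condition on $\cB$, the module condition on $V$, and the $\cA$-balancing of the tensor product in tandem to see the cancellation, and I expect that to be the main (and essentially only) obstacle in the argument.
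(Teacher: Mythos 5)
This paper does not reprove Lemma~\ref{1.1.3}; it is recalled from \cite{H1} with the phrase ``by easy computations,'' so there is no in-paper argument to compare against. Your verification is correct and is the standard one: expanding $d\nu$ and invoking $f_a\circ\psi_\cA=\psi_\cB\circ df_k$ gives (1); the Leibniz expansion on $\cB\otimes_\cA V$, the pair condition on $\cB$, the module condition on $V$, and the $\cA$-balancing relation $bf_a(\psi_\cA(\xi))\otimes v=b\otimes\psi_\cA(\xi)v$ together cancel the cross terms in (2); and since both $P^{\cB,K}_{\cA,K,w}$ and $\cF^{\cA,K}_{\cB,K,w}$ land in the full subcategories of honest modules, the weak-level Hom bijection of Lemma~\ref{1.1.1} restricts to the claimed adjunction.
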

We also constructed functors $P^{\cB,L}_{\cA,K,w}:(\cA,K)\cmod_w\to(\cB,L)\cmod_w$ and $P^{\cB,L}_{\cA,K}:(\cA,K)\cmod\to(\cB,L)\cmod$ in the cases where the base ring $k$ is the complex number field and the groups $K,L$ are reductive in \cite{H1}.
\begin{prop}[\cite{H1} 2.5]\label{1.1.4}
The functors $P^{\cB,L}_{\cA,K,w}$ and $P^{\cB,L}_{\cA,K}$ admit exact right adjoint functors.
\end{prop}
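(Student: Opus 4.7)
The plan is to identify the right adjoint of $P^{\cB,L}_{\cA,K,w}$ explicitly with the forgetful functor $\cF^{\cA,K}_{\cB,L,w}:(\cB,L)\cmod_w\to(\cA,K)\cmod_w$, thereby generalizing Lemma~\ref{1.1.1} from the same-group case $f_k=\mathrm{id}$ to the full reductive setting over $\bC$. Once this adjunction is in place, the exactness of the right adjoint is automatic: $\cF^{\cA,K}_{\cB,L,w}$ is essentially the identity on underlying cochain complexes, only forgetting the factorizations of the actions through $f_a$ and $f_k$.

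The central step is to exhibit the natural isomorphism
\[
\Hom_{(\cB,L)\cmod_w}\!\bigl(P^{\cB,L}_{\cA,K,w}M,\,N\bigr)\;\cong\;\Hom_{(\cA,K)\cmod_w}\!\bigl(M,\,\cF^{\cA,K}_{\cB,L,w}N\bigr).
\]
I would construct the unit and counit of this adjunction by tracing through the explicit definition of $P^{\cB,L}_{\cA,K,w}$ given in \cite{H1} 2.5, and then verify the triangle identities. Reductivity of $K$ and $L$ enters through the isotypic decompositions used in the construction; the canonical $K$-equivariant projections onto isotypic components supply the structural maps required for the adjunction.

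For the non-weak version $P^{\cB,L}_{\cA,K}$, I would invoke Lemma~\ref{1.1.2}: since $(\cA,K)\cmod$ is both a localization and a colocalization of $(\cA,K)\cmod_w$, and by Lemma~\ref{1.1.3}(1) the forgetful functor restricts to $\cF^{\cA,K}_{\cB,L}$, a formal diagram chase transfers the adjunction and the exactness from the weak to the non-weak setting.

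The main obstacle is the verification of the adjunction when $f_k\ne\mathrm{id}$. Unlike Lemma~\ref{1.1.1}, where the same-group setting admits a direct tensor-product description, here the construction of $P^{\cB,L}_{\cA,K,w}$ interlaces an isotypic decomposition under $K$ with the $\cB$-action and the change of group to $L$. Matching these pieces carefully against the universal property of the sought adjoint is where the bulk of the technical work will lie.
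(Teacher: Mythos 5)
Your plan hinges on identifying the right adjoint of $P^{\cB,L}_{\cA,K,w}$ with the forgetful functor $\cF^{\cA,K}_{\cB,L,w}$, i.e.\ on $P^{\cB,L}_{\cA,K,w}$ being the left adjoint of $\cF^{\cA,K}_{\cB,L,w}$. This cannot hold once $f_k:K\to L$ is a proper inclusion, because then $\cF^{\cA,K}_{\cB,L,w}$ has no left adjoint at all. Although colimits and finite limits of weak modules are computed in $k\cmod$, infinite products are not: the product in $(\cB,L)\cmod_w$ of a family $\{V_i\}$ is the $L$-locally finite part of the vector-space product, and passage to locally finite vectors does not commute with restriction along $f_k$. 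Concretely, take $K=\{1\}$, $L=\mathbb{G}_m$, and $\cA=\cB=k=\bC$. The product $\prod_{n\in\bZ}\bC(n)$ computed in $L\cmod$ equals $\bigoplus_{n\in\bZ}\bC(n)$, a countable-dimensional space, whereas $\prod_{n}\cF(\bC(n))=\bC^{\bZ}$ is uncountable-dimensional; thus $\cF$ fails to preserve this product, so it is not a right adjoint of anything, and Lemma~\ref{1.1.1} does not extend in the way you propose.

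What \cite{H1}~2.5 actually does is construct, for $k=\bC$ and $K,L$ reductive, the functor $P^{\cB,L}_{\cA,K,w}$ \emph{together with} a separate right adjoint built out of the regular representation $R(L)$ and the isotypic decompositions available by semisimplicity; the exactness of that right adjoint is a genuine consequence of semisimplicity of $K\cmod$ and $L\cmod$, not the trivial exactness of a forgetful functor. Your proposed reduction of the non-weak statement to the weak one via Lemma~\ref{1.1.2} is sound in outline, but it cannot repair the misidentification of the adjoint pair in the weak case, which is where the proof has to start.
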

\subsection{Backgrounds}
Before stating the main results let us begin by introducing their prototypes in classical homological algebra. Let $f:A\to B$ be a homomorphism of rings, and let $A\cmod$ (resp.\ $B\cmod$) denote the category of cochain complexes of $A$-modules (resp.\ $B$-modules). It is well-known that the restriction functor $f^\ast:B\cmod\to A\cmod$ admits both the left adjoint functor $B\otimes_A-$ and the right adjoint functor $\Hom_A(B,-)$. An approach to define its (unbounded) derived functors is to construct the so-called injective and projective model structures.
\begin{thm}[\cite{Hov1}]\label{1.2.1}
The category $A\cmod$ admits a proper combinatorial model structures satisfying the following conditions:
\begin{enumerate}
\item[(C)]A map is a cofibration if and only if it is a monomorphism.
\item[(W)]A map is a weak equivalence if and only if it is a quasi-isomorphism.
\end{enumerate}
This is called the injective model structure. Moreover, the functor $f^\ast$ is left Quillen with respect to this model structure. In particular, the unbounded derived functor $\bR\Hom_A(B,-)$ exists.
\end{thm}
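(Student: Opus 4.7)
The plan is to apply Smith's recognition theorem for combinatorial model structures on a locally presentable category. The category $A\cmod$ is a Grothendieck abelian category, hence locally presentable. Fix a regular cardinal $\kappa>\aleph_0+|A|$ and let $I$ be a set of representatives for the isomorphism classes of monomorphisms between $\kappa$-presentable cochain complexes of $A$-modules; let $J\subset I$ be the subset of those maps which are also quasi-isomorphisms. I would declare the cofibrations to be all monomorphisms, the weak equivalences to be all quasi-isomorphisms, and the fibrations to be the maps with the right lifting property against $J$.

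To apply Smith's recognition theorem I would verify: (i) the class $\cW$ of quasi-isomorphisms satisfies two-out-of-three and is closed under retracts; (ii) $\cW$ is accessible and accessibly embedded in the arrow category, which follows because quasi-isomorphisms are detected by the accessible cohomology functors $H^\ast$; (iii) maps with the right lifting property against $I$ lie in $\cW$; and (iv) $\mathrm{cof}(J)\subseteq\cW\cap\mathrm{cof}(I)$. Once these are in place, the class $\mathrm{cof}(I)$ automatically coincides with the class of all monomorphisms, by the lemma that in any Grothendieck abelian category every monomorphism is a transfinite composition of pushouts of $\kappa$-presentable monomorphisms.

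The main obstacle is condition (iv): showing that every trivial monomorphism is built from $J$ by the saturation operations. This is a Beke-style filtration argument. For a trivial monomorphism $X\hookrightarrow Y$ one must produce a continuous transfinite filtration by trivial submonomorphisms whose successive quotients are $\kappa$-presentable. The argument uses the Grothendieck axiom AB5 together with a careful choice of $\kappa$ ensuring that any quasi-isomorphism admits ``enough'' $\kappa$-presentable trivial subinclusions; this cardinality bookkeeping is the technical heart of the proof.

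Left properness is automatic since every object is cofibrant (the initial map $0\to X$ is always a monomorphism), and right properness follows from the observation that pullback along a fibration preserves quasi-isomorphisms, via the five lemma applied to the long exact sequence of the common kernel. Finally, $f^\ast\colon B\cmod\to A\cmod$ admits the right adjoint $\Hom_A(B,-)$ by standard adjunction, and clearly preserves monomorphisms and quasi-isomorphisms since it only forgets structure; hence $f^\ast$ preserves cofibrations and trivial cofibrations and is left Quillen, so the derived functor $\bR\Hom_A(B,-)$ exists as its total right derived functor.
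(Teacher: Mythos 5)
Theorem \ref{1.2.1} is stated in the paper as background and cited to Hovey; the text gives no proof of it. The closest in-paper analogue is the proof of Theorem \ref{2.3.1}, which proceeds via Lurie's version of Smith's recognition theorem (\cite{L1} Corollary A.2.6.13, reproduced as Theorem \ref{2.3.4}) rather than the Hovey/Kan formulation you appeal to. The difference is more than cosmetic: with the Lurie/Beke version one does \emph{not} construct the set $J$ of generating trivial cofibrations by hand. One only shows that (a) the monomorphisms form a saturated class generated by a set, (b) the class $W$ of quasi-isomorphisms is perfect (in particular stable under filtered colimits and generated by a small subset), and (c) every map with the right lifting property against the generating monomorphisms is a quasi-isomorphism. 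The transfinite cardinality bookkeeping you flag as ``the technical heart'' of your condition (iv)---producing a continuous filtration of an arbitrary trivial monomorphism by $\kappa$-presentable trivial submonomorphisms---is absorbed once and for all into the proof of the abstract recognition theorem; that is precisely why the paper routes through Lurie/Beke instead of Hovey's Theorem 2.1.19. Both roads lead to the same model structure, but you should be aware you are signing up to re-prove Beke's filtration lemma rather than quote it.

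There are two local issues. First, you assert condition (iii) (maps with RLP against $I$ lie in $W$) without argument; the paper's proof of Theorem \ref{2.3.1} supplies one, and you need it: such a map $p$ has RLP against \emph{all} monomorphisms, hence admits a section (lift against $0\to Y$), hence is a split surjection with injective kernel, and injective objects are acyclic because $I\hookrightarrow\Cone(id_I)$ admits a retraction while $\Cone(id_I)$ is contractible. Second, your (iv) as written, $\mathrm{cof}(J)\subseteq W\cap\mathrm{cof}(I)$, is the \emph{easy} inclusion; what you then correctly describe as needing a Beke-style filtration is the hard converse $W\cap\mathrm{cof}(I)\subseteq\mathrm{cof}(J)$, which is Hovey's condition (iv) in Theorem \ref{2.5.1}---the labels should be fixed. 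Your argument for right properness (fibrations are surjective, the pullback has the same kernel, apply the five lemma to the two long exact sequences) is correct and more self-contained than the paper's citation to \cite{Hov2} Corollary 1.4, and the verification that $f^\ast$ is left Quillen is fine since restriction of scalars preserves monomorphisms and quasi-isomorphisms.
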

More generally, such a result is valid for the category $C(\cA)$ of cochain complexes of objects of a Grothendieck abelian category $\cA$.
\begin{thm}[\cite{Be}, \cite{Hov2}, \cite{L2}]\label{1.2.2}
The category $C(\cA)$ admits a proper combinatorial model structures satisfying the following conditions:
\begin{enumerate}
\item[(C)]A map is a cofibration if and only if it is a monomorphism.
\item[(W)]A map is a weak equivalence if and only if it is a quasi-isomorphism.
\end{enumerate}
\end{thm}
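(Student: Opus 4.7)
The plan is to apply Jeff Smith's recognition theorem for combinatorial model structures (compare \cite{Be}, or \cite{L2} A.2.6.13). Since $\cA$ is Grothendieck abelian, the category $C(\cA)$ is also Grothendieck abelian, hence locally presentable. Fix a regular cardinal $\kappa$ such that $C(\cA)$ is locally $\kappa$-presentable, and let $I$ be a set of representatives for all monomorphisms between $\kappa$-presentable objects of $C(\cA)$. Let $W$ denote the class of quasi-isomorphisms; it manifestly satisfies the two-out-of-three property and is closed under retracts, and because homology commutes with filtered colimits in a Grothendieck abelian category, $W$ is accessible and accessibly embedded in the arrow category.

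Next I would verify the remaining hypotheses of Smith's theorem. First, $\mathrm{cof}(I)$ coincides with the class of all monomorphisms in $C(\cA)$: the inclusion $\mathrm{cof}(I)\subseteq\Mono$ is formal, and the reverse inclusion follows from a standard small-object/presentability argument using that any monomorphism is a $\kappa$-filtered colimit of monomorphisms between $\kappa$-presentable subobjects. Second, $\mathrm{inj}(I)\subseteq W$: a map with the right lifting property against all monomorphisms must be a degreewise split epimorphism whose kernel is a contractible injective complex, hence a quasi-isomorphism. Third, the intersection $\mathrm{cof}(I)\cap W$ is closed under pushouts and transfinite compositions: pushouts of monomorphisms are monomorphisms in any abelian category, and the snake lemma applied to the associated exact sequence of cokernels shows that the pushout of a quasi-isomorphism along a monomorphism is a quasi-isomorphism; closure under transfinite composition follows similarly, using exactness of filtered colimits in the Grothendieck abelian category $C(\cA)$.

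Smith's theorem then produces the desired combinatorial model structure with the prescribed cofibrations and weak equivalences. Left properness is immediate from the pushout argument in step (iii) above. For right properness one needs a characterization of fibrations sufficient to show that pullbacks along them preserve quasi-isomorphisms; the standard way is to show that every fibration is, degreewise, a split epimorphism whose kernel has vanishing higher $\Ext$'s against the relevant complexes, so that the long exact sequence and the five lemma yield the conclusion. I expect the main obstacle to be this verification of right properness, together with ensuring that the set $I$ really generates all monomorphisms under $\mathrm{cof}$; both are handled in \cite{Be}, \cite{Hov2}, and \cite{L2} A.3.3, which I would cite rather than redo the full combinatorial bookkeeping.
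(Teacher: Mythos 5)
Your proposal follows the same basic route the paper itself takes for the analogous Theorem \ref{2.3.1}: invoke Smith's recognition theorem (the paper uses the formulation in Theorem \ref{2.3.4}, i.e.\ \cite{L1} Corollary A.2.6.13), establish local presentability of $C(\cA)$, verify that quasi-isomorphisms form a perfect/accessible class via the cohomology functors commuting with filtered colimits, produce a small generating set $I$ for the monomorphisms, and show that maps with the right lifting property against $I$ are surjective with acyclic injective kernel, hence quasi-isomorphisms. That all matches. Two small imprecisions worth flagging. First, in your step (iii) you write that ``the pushout of a quasi-isomorphism along a monomorphism is a quasi-isomorphism''; this is the left-properness statement, not the Smith condition. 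What Smith's theorem actually asks is that pushing out an \emph{acyclic monomorphism} $i$ along an \emph{arbitrary} map $f$ yields an acyclic monomorphism, which follows because the pushout is again a monomorphism with cokernel isomorphic to $\operatorname{coker}(i)$, which is acyclic. Both statements are true and both are needed, but they are distinct and you should not conflate them. Second, your right-properness sketch in terms of ``vanishing higher $\Ext$'s'' of the kernel is more elaborate than necessary. The cleaner and standard argument, which is essentially what the paper cites from \cite{Hov2} Corollary 1.4, is: every fibration has the right lifting property against the acyclic monomorphism $0 \to \Cone(id_Y)\left[-1\right]$ and against $Y \hookrightarrow \Cone(id_Y)\left[-1\right]$, whence every fibration is surjective; pulling back a quasi-isomorphism along a surjection gives a morphism of short exact sequences that is an isomorphism on kernels, so the five lemma applied to the associated long exact sequences finishes the job. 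With those corrections, your argument is sound and equivalent in strategy to the one the paper would use.
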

As a variant, it is also known that for a differential graded ring $A$ the category of $A\cmod$ admits a similar model structure (see \cite{A} for example).

We next get into the projective model structure.
\begin{thm}[\cite{Hov1}]\label{1.2.3}
Let $A$ be a ring. Then the category $A\cmod$ admits a proper combinatorial model structures satisfying the following conditions:
\begin{enumerate}
\item[(F)]A map is a fibration if and only if it is an epimorphism.
\item[(W)]A map is a weak equivalence if and only if it is a quasi-isomorphism.
\end{enumerate}
This is called the projective model structure. Moreover, if $A$ is commutative it is a symmetric monoidal model category satisfying the monoid axiom in the sense of \cite{SS} for $\otimes_A$.
\end{thm}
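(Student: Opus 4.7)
The plan is to apply the cofibrantly generated model structure recognition theorem (Hovey, Hirschhorn) with explicit generating sets. For $n \in \bZ$, let $S^n$ denote the complex $A$ concentrated in cohomological degree $n$ and let $D^n$ denote the contractible complex $A \xrightarrow{\mathrm{id}} A$ placed in degrees $n$ and $n+1$. Take
\[
I = \{\, S^{n+1} \hookrightarrow D^n : n \in \bZ \,\}, \qquad J = \{\, 0 \to D^n : n \in \bZ \,\},
\]
as generating cofibrations and generating trivial cofibrations, and declare weak equivalences to be quasi-isomorphisms. A direct lifting argument identifies the class having right lifting property against $J$ with the degreewise surjections, matching the prescribed fibrations (F), and identifies the $I$-injectives with surjective quasi-isomorphisms.

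To apply the recognition theorem, the domains and codomains of $I$ and $J$ must be small, which is immediate since they are finitely presentable objects of $A\cmod$. The key technical step, which I expect to be the main obstacle, is to verify that every relative $J$-cell complex is simultaneously an $I$-cofibration and a quasi-isomorphism. For the weak equivalence part, contractibility of each $D^n$ together with the long exact sequence in cohomology shows that pushouts and transfinite composites of maps in $J$ remain quasi-isomorphisms. For the cofibration part, one factors $0 \to D^n$ as $0 \to S^{n+1} \to D^n$, where $0 \to S^{n+1}$ arises as the pushout of $S^{n+2} \hookrightarrow D^{n+1}$ along $S^{n+2} \to 0$, placing both arrows in $\mathrm{cof}(I)$. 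Combinatoriality is then automatic: $A\cmod$ is locally presentable and $I$, $J$ are sets.

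Properness is verified directly from the explicit description of the classes: right properness reduces to the long exact sequence in cohomology attached to a pullback square whose right leg is surjective, while left properness follows from the fact that $I$-cofibrations are degreewise split monomorphisms, so that mapping cones are preserved under such pushouts. When $A$ is commutative, the pushout-product axiom reduces by standard closure arguments to a check on generators, amounting to the computations $S^n \otimes_A S^m = S^{n+m}$ and explicit identifications of $D^n \otimes_A D^m$ and $S^n \otimes_A D^m$ in terms of copies of $D^\ast$ and $S^\ast$ that lie in $\mathrm{cof}(I)$ or $\mathrm{cof}(J) \cap W$ as appropriate. Finally, the monoid axiom holds because tensoring any complex $M$ over $A$ with a map $0 \to D^n$ yields $0 \to M \otimes_A D^n$ with contractible target, and a cellular induction together with closure of quasi-isomorphisms under filtered colimits and pushouts of degreewise split monomorphisms propagates the property to all transfinite composites of pushouts of maps of the form $M \otimes_A j$ for $j \in J$.
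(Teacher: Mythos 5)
The paper states this theorem as a citation to \cite{Hov1} and \cite{SS} and gives no proof of its own, so there is no internal argument to compare against. Your reconstruction is essentially the standard one from Hovey's book: the recognition theorem applied to the sphere/disk generating sets $I=\{S^{n+1}\hookrightarrow D^n\}$ and $J=\{0\to D^n\}$, identification of $J$-injectives with epimorphisms and $I$-injectives with surjective quasi-isomorphisms, smallness from finite presentability, and the factorization $0\to S^{n+1}\to D^n$ to place $J$ inside $\mathrm{cof}(I)$; the monoidal part reduces to the generators and the monoid axiom is verified as in Schwede--Shipley. The sketch is correct. Two places deserve slightly more care than ``a direct lifting argument'' suggests: first, surjectivity of an $I$-injective map $p$ is not immediate --- one first shows $p$ is surjective on cycles (taking $x=0$ in the lifting square) and then bootstraps to full surjectivity by lifting $dy$ to a cycle in the source --- and second, for left properness the relevant observation is that a pushout of a degreewise split monomorphism along a quasi-isomorphism yields a map of short exact sequences with isomorphic cokernels, so the five lemma applies; your phrasing about ``mapping cones being preserved'' gestures at this but should be made precise along those lines.
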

\begin{cor}\label{1.2.4}
If we are given a homomorphism $f:A\to B$ of rings the functor $f^\ast$ is right Quillen with respect to this model structure. In particular, the unbounded derived functor $B\otimes^\bL_A-$ exists.
\end{cor}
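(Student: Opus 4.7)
The plan is to verify directly that $f^\ast:B\cmod\to A\cmod$ preserves the two classes of maps that characterize right Quillen functors, namely fibrations and acyclic fibrations, and then invoke the standard machinery of Quillen adjunctions to obtain the derived functor.

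First I would recall that by Theorem \ref{1.2.3} the projective model structures on both $A\cmod$ and $B\cmod$ share the same descriptions: the fibrations are the surjections and the weak equivalences are the quasi-isomorphisms. The key observation is that $f^\ast$ is defined by forgetting the $B$-action and remembering only the underlying cochain complex of abelian groups together with the $A$-action obtained by restriction of scalars along $f$; in particular the underlying cochain complex of $k$-modules (or of abelian groups) is unchanged. Consequently, if $g:M\to N$ is a surjection in $B\cmod$, then $f^\ast(g)$ has the same underlying map, so it is a surjection in $A\cmod$; and if $g$ is moreover a quasi-isomorphism, then $f^\ast(g)$ has the same cohomology maps and so is also a quasi-isomorphism. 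This shows $f^\ast$ preserves fibrations and acyclic fibrations.

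Next I would identify the left adjoint to $f^\ast$ as the extension-of-scalars functor $B\otimes_A-$, which is the standard adjunction coming from the monoidal structure on the categories of complexes. Since $f^\ast$ is a right Quillen functor between combinatorial model categories, the Quillen adjunction $(B\otimes_A-)\dashv f^\ast$ admits a total derived adjunction $(B\otimes^\bL_A-)\dashv\bR f^\ast$; the combinatoriality (ensuring functorial cofibrant and fibrant replacements) together with the fact that all objects are fibrant in the projective model structure lets us identify $\bR f^\ast$ with $f^\ast$ itself and $B\otimes_A^\bL -$ as the left-derived functor computed by cofibrant replacement.

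There is essentially no obstacle in this argument; the only thing to verify is the compatibility of the two model structures with the restriction functor, and this is immediate from the fact that both fibrations and weak equivalences in the projective model structure depend only on the underlying complex of abelian groups. The substantive work was already carried out in Theorem \ref{1.2.3}.
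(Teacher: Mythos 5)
Your argument is correct and is precisely the standard reasoning that the paper leaves implicit: the paper states Corollary~\ref{1.2.4} without proof as an immediate consequence of Theorem~\ref{1.2.3}, since fibrations and weak equivalences in the projective model structure are detected on underlying complexes and $f^\ast$ leaves these unchanged. Your write-up simply spells out those details together with the identification of the left adjoint as $B\otimes_A-$, which is exactly what the paper intends.
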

To generalize it to dg rings we can use the transfer method:
\begin{thm}[transfer of a model structure, \cite{GS} Theorem 3.6]\label{1.2.5}
Let $\cC$ be a combinatorial model category, $\cD$ a locally presentable category, and let $(F,U):\cC\to\cD$ be an adjunction. Suppose that if a map $f$ of $\cD$ has the left lifting property with respect to any map $p$ of $\cD$ with $U(p)$ a fibration, then $U(f)$ is a weak equivalence. Then $\cD$ admits a model structure satisfying the following conditions:
\begin{enumerate}
\item[(F)]A map $p$ of $\cD$ is a fibration if and only if $U(p)$ is a fibration.
\item[(W)]A map $w$ of $\cD$ is a weak equivalence if and only if $U(w)$ is a weak equivalence.
\end{enumerate}
\end{thm}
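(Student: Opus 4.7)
The plan is to apply the standard recognition theorem for cofibrantly generated model categories (Kan's criterion). I would fix sets $I_\cC$ and $J_\cC$ of generating cofibrations and generating trivial cofibrations of $\cC$, define $I_\cD:=F(I_\cC)$ and $J_\cD:=F(J_\cC)$, and take the putative class of weak equivalences of $\cD$ to be $\cW_\cD := U^{-1}(\cW_\cC)$. The goal is to verify that these data satisfy Kan's hypotheses and hence produce a cofibrantly generated model structure whose fibrations and weak equivalences are exactly the ones specified in (F) and (W).

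First I would use the $(F,U)$-adjunction to identify $I_\cD$-inj with the class of maps $p$ such that $U(p)$ is a trivial fibration of $\cC$, and $J_\cD$-inj with the class of maps $p$ such that $U(p)$ is a fibration of $\cC$. Since $\cW_\cD$ automatically inherits the two-out-of-three property and closure under retracts from $\cW_\cC$, this identification already yields the formal equality $I_\cD\text{-inj} = \cW_\cD \cap J_\cD\text{-inj}$. Because $\cD$ is locally presentable, every object is small, so the small object argument applies to both $I_\cD$ and $J_\cD$ and supplies the two functorial factorizations required by Kan's recognition theorem.

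The main content, and the only place where the hypothesis intervenes, is the verification that every relative $J_\cD$-cell complex lies in $\cW_\cD \cap I_\cD\text{-cof}$. The inclusion in $I_\cD\text{-cof}$ is formal: from $J_\cD\text{-inj} \supseteq I_\cD\text{-inj}$ it follows by passage to the left lifting class that $J_\cD\text{-cof} \subseteq I_\cD\text{-cof}$. For membership in $\cW_\cD$, a relative $J_\cD$-cell complex is in particular a $J_\cD$-cofibration and therefore has the left lifting property with respect to every map $p$ of $\cD$ with $U(p)$ a fibration; the assumed implication then forces $U$ applied to it to be a weak equivalence of $\cC$, as needed.

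With these inputs in hand, the recognition theorem produces the model structure, and combinatoriality is automatic because $\cD$ is locally presentable and the generating sets $I_\cD,J_\cD$ are small. The expected obstacle is the weak-equivalence step just described — in practice this is where one would invoke a path-object or acyclicity argument in $\cD$ — but the hypothesis in the statement is tailor-made to replace any such construction, so nothing further needs to be carried out inside $\cD$.
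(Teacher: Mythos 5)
Your argument is correct and essentially reproduces the paper's own proof of this result, which appears later in the body as Corollary \ref{2.5.2}: both set $I_\cD=F(I_\cC)$, $J_\cD=F(J_\cC)$, $\cW_\cD=U^{-1}(\cW_\cC)$, use the adjunction to identify ${}_\bot F(I)$ and ${}_\bot F(J)$ as the maps whose image under $U$ is a trivial fibration, resp.\ a fibration, and then verify a recognition theorem (your ``Kan's criterion'' is the same statement as the paper's Theorem \ref{2.5.1}, Hovey 2.1.19), with the stated hypothesis supplying exactly the acyclicity condition $J_\cD\text{-cof}\subset\cW_\cD$. The one place you are more explicit than the paper is the formal inclusion $J_\cD\text{-cof}\subset I_\cD\text{-cof}$, which the paper folds into ``the conditions (iii) and (iv) are now obvious.''
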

A typical application is to transfer a symmetric monoidal model structure to the category of modules over a monoid of the given category.
\begin{thm}[\cite{SS} Theorem 4.1, \cite{L2} Proposition 4.1.4.3]\label{1.2.6}
Let $\cV$ be a combinatorial symmetric monoidal model category satisfying the monoid axiom, and $A$ be a monoid in $\cV$. Then the category of $A$-modules in $\cV$ inherits the structure of a combinatorial model structure satisfying the following conditions:
\begin{enumerate}
\item[(F)]A morphism of $A$-modules is a fibration if and only if it is a fibration as a morphism of $\cV$.
\item[(W)]A map of $A$-modules is a weak equivalence if and only if it is a weak equivalence as a morphism of $\cV$.
\end{enumerate}
\end{thm}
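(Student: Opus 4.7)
The plan is to apply the transfer theorem (Theorem~\ref{1.2.5}) to the free-forgetful adjunction $(A\otimes-,U)\colon\cV\to\cV_A$, where $\cV_A$ denotes the category of left $A$-modules in $\cV$ and $U$ is the forgetful functor. Since $\cV$ is combinatorial, hence locally presentable, and the standard convention for monoidal model categories assumes closedness, the functor $A\otimes-$ preserves small colimits; thus $U$ creates all small colimits and $\cV_A$ is locally presentable, as the category of algebras for an accessible monad on a locally presentable category.

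Next I would verify the lifting hypothesis of Theorem~\ref{1.2.5}. Fix generating sets $I$ and $J$ of cofibrations and acyclic cofibrations of $\cV$, and suppose $f$ in $\cV_A$ has the left lifting property against every morphism $p$ with $U(p)$ a fibration. Running the small object argument in $\cV_A$ on $\{A\otimes j:j\in J\}$ factors $f$ as $q\circ i$, where $i$ is a relative $\{A\otimes j\}$-cell complex and $q$ has the right lifting property against each $A\otimes j$. By the adjunction, $U(q)$ has the right lifting property against $J$, so $U(q)$ is a fibration, and the standard retract argument exhibits $f$ as a retract of $i$.

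It therefore suffices to show that $U(i)$ is a weak equivalence in $\cV$. Because $U$ preserves colimits and $A\otimes-$ preserves coproducts, $U(i)$ is a transfinite composition of pushouts, taken in $\cV$, of maps of the form $A\otimes j$ with $j$ an acyclic cofibration of $\cV$. The monoid axiom precisely asserts that every such map is a weak equivalence, so $U(f)$ is a weak equivalence as well. The transfer theorem then yields the desired model structure on $\cV_A$; combinatoriality is automatic because $\cV_A$ is locally presentable and the structure is cofibrantly generated by $\{A\otimes i:i\in I\}$ and $\{A\otimes j:j\in J\}$.

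The main obstacle is exactly this last step: pushouts in $\cV$ of tensor products with acyclic cofibrations are not themselves of the form $X\otimes j$, nor need they be cofibrations, so one cannot simply invoke the model structure on $\cV$ to see they are weak equivalences. The monoid axiom was formulated by Schwede--Shipley precisely to ensure closure of the weak equivalences under this saturation, and once it is in hand the remainder of the argument is the routine transfer procedure together with bookkeeping for combinatoriality.
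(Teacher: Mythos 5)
The paper does not prove Theorem~\ref{1.2.6}; it is quoted verbatim from Schwede--Shipley (Theorem~4.1) and Lurie (Proposition~4.1.4.3), so there is no in-paper argument to compare against. Your sketch is, however, a correct reconstruction of the standard Schwede--Shipley proof via the transfer theorem: you correctly identify that $\cV_A$ is locally presentable (colimits created by $U$ because $A\otimes-$ is a colimit-preserving, hence accessible, monad), run the small object argument on $\{A\otimes j : j\in J\}$ to reduce the transfer hypothesis to showing $U(i)$ is a weak equivalence for a relative $\{A\otimes j\}$-cell complex $i$, and then invoke the monoid axiom, which is by design exactly the statement that transfinite compositions of pushouts of maps of the form $(\text{acyclic cofibration})\otimes X$ are weak equivalences. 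Your closing remark correctly isolates why the monoid axiom (and not just the pushout-product/unit axioms) is indispensable here. One minor point of phrasing: in the sentence ``The monoid axiom precisely asserts that every such map is a weak equivalence, so $U(f)$ is a weak equivalence as well,'' the antecedent of ``such map'' should clearly be the transfinite composition $U(i)$ rather than the individual maps $A\otimes j$, and the final inference uses that weak equivalences are closed under retracts, applied to $U(f)$ as a retract of $U(i)$; making that explicit would remove the only ambiguity in an otherwise accurate sketch.
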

\begin{cor}\label{1.2.7}
Let $A$ be a dg ring. Then the category $A\cmod$ admits a proper combinatorial model structure satisfying the following conditions:
\begin{enumerate}
\item[(F)]A map is a fibration if and only if it is an epimorphism.
\item[(W)]A map is a weak equivalence if and only if it is a quasi-isomorphism.
\end{enumerate}
\end{cor}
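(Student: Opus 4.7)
My plan is to invoke Theorem \ref{1.2.6} with the combinatorial symmetric monoidal model category $\bZ\cmod$ equipped with the projective model structure of Theorem \ref{1.2.3} (which satisfies the monoid axiom), and with the monoid object $A$. A dg ring is precisely a monoid in $\bZ\cmod$, and left $A$-modules in the monoidal sense coincide with the objects of $A\cmod$. Since the fibrations (resp.\ weak equivalences) of the projective model structure on $\bZ\cmod$ are the degreewise epimorphisms (resp.\ quasi-isomorphisms), and these classes are detected by the forgetful functor $A\cmod\to\bZ\cmod$, Theorem \ref{1.2.6} immediately produces the desired combinatorial model structure on $A\cmod$ satisfying conditions (F) and (W).

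The remaining task is properness. For right properness, consider a pullback square in $A\cmod$ whose lower horizontal map $f:X\to Y$ is a quasi-isomorphism and whose right vertical map $p:Z\to Y$ is an epimorphism. Since pullbacks in the abelian category $A\cmod$ preserve kernels, the pulled-back map $p':P\to X$ is an epimorphism with $\Ker p'\cong\Ker p$. Applying the five lemma to the morphism of long exact cohomology sequences arising from the short exact sequences $0\to\Ker p\to P\to X\to0$ and $0\to\Ker p\to Z\to Y\to0$ shows that the top horizontal map $P\to Z$ is a quasi-isomorphism.

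For left properness, I would first verify that every cofibration in $A\cmod$ is a monomorphism; the dual of the preceding argument, using that pushouts in an abelian category preserve cokernels and monomorphisms, then yields the conclusion. The generating cofibrations $A\otimes S^{n-1}\hookrightarrow A\otimes D^n$, where $S^{n-1}$ and $D^n$ denote the sphere and disk complexes over $\bZ$, are degreewise split monomorphisms, and the class of monomorphisms in the Grothendieck abelian category $A\cmod$ is closed under pushouts, transfinite compositions, and retracts. Hence every cofibration is a monomorphism. The main delicate point of the proof is this verification that cofibrations are monomorphisms; once that is in hand, both properness statements follow from the five lemma applied to the long exact cohomology sequences induced by the relevant short exact sequences.
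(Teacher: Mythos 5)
Your proposal is correct and matches the approach the paper implies: Corollary~\ref{1.2.7} appears without proof immediately after Theorem~\ref{1.2.6}, which is exactly the tool you invoke (with $\cV=\bZ\cmod$ carrying the projective model structure of Theorem~\ref{1.2.3} and $A$ the monoid). You also correctly observe that Theorem~\ref{1.2.6} alone does not yield properness, and your five-lemma arguments for both left and right properness are sound; for right properness there is an even quicker route, since every object of $A\cmod$ is fibrant in this model structure and fibrancy of all objects implies right properness.
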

For a relation of the two model structures, one can easily prove the following result:
\begin{prop}[\cite{Hov1}, \cite{A}]\label{1.2.8}
For any (dg) ring $A$ the injective and projective model structures on $A\cmod$ are Quillen equivalent.
\end{prop}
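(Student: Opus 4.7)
The plan is to exhibit the identity adjunction
\[
\mathrm{id}:A\cmod_{\mathrm{proj}}\rightleftarrows A\cmod_{\mathrm{inj}}:\mathrm{id}
\]
as a Quillen equivalence. Since both model structures have the same class of weak equivalences, namely the quasi-isomorphisms, the Quillen equivalence criterion reduces purely to the statement that this identity pair is a Quillen adjunction; once that is verified, the equivalence is automatic because the derived unit and counit at any bifibrant object are quasi-isomorphisms (in fact identities).

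So I would proceed in two steps. First, I would verify that every cofibration of the projective model structure is a monomorphism, equivalently a cofibration of the injective model structure. For an ordinary ring $A$, this is immediate from Hovey's explicit description of generating projective cofibrations as the maps $S^{n-1}A\hookrightarrow D^nA$ (with the convention $S^{n-1}A=0$ for $n=0$), which are monomorphisms; the class of monomorphisms is closed under pushouts, transfinite compositions and retracts, so the entire saturated class they generate consists of monomorphisms. For the dg ring case obtained by the transfer of Corollary \ref{1.2.7}, the generating projective cofibrations are of the form $A\otimes_{k_0}i$ where $i$ runs over a set of generating cofibrations of the projective model structure on $k\cmod$ (or in Hovey's set-up, of $\bZ\cmod$); again these are monomorphisms, and the same closure argument applies.

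Second, I would deduce that the identity functor $\mathrm{id}:A\cmod_{\mathrm{proj}}\to A\cmod_{\mathrm{inj}}$ preserves acyclic cofibrations. But an acyclic projective cofibration is a projective cofibration that is a quasi-isomorphism, which by the previous step is a monomorphism that is a quasi-isomorphism, i.e.\ an acyclic injective cofibration. Combined with the identity on weak equivalences, this shows the identity adjunction is a Quillen adjunction.

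Finally, for the Quillen equivalence itself, it suffices to check that for every projective-cofibrant $X$ and every injective-fibrant $Y$, a morphism $X\to Y$ is a quasi-isomorphism in one structure if and only if it is so in the other; this holds trivially because the classes of weak equivalences coincide. The only substantive obstacle is the first step, namely controlling the generating cofibrations after the transfer in the dg case; this is routine once one recalls the explicit generators used in Corollary \ref{1.2.7}, so no serious difficulty is expected.
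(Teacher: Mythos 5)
The paper cites \cite{Hov1} and \cite{A} for this proposition and gives no proof of its own, so there is no internal argument to compare against; your reconstruction is correct and is the standard one. The crucial step — that every projective cofibration is a monomorphism, hence an injective cofibration, because the generating projective cofibrations $S^{n-1}A\hookrightarrow D^nA$ (resp.\ $A\otimes i$ after the transfer in the dg case) are degreewise split monomorphisms and monomorphisms in $A\cmod$ are stable under pushout, transfinite composition and retract — is exactly right, and since the two structures share the same underlying category and the same class of weak equivalences, the identity Quillen adjunction is automatically a Quillen equivalence. One small imprecision: the derived unit at a projective-cofibrant object is not literally the identity, since one must pass to an injective-fibrant replacement; it is nonetheless a quasi-isomorphism, so this does not affect the conclusion.
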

However, the two model structures are not equal in general. For instance every cofibration of $A\cmod$ with respect to the injective model structure is not necessarily a cofibration with respect to the projective model structure. It is known that the coincidence happens for example when $A$ is a field. For its proof we shall introduce a generalization of Theorem \ref{1.2.3} due to Mark Hovey:
\begin{thm}[\cite{CH} Theorem 2.2]\label{1.2.9}
Let $\cA$ be a bicomplete abelain category, and $\cP$ be a projective class on $\cA$. Then the category $C(\cA)$ together with the $\cP$-equivalences, the $\cP$-fibrations, and the $\cP$-cofibrations (in the sense of \cite{CH}) gives rise to a model category if and only if cofibrant replacements exist. Moreover, the resulting model structure is proper if it exists. The latter condition is satisfied in the following cases:
\begin{enumerate}
\item[(A)]$\cP$ is the pullback of the trivial projective class (i.e., the whole class of objects) along a right adjoint that preserves countable sums;
\item[(B)]There are enough $\kappa$-small $\cP$-projectives for some cardinal $\kappa$, and $\cP$-resolutions can be chosen functorially.
\end{enumerate}
\end{thm}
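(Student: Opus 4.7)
The overall strategy is to verify the model category axioms (MC1--MC5) for the three classes simultaneously and then address properness and the two sufficient conditions. Bicompleteness of $C(\cA)$ is inherited from that of $\cA$. Two-out-of-three for $\cP$-equivalences reduces, by applying $\Hom(P,-)$ for each $P\in\cP$, to two-out-of-three for quasi-isomorphisms of complexes of abelian groups. Closure under retracts for all three classes is routine, as each is characterized either by a lifting property or by a quasi-isomorphism condition on $\Hom(P,-)$. One half of the lifting axiom MC4, namely that $\cP$-cofibrations have the LLP against trivial $\cP$-fibrations, holds by the very definition of $\cP$-cofibration. The remaining half, that trivial $\cP$-cofibrations lift against $\cP$-fibrations, will fall out of the same small-object argument that produces the factorization MC5.

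The crux is the two factorization axioms, and this is where the ``if and only if'' condition enters. The natural candidate generating (trivial) cofibrations are the standard maps built out of $P\in\cP$: the inclusions $S^{n-1}(P)\hookrightarrow D^n(P)$ (of a sphere into the corresponding disk) and $0\hookrightarrow D^n(P)$. If a small-object-style argument can be applied to these, one obtains both factorizations and the missing half of MC4 in one stroke. The existence of a functorial cofibrant replacement encodes exactly the data needed to factor an arbitrary map $f\colon X\to Y$ as a $\cP$-cofibration followed by a trivial $\cP$-fibration: replace the source by a cofibrant object and then use a mapping-cylinder construction. Conversely, if a model structure exists then cofibrant replacement is available tautologically, giving the forward implication.

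For the sufficient conditions (A) and (B), both are classical situations in which the small object argument converges without assuming smallness of every object of $\cP$. In (A), if $\cP=U^{-1}(\text{all objects})$ for a right adjoint $U$ preserving countable sums, then the left adjoint $F$ produces enough $\cP$-projectives of the form $F(X)$; preservation of countable sums by $U$ implies that these projectives are $\omega$-small relative to the generating cofibrations, so the small object argument converges at level $\omega$ and yields functorial cofibrant replacements. In (B), the assumed $\kappa$-smallness of enough $\cP$-projectives enables a direct transfinite small object argument of length $\kappa$, and the functorial-$\cP$-resolution hypothesis upgrades the resulting factorization to a functorial one. In both cases, the cofibrant replacement condition is verified and the model structure exists.

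Properness is checked last. Right properness reduces, via $\Hom(P,-)$, to the statement that the pullback of a quasi-isomorphism along a degreewise $\cP$-epi remains a quasi-isomorphism after applying $\Hom(P,-)$; this follows from exactness of $\Hom(P,-)$ on $\cP$-epis together with the five-lemma applied to the long exact cohomology sequence. Left properness is the dual assertion, facilitated by the explicit form of the generating $\cP$-cofibrations $S^{n-1}(P)\hookrightarrow D^n(P)$. The main obstacle is, as anticipated, the construction of cofibrant replacements, i.e.\ arranging that the small object argument converges in the absence of any blanket smallness hypothesis on $\cP$; conditions (A) and (B) are precisely the two standard ways of circumventing this difficulty.
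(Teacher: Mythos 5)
The paper does not contain a proof of this statement. Theorem~\ref{1.2.9} is quoted verbatim from Christensen--Hovey [CH, Theorem 2.2] and used as a black box; nothing in the body of the paper re-derives it. So there is no ``paper's own proof'' to compare your plan against, and what follows is an assessment of the plan on its own terms.

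Your broad outline matches the actual strategy of [CH]: bicompleteness of $C(\cA)$, two-out-of-three and retract closure are routine; one half of MC4 is a tautology because $\cP$-cofibrations are \emph{defined} by the lifting property; the entire weight of the ``if and only if'' sits on the factorization axiom; and (A) and (B) are exactly the hypotheses under which one can circumvent the absence of a blanket smallness assumption on $\cP$. That part is accurate.

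Where the plan is too thin is the central factorization step. First, the factorization of an arbitrary $f\colon X\to Y$ as a trivial $\cP$-cofibration followed by a $\cP$-fibration does not need a transfinite small object argument at all: one chooses for each $n$ a $\cP$-epi $P_n\twoheadrightarrow Y^n$ and sets $Z=X\oplus\bigoplus_n D^n(P_n)$; this is a single-step construction, valid without any smallness, and conflating it with the small object argument blurs which factorization is genuinely delicate. Second, and more seriously, the factorization as a $\cP$-cofibration followed by a trivial $\cP$-fibration is the place where cofibrant replacement is used, and your recipe ``replace the source by a cofibrant object and then use a mapping-cylinder construction'' does not work as stated. Replacing the source gives $\tilde X\to X\to Y$, which does not factor $X\to Y$. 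The naive alternative $X\hookrightarrow X\oplus\tilde Y\to Y$ (with $\tilde Y\to Y$ a cofibrant replacement of the target) makes the first map a split mono with cofibrant cokernel, but the second map $X\oplus\tilde Y\to Y$ is a $\cP$-fibration and not a $\cP$-equivalence, since $\Hom(P,X)$ survives in its ``kernel.'' The correct construction in [CH] is a more careful interleaving of the cylinder factorization with a cofibrant replacement applied at the right spot, and this is precisely the nontrivial content of their Lemma 2.4; a proof attempt has to make this explicit and verify both that the first leg lifts against trivial $\cP$-fibrations and that the second leg is simultaneously a $\cP$-epi in each degree and a $\cP$-equivalence. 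As it stands, your sketch asserts the conclusion at exactly the point where the argument is hardest.

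Two smaller points. The theorem hypothesizes that cofibrant replacements exist, not that they exist functorially; inserting ``functorial'' silently strengthens the assumed condition, and indeed [CH] takes care to distinguish the two (functoriality only reappears in case (B)). And for properness, left properness is not the literal dual of right properness here because $\cP$-cofibrations are defined by a lifting property rather than concretely; you would need to argue, e.g., via the explicit cell-complex description of relative $\cP$-cell complexes, that pushing out a $\cP$-equivalence along a $\cP$-cofibration remains a $\cP$-equivalence, rather than appealing to a duality that the setup does not actually possess.
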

As a special case where $\cP$ is the trivial projective class we get the following consequence:
\begin{thm}[\cite{CH} Example 3.4]\label{1.2.10}
Let $\cA$ be a bicomplete abelian category, and $C(\cA)$ the category of complexes of objects of $\cA$. Then there exists a model structure on $C(\cA)$ satisfying the following conditions:
\begin{enumerate}
\item[(C)]A map is a cofibration if and only if it is a degreewise split monomorphism.
\item[(F)]A map is a fibration if and only if it is a degreewise split epimorphism.
\item[(W)]A map is a weak equivalence if and only if it is a homotopy equivalence.
\end{enumerate}
In particular, every object is both fibrant and cofibrant. Moreover, if $\cA$ is closed monoidal then this model structure is also monoidal for the standard monoidal structure on $C(\cA)$.
\end{thm}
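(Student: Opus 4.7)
The plan is to derive the theorem as the special case of Theorem \ref{1.2.9} where $\cP$ is the trivial projective class on $\cA$, i.e., the class of all objects. Under this choice, a short exact sequence in $\cA$ is $\cP$-exact iff it is split, so the $\cP$-cofibrations on $C(\cA)$ are precisely the degreewise split monomorphisms, the $\cP$-fibrations are the degreewise split epimorphisms, and the $\cP$-equivalences are the chain homotopy equivalences. This matches exactly the three classes prescribed in (C), (F), (W).

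To invoke Theorem \ref{1.2.9} I then need only check that cofibrant replacements exist. But $0\to X$ is trivially a degreewise split monomorphism for every $X\in C(\cA)$, with retraction the unique map $X\to 0$. Hence every object of $C(\cA)$ is cofibrant, the identity functor already serves as a functorial cofibrant replacement, and Theorem \ref{1.2.9} produces the desired proper model structure. Dually, every object is fibrant since $X\to 0$ is always a degreewise split epimorphism.

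For the monoidal part I would equip $C(\cA)$ with the standard tensor product of complexes induced by the closed monoidal structure on $\cA$. The unit axiom is automatic because the monoidal unit is cofibrant, like every object. For the pushout--product axiom, given cofibrations $i:A\to B$ and $j:C\to D$ with chosen degreewise retractions, a direct bookkeeping identifies, as graded objects, a decomposition of $B\otimes D$ into the pushout $(B\otimes C)\sqcup_{A\otimes C}(A\otimes D)$ together with a complementary graded summand $(B/A)\otimes(D/C)$; this exhibits the pushout--product $i\Box j$ as a degreewise split monomorphism. When moreover $j$ is a weak equivalence, the chosen splitting forces the cokernel $D/C$ to be chain contractible, hence so is its tensor product appearing in the complement, and $i\Box j$ becomes a trivial cofibration. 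The main technical obstacle is this last diagram chase combining splittings with contracting homotopies; it is routine but one must be careful because the degreewise splittings do not respect the differentials, so the decomposition of $B\otimes D$ holds only as a graded object.
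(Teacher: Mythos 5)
This theorem is cited in the paper without proof (it is attributed to \cite{CH}, Example 3.4), so there is no in-paper argument to compare against; your route via the trivial projective class and Theorem~\ref{1.2.9} is exactly how \cite{CH} derives its Example~3.4 from its Theorem~2.2.

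One step is elided in your cofibrant-replacement check. The $\cP$-cofibrations of \cite{CH} are not \emph{defined} as degreewise split monomorphisms: they are characterized (\cite{CH}, Lemma~2.4 and Proposition~2.6) as degreewise split monomorphisms whose cokernel is $\cP$-cofibrant, and a complex is $\cP$-cofibrant iff it consists of $\cP$-projectives and every chain map from it to a $\cP$-trivial complex is null-homotopic. So ``$0\to X$ is a degreewise split mono, hence $X$ is cofibrant'' silently assumes the cokernel $X$ is already $\cP$-cofibrant, which is part of what you are establishing. For the trivial projective class this does hold, but one must say why: every object of $\cA$ is $\cP$-projective, and any chain map $f:X\to D$ with $1_D\simeq 0$ satisfies $f=1_D\circ f\simeq 0$; hence every complex is $\cP$-cofibrant and the identity is a functorial cofibrant replacement. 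Cleaner still, and closer to the phrasing of Theorem~\ref{1.2.9}, is to observe that case~(A) applies outright: the trivial projective class is the pullback of itself along the identity functor, which preserves countable sums.

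The monoidal part is a sketch but the key points are right: the pushout-product of degreewise split monos is a degreewise split mono whose degreewise complement is $(B/A)\otimes(D/C)$; the cokernel of $i\Box j$ is $(B/A)\otimes(D/C)$ as a complex; a degreewise split mono that is a homotopy equivalence has contractible cokernel, and tensoring a contractible complex with anything remains contractible (using the Koszul-signed homotopy $x\otimes y\mapsto(-1)^{\overline{x}}x\otimes s(y)$); and since every object is cofibrant the unit axiom is automatic. Filling in these sign-and-differential details is routine, as you note, and completes the argument.
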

We now set $\cA$ as the category of vector spaces over a field $k$. Then the resulting model structure on $k\cmod$ from Theorem \ref{1.2.10} is both injective and projective. Though Theorem \ref{1.2.9} does not assert its combinatoriality it follows in our case by combining Theorem \ref{1.2.1}.
\subsection{The Main Results}
In this paper we will achieve analogues of the above for our categories. In a view from the proof of Theorem \ref{1.2.2} and our aim to prove the combinatoriality, we will need the following easy fact at the first place:
\begin{lem}
Let $(\cA,K)$ be a (weak) pair. Then the category of (weak) $(\cA,K)$-modules is Grothendieck abelian. In particular, it is locally presentable by \cite{Be} Proposition 3.10.
\end{lem}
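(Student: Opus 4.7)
The plan is to reduce the statement to the Grothendieck property of $K\cmod$, transfer it to the weak case through the free/forgetful adjunction, and then handle the non-weak case by means of Lemma \ref{1.1.2}.

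First I would verify that $K\cmod$ is Grothendieck abelian. Since $K$ is a flat affine group scheme, the category of $K$-modules is equivalent to that of $\cO(K)$-comodules, which is well known to be Grothendieck abelian. Passing to cochain complexes preserves this property: kernels, cokernels and filtered colimits are computed degreewise, so filtered colimits remain exact, and a set of generators is produced by shifting and taking disk objects on generators of the underlying abelian category.

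Next I would propagate this to the weak case via the forgetful functor
\[U:(\cA,K)\cmod_w\to K\cmod.\]
This functor admits the left adjoint $\cA\otimes_k-$, and because $K\cmod$ is closed symmetric monoidal with tensor product commuting with colimits, $U$ creates all small limits and colimits. Hence $(\cA,K)\cmod_w$ is cocomplete abelian, and exactness of filtered colimits is inherited from $K\cmod$. A generating set is obtained as $\{\cA\otimes_k G_i\}_{i\in I}$, where $\{G_i\}_{i\in I}$ is a generating set of $K\cmod$, with the generating property transported through the adjunction.

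For the non-weak case I would invoke Lemma \ref{1.1.2}: $(\cA,K)\cmod$ is both a localization and a colocalization of $(\cA,K)\cmod_w$, so in particular the inclusion admits an exact left adjoint, which realizes $(\cA,K)\cmod$ as a Giraud-type quotient of a Grothendieck abelian category and yields the desired property. The main technical point in the whole argument is the verification that $U$ creates colimits in the weak case; this is the standard fact that categories of modules over a monoid in a closed symmetric monoidal category are cocomplete with colimits computed underlying, but one must keep track of the dg and equivariant structures simultaneously. Once this is in hand the rest is formal manipulation of adjunctions.
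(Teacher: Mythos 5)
Your proposal takes essentially the same route as the paper: reduce the non-weak case to the weak case via Lemma \ref{1.1.2}, obtain AB5 for $(\cA,K)\cmod_w$ from the fact that colimits are computed in the underlying category, and produce a generator by applying the free functor $P^{\cA,K}_{k,K,w}$ (which your $\cA\otimes_k -$ is) to a generator of $K\cmod$. One small remark: in the non-weak step you do not actually need the left adjoint of the inclusion to be exact or to invoke a Giraud quotient — it suffices that the inclusion, being both reflective and coreflective, preserves all limits and colimits, so kernels, cokernels and filtered colimits in $(\cA,K)\cmod$ are computed in $(\cA,K)\cmod_w$, and applying the reflector to a generator of $(\cA,K)\cmod_w$ gives a generator.
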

Then the next result is proved by the same way as Theorem \ref{1.2.2}.
\begin{thm}\label{1.3.2}
Let $(\cA,K)$ be a (weak) pair. Then the category of (weak) $(\cA,K)$-modules admits a proper combinatorial model structures satisfying the following conditions:
\begin{enumerate}
\item[(C)]A map is a cofibration if and only if it is a monomorphism.
\item[(W)]A map is a weak equivalence if and only if it is a quasi-isomorphism (i.e., a quasi-isomorphism as a homomorphism of cochain complexes of $k$-modules).
\end{enumerate}
We will call it the injective model structure.
\end{thm}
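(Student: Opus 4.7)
The plan is to follow the strategy of Theorem \ref{1.2.2} (Beke--Hovey--Lurie) adapted to our setting. Write $\mathcal{M}$ for the category of weak (resp.\ ordinary) $(\cA,K)$-modules, which is Grothendieck abelian and locally presentable by the preceding lemma. Note that $\mathcal{M}$ is not literally of the form $C(\cB)$ for a Grothendieck abelian $\cB$ (the differential on a weak $(\cA,K)$-module is required to satisfy the Leibniz rule), so Theorem \ref{1.2.2} does not apply verbatim; however its proof method does. Concretely, I would apply Jeff Smith's recognition theorem for combinatorial model categories, taking the weak equivalences to be the quasi-isomorphisms and the generating cofibrations to be a small set $I$ of monomorphisms between presentable objects.

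Since $\mathcal{M}$ is Grothendieck abelian, for a sufficiently large regular cardinal $\kappa$ the class of monomorphisms in $\mathcal{M}$ coincides with the saturation $\mathrm{cof}(I)$ of a small set $I$ of monomorphisms between $\kappa$-presentable objects; this is standard for locally presentable categories. One then verifies Smith's hypotheses for $W=$ quasi-isomorphisms. The 2-out-of-3 and retract properties are immediate. Accessibility of $W$ as an accessibly embedded subcategory of the arrow category follows because the forgetful functor $\mathcal{M}\to k\cmod$ preserves and reflects filtered colimits, finite limits, and quasi-isomorphisms, so that $W$ is pulled back from the analogous class in $k\cmod$, which is accessible. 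Closure of $W\cap\mathrm{cof}(I)$ under pushouts and transfinite composition is a standard consequence of the snake lemma and the exactness of filtered colimits in a Grothendieck abelian category.

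The genuinely non-formal step is showing that every map $p$ with the right lifting property against all monomorphisms is a quasi-isomorphism. This is the point at which one must work inside $\mathcal{M}$ rather than argue formally from local presentability. Following the standard template, one first shows $p$ is a split epimorphism (lift against $0\hookrightarrow Y$), so that $p$ has the form $Y\oplus K\to Y$ with $K=\ker p$; then one embeds cycles of $K$ into acyclic \emph{disk-like} subobjects of $\mathcal{M}$, produced by freely inducing the standard disks of $k\cmod$ along the adjunction of Lemma \ref{1.1.1}, and uses further liftings to kill the cohomology of $K$. I expect this to be the main obstacle, since it is the only step requiring genuine computation with the pair structure rather than a formal accessibility argument.

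Finally, properness follows from standard abelian-category arguments: left properness is the statement that a pushout of a quasi-isomorphism along a monomorphism is a quasi-isomorphism (long exact cohomology sequence in $\mathcal{M}$), combined with the fact that every object of $\mathcal{M}$ is cofibrant; right properness is dual, using that trivial fibrations are in particular epimorphisms. The non-weak case is handled by the same argument, since $(\cA,K)\cmod$ is also bicomplete Grothendieck abelian, sits inside $(\cA,K)\cmod_w$ as both a localization and a colocalization by Lemma \ref{1.1.2}, and the forgetful functor to $k\cmod$ has the same exactness properties as in the weak case.
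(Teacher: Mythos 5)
Your overall architecture matches the paper's: you invoke a recognition theorem (Smith's, equivalently Lurie's Corollary A.2.6.13), you establish that the class of monomorphisms is generated by a small set because $\mathcal{M}$ is Grothendieck abelian, you obtain accessibility of the quasi-isomorphisms by pulling back along a filtered-colimit-preserving functor, and you handle properness by exactness of pushout/pullback along monos/epis in an abelian category. Those parts line up.

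The genuine divergence, and the genuine gap, is exactly at the step you flag as the main obstacle. You propose to show that a map $p$ with the right lifting property against all monomorphisms is a quasi-isomorphism by splitting $p$ as $Y\oplus K\to Y$ and then ``freely inducing disks'' along the adjunction of Lemma~\ref{1.1.1} to kill the cohomology of $K$. This sketch has two problems. First, the left adjoint $P^{\cA,K}_{k,K,w}$ need not preserve monomorphisms over an arbitrary base ring $k$ (Theorem~\ref{1.3.2} is stated over a general commutative ring, not a field), so the induced ``sphere $\hookrightarrow$ disk'' inclusions may fail to remain injective in $\mathcal{M}$, and the lifting argument then never gets started. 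Second, even granting that, you would still need to produce, for each cycle $z$ of $K$, a monomorphism $A\hookrightarrow B$ in $\mathcal{M}$ with $B$ acyclic and a map $A\to K$ hitting $z$; this requires real work with the $(\cA,K)$-module structure and is not controlled by the free modules alone. The paper's proof sidesteps all of this with a short conceptual argument: the kernel $K=\Ker p$ also has the right lifting property against all monomorphisms, i.e.\ $K$ is an \emph{injective object} of $\mathcal{M}$, and every injective object $I$ is acyclic because the natural monomorphism $I\hookrightarrow\Cone(\mathrm{id}_I)$ splits, exhibiting $I$ as a retract of the acyclic complex $\Cone(\mathrm{id}_I)$. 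You should replace your disk-based sketch with this observation; it works over any base ring and requires no explicit generators.

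Two smaller remarks. For right properness you write ``trivial fibrations are in particular epimorphisms''; what is actually needed is that \emph{fibrations} are surjective so that pullbacks along them preserve quasi-isomorphisms, and this is supplied by the paper's Lemma~\ref{2.3.9} (a map with the right lifting property against all acyclic injections is surjective, shown via the cone of the identity). Finally, for the non-weak case the paper simply runs the same argument inside $(\cA,K)\cmod$; your appeal to Lemma~\ref{1.1.2} is fine but not how the paper structures it.
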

\begin{cor}\label{1.3.3}
The forgetful functors $\cF_{\cB,L,w}^{\cA,K}$ and $\cF_{\cB,L}^{\cA,K}$ of \cite{H1} are left Quillen with respect to the injective model structure. In particular, their right adjoints $I^{\cB,L}_{\cA,K,w}$ and $I^{\cB,L}_{\cA,K}$ admit the right derived functors.
\end{cor}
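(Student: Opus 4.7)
The plan is to check directly that both $\cF^{\cA,K}_{\cB,L,w}$ and $\cF^{\cA,K}_{\cB,L}$ preserve cofibrations and weak equivalences for the injective model structure of Theorem \ref{1.3.2}; once this is verified they are left Quillen, and the right derived functors of their right adjoints $I^{\cB,L}_{\cA,K,w}$ and $I^{\cB,L}_{\cA,K}$ (both constructed in \cite{H1}) follow at once from the abstract theory of Quillen adjunctions.

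The observation driving both verifications is that every forgetful functor in sight leaves the underlying cochain complex of $k$-modules unchanged. Hence, since the weak equivalences in each of the four categories in play are by definition the quasi-isomorphisms on the underlying cochain complex (condition (W) of Theorem \ref{1.3.2}), the forgetful functors preserve weak equivalences tautologically.

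For cofibrations, I would use that a map of (weak) $(\cA,K)$-modules is a monomorphism if and only if its underlying map of $k$-modules is injective. In the weak case this is the standard fact for modules over a monoid in a closed monoidal category; in the non-weak case one additionally invokes Lemma \ref{1.1.2}, which says the inclusion $(\cA,K)\cmod\hookrightarrow(\cA,K)\cmod_w$ is simultaneously a (co)reflection and hence preserves and reflects monomorphisms. Combined with condition (C) of Theorem \ref{1.3.2}, this shows cofibrations are preserved.

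Trivial cofibrations are then preserved as well, so both forgetful functors are left Quillen. The right adjoints $I^{\cB,L}_{\cA,K,w}$ and $I^{\cB,L}_{\cA,K}$ are accordingly right Quillen and admit right derived functors via the standard construction by fibrant replacement. I do not anticipate any substantive obstacle: once one observes that the injective model structure is defined entirely at the level of the underlying cochain complex, which is exactly what the forgetful functors return, the argument is essentially formal.
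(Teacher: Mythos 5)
Your proposal is correct and follows essentially the same route as the paper, which simply observes (Lemma \ref{2.3.10}) that the claim is ``obvious by definition.'' You spell out the two observations that make it obvious: weak equivalences and cofibrations in the injective model structure are characterized purely in terms of the underlying cochain complex of $k$-modules (quasi-isomorphisms and injections, respectively), and the forgetful functors leave that underlying complex unchanged, so preservation of cofibrations, weak equivalences, and hence trivial cofibrations is immediate. Your detour through Lemma \ref{1.1.2} to see that monomorphisms in $(\cA,K)\cmod$ are detected on underlying complexes works, though the paper offers an even more direct route: it records that finite limits (in particular kernels) of both $(\cA,K)\cmod_w$ and $(\cA,K)\cmod$ are computed in $k\cmod$, from which the characterization of monomorphisms is automatic.
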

To put the projective model structure we work with a field $k$ of characteristic 0 and reductive algebraic groups $K$ over $k$.
\begin{thm}\label{1.3.4}
There is a proper combinatorial symmetric monoidal model structure on $K\cmod$ such that the following conditions are satisfied: 
\begin{enumerate}
\item[(C)]A map is a cofibration if and only if it is injective.
\item[(F)]A map is a fibration if and only if it is surjective.
\item[(W)]A map is a weak equivalence if and only if it is a quasi-isomorphism.
\end{enumerate}
\end{thm}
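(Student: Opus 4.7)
The plan is to apply Theorem \ref{1.2.10} to the abelian category $\cA$ of rational $K$-modules (concentrated in degree zero). This yields a proper model structure on $C(\cA) = K\cmod$ whose cofibrations are the degreewise split monomorphisms, fibrations are the degreewise split epimorphisms, and weak equivalences are the chain homotopy equivalences. Since $\cA$ is closed symmetric monoidal (tensor over $k$ with diagonal $K$-action, internal $\Hom_k$ with conjugation action), the same theorem makes this into a monoidal model structure for the standard tensor product on $C(\cA)$; the braiding given by the Koszul sign rule makes it symmetric monoidal, and the unit axiom is automatic because the unit $k$ is cofibrant (the map $0 \to k$ being a monomorphism).

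The characteristic-zero and reductivity hypotheses are then used to identify the three classes with those demanded by (C), (F), (W). Complete reducibility makes $\cA$ semisimple; hence every monomorphism in $\cA$ splits and every epimorphism splits, so a degreewise split monomorphism (resp.\ epimorphism) in $C(\cA)$ is simply an injection (resp.\ surjection). Moreover any complex in $C(\cA)$ decomposes as the direct sum of its cohomology (with zero differential) and a contractible complex, from which a standard argument shows that chain homotopy equivalences coincide with quasi-isomorphisms. This settles (C), (F) and (W).

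The only input not supplied by Theorem \ref{1.2.10} is combinatoriality. For this I would apply Theorem \ref{1.3.2} to the weak pair $(k, K)$: it provides a proper combinatorial model structure on $K\cmod$ whose cofibrations are the monomorphisms and whose weak equivalences are the quasi-isomorphisms. Because a model structure on a bicomplete category is uniquely determined by its cofibrations and its weak equivalences, this structure coincides with the one constructed above, and combinatoriality transfers. The main obstacle I expect is the identification of quasi-isomorphisms with chain homotopy equivalences, which is the single step that genuinely uses the characteristic-zero and reductivity assumptions; everything else is then formal from the cited theorems.
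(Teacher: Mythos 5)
Your proposal is correct and follows essentially the same route as the paper: the paper first records (Corollary \ref{2.4.1}) that Theorem \ref{1.2.10} applied to a bicomplete semisimple abelian category yields the stated classes of (co)fibrations and weak equivalences, and then specializes to rational $K$-modules, obtaining combinatoriality (as well as properness) by matching cofibrations and weak equivalences against the already-constructed injective model structure of Theorem \ref{1.3.2}. Your unpacking of the semisimplicity step (splitting of monos and epis, and the identification of chain homotopy equivalences with quasi-isomorphisms) is precisely the implicit content of the paper's Corollary \ref{2.4.1}, and your appeal to uniqueness of a model structure given its cofibrations and weak equivalences is the same device the paper gestures at in the discussion following Theorem \ref{1.2.10}.
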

Using the transfer, we deduce the following result:
\begin{thm}\label{1.3.5}
If we are given a (weak) pair $(\cA,K)$ with $K$ reductive the category of (weak) $(\cA,K)$-modules admits a proper combinatorial model structure satisfying the following conditions:
\begin{enumerate}
\item[(F)]A map is a fibration if and only if it is surjective.
\item[(W)]A map is a weak equivalence if and only if it is a quasi-isomorphism.
\end{enumerate}
Moreover, it is Quillen equivalent with Theorem \ref{1.3.2}.
\end{thm}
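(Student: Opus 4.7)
The plan is to obtain both model structures by transfer along appropriate adjunctions, starting from the symmetric monoidal model structure on $K\cmod$ provided by Theorem \ref{1.3.4}. For the weak case I would transfer along the free/forgetful adjunction between $K\cmod$ and $(\cA,K)\cmod_w$; for the non-weak case I would further transfer using the inclusion/(co)reflection adjunction supplied by Lemma \ref{1.1.2}. Properness and the Quillen equivalence with Theorem \ref{1.3.2} should then be essentially formal.

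The first and main step is to check that the model structure on $K\cmod$ of Theorem \ref{1.3.4} satisfies the monoid axiom of \cite{SS}. The working hypotheses here pay off: since $k$ has characteristic $0$ and $K$ is reductive, $\Rep(K)$ is semisimple, so every short exact sequence in $K\cmod$ splits degreewise. The endofunctor $-\otimes_k X$ therefore preserves monomorphisms and quasi-isomorphisms for every $X\in K\cmod$; together with the description of trivial cofibrations as acyclic injections, this yields both the pushout-product and monoid axioms. Applying Theorem \ref{1.2.6} with $\cV = K\cmod$ and the monoid $\cA$ then produces a combinatorial model structure on $(\cA,K)\cmod_w$ in which fibrations and weak equivalences are detected by the forgetful functor to $K\cmod$, giving exactly the surjections and quasi-isomorphisms.

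For the non-weak version, the fully faithful inclusion $i:(\cA,K)\cmod \hookrightarrow (\cA,K)\cmod_w$ has both a left adjoint $L$ and a right adjoint $R$ by Lemma \ref{1.1.2}. I would apply Theorem \ref{1.2.5} to the adjunction $L\dashv i$, with the combinatorial structure on $(\cA,K)\cmod_w$ just constructed, transporting the model structure to $(\cA,K)\cmod$; fibrations (resp.\ weak equivalences) are then the maps $f$ with $i(f)$ a surjection (resp.\ quasi-isomorphism), which are precisely the classes in the statement. The acyclicity hypothesis of Theorem \ref{1.2.5} I would verify through a path-object argument, using a standard path object in $(\cA,K)\cmod_w$ coming from the monoidal structure and transferring it along $R$ into $(\cA,K)\cmod$.

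Right properness follows in either category because pullbacks of surjections are surjections and pullbacks of quasi-isomorphisms along surjections are quasi-isomorphisms, by the five lemma in an abelian category; left properness reduces, via the generating (trivial) cofibrations of the form $\cA\otimes_k\alpha$, to the exactness of $\cA\otimes_k-$, which is immediate from semisimplicity. For the Quillen equivalence with Theorem \ref{1.3.2}, the identity functor is left Quillen from the projective to the injective structure: the generating cofibrations of the transferred structure are maps $\cA\otimes_k\alpha$ with $\alpha$ a monomorphism, and since $k$ is a field such maps are themselves monomorphisms, which are stable under the relevant cell constructions and retracts in the Grothendieck abelian category $(\cA,K)\cmod_w$; since both structures have the same weak equivalences, the induced functor on homotopy categories is tautologically an equivalence. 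The main obstacle I anticipate is the monoid axiom verification at the outset, which crucially needs complete reducibility of $\Rep(K)$; once this is settled, the remainder is either formal or rides on the same semisimplicity.
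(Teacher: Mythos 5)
Your plan is reasonable in outline and has overlap with the paper's proof, but the paper takes a cleaner route and your version has gaps precisely at the non-weak step and at the acyclicity verification.

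\textbf{Where the routes differ.} For the weak case, you go through the monoid axiom and \cite{SS} Theorem 4.1. This is a legitimate alternative (and the paper itself uses this route for \emph{monoids} in $K\cmod$, Corollary \ref{2.4.3}), but the paper's Theorem \ref{2.5.3} instead transfers the projective structure on $K\cmod$ directly along the adjunction $(P^{\cA,K}_{k,K,w},\cF^{k,K}_{\cA,K,w})$ and, for the non-weak case, directly along $\cF^{U(\fk),K}_{\cA,K}$ (using the identification of $(U(\fk),K)\cmod$ with $K\cmod$ from \cite{H1} Example 2.2.4 (5)). The key observation that makes both the transfer condition and properness and the Quillen equivalence of Theorem \ref{2.5.6} nearly automatic, and which your proposal never invokes, is Lemma \ref{2.3.9}: in \emph{both} $(\cA,K)\cmod_w$ and $(\cA,K)\cmod$, any map with the right lifting property against acyclic monomorphisms is surjective. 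Combined with the already-established injective model structure (Theorem \ref{2.3.1}), this says every injective fibration is a surjection; hence any map with LLP against all surjections is an injective trivial cofibration, in particular a quasi-isomorphism (giving the transfer condition), and the identity from the injective to the projective structure is right Quillen (giving the Quillen equivalence). This argument is uniform in the weak and non-weak cases and does not touch cofibrations at all.

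\textbf{Gaps in your proposal.} Two points do not go through as written. First, the acyclicity check for the transfer $L\dashv i$ via a path object ``transferred along $R$'' is not coherent: $R$ is a right adjoint to the inclusion, but there is no reason a path object of a weak module should land back in a path object after applying $R$, and in fact it is unnecessary, since the $\Path(M)$ construction of the paper already produces an $(\cA,K)$-module when $M$ is one. You would do better either to cite that directly or, simpler, to invoke Lemma \ref{2.3.9} as above. Second, your Quillen-equivalence argument runs through generating cofibrations of the form $\cA\otimes_k\alpha$; this is fine for $(\cA,K)\cmod_w$, but under your own two-step transfer the generating cofibrations of $(\cA,K)\cmod$ are $L(\cA\otimes_k\alpha)$, and the reflection $L$ is a quotient functor, which has no reason to preserve monomorphisms. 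So the claim that projective cofibrations are monomorphisms is unproved in the non-weak case. Again, arguing through fibrations with Lemma \ref{2.3.9} avoids this issue entirely.
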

\begin{cor}
The functors $P^{\cB,L}_{\cA,K,w}$ and $P^{\cB,L}_{\cA,K}$ are left Quillen. In particular, these admit the left derived functors.
\end{cor}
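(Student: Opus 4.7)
The plan is to combine Theorem~\ref{1.3.5} with Proposition~\ref{1.1.4}. Under the standing hypotheses required for $P^{\cB,L}_{\cA,K,w}$ and $P^{\cB,L}_{\cA,K}$ to be defined (namely $k=\bC$ and $K,L$ reductive), Theorem~\ref{1.3.5} equips each of $(\cA,K)\cmod_w$, $(\cA,K)\cmod$, $(\cB,L)\cmod_w$, and $(\cB,L)\cmod$ with a projective model structure whose fibrations are the surjections and whose weak equivalences are the quasi-isomorphisms. By Proposition~\ref{1.1.4}, the functors $P^{\cB,L}_{\cA,K,w}$ and $P^{\cB,L}_{\cA,K}$ already admit exact right adjoints. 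By the standard criterion for a Quillen adjunction, it therefore suffices to check that those right adjoints preserve both fibrations and trivial fibrations; equivalently, that they preserve fibrations and all weak equivalences.

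Both preservations follow formally from exactness. Being exact, a right adjoint preserves epimorphisms, hence surjections, and so preserves fibrations. For weak equivalences, recall that a morphism of cochain complexes is a quasi-isomorphism precisely when its mapping cone is acyclic, and acyclicity is controlled by the cohomology objects $H^i(-)=\ker(d^i)/\image(d^{i-1})$, which are constructed purely from kernels and cokernels in the ambient abelian category of cochain complexes. A $k$-linear exact functor between such categories therefore commutes with the formation of $H^i$, so it sends quasi-isomorphisms to quasi-isomorphisms. Applying this to the right adjoints of $P^{\cB,L}_{\cA,K,w}$ and $P^{\cB,L}_{\cA,K}$ shows that they are right Quillen, so the $P$-functors are left Quillen.

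The existence of the left derived functors is then automatic: we may choose a cofibrant replacement functor $Q$ on the source and set $\bL P := P\circ Q$, Ken Brown's lemma guaranteeing that $P$ takes weak equivalences between cofibrant objects to weak equivalences. There is essentially no obstacle; the only subtlety worth flagging is that ``exact'' in Proposition~\ref{1.1.4} must be read at the level of abelian categories of cochain complexes (with their extra pair-module structure), rather than merely at the level of the underlying graded objects, so that the cohomology-preservation argument above applies to the quasi-isomorphisms as defined in Theorem~\ref{1.3.5}.
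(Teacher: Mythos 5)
Your route is the same as the paper's: the paper proves this as Lemma 2.5.4 and simply remarks that it ``immediately follows'' from Proposition \ref{1.1.4}, i.e.\ one feeds the exact right adjoints into the projective model structures of Theorem \ref{1.3.5} and checks that an exact right adjoint is right Quillen. Your fibration-preservation step is fine (exact functors preserve epimorphisms, and surjections are the epimorphisms here since cokernels are computed in $k\cmod$), and the Ken Brown / cofibrant replacement remark at the end is standard.

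The one place where your write-up is not airtight is the weak-equivalence step. You assert that $H^i(-)=\ker(d^i)/\image(d^{i-1})$ is ``constructed purely from kernels and cokernels in the ambient abelian category of cochain complexes,'' so that any exact functor between those abelian categories commutes with $H^i$. That is not so: $d^i$ is not a morphism of complexes, and $H^i$ is not a (co)kernel computed in $C(\cA)$. Exactness of a functor between abelian categories of complexes does \emph{not} by itself force preservation of quasi-isomorphisms --- the functor sending $(X,d)\mapsto (X,0)$ preserves (degreewise) short exact sequences, hence is exact, yet destroys acyclicity. What actually closes the gap here is that the right adjoints furnished by \cite{H1}~2.5 are built degreewise from exact functors on the underlying module categories (restriction along group and algebra maps, and taking invariants for reductive groups), so they do commute with $\ker d^i$ and $\image d^{i-1}$; the word ``exact'' in Proposition \ref{1.1.4} is implicitly carrying this degreewise information, not merely exactness at the level of abelian categories of complexes. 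Your closing caveat gestures at a distinction but points it in the wrong direction: exactness on complexes and on underlying graded objects coincide; the genuine extra input is the degreewise nature of the right adjoint.
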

For a digression of Theorem \ref{1.3.4} we consider the symmetric monoidal full subcategory $K\cmod^{\leq 0}$ of $K\cmod$ spanned by complexes concentrated in nonpositive degrees. A motivation comes from derived (homotopical) algebraic geometry (\cite{TV}, \cite{L3}, \cite{GR}). In \cite{GR} for example, they use the commutative dg algebras concentrated in nonpositive degrees as a model of the coordinate rings of affine dg schemes. In this paper we acheive its equivariant analogue.
\begin{thm}
There is a right proper symmetric monoidal model structure on the category of commutative monoids of $K\cmod^{\leq 0}$ such that the following conditions are satisfied:
\begin{enumerate}
\item[(F)]A map is a fibration if and only if the underlying homomorphism of dg vector spaces is surjective.
\item[(W)]A map is a weak equivalence if and only if it is a quasi-isomorphism.
\end{enumerate}
\end{thm}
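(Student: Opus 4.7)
The plan is to apply the transfer theorem \ref{1.2.5} along the free-forgetful adjunction
\[\mathrm{Sym}:K\cmod^{\leq 0}\rightleftarrows\CAlg(K\cmod^{\leq 0}):U.\]
As preparation I would first install an analogue of the projective model structure of Theorem \ref{1.3.4} on the subcategory $K\cmod^{\leq 0}$: since this subcategory is closed under limits, colimits, and the tensor product in $K\cmod$, and since semisimplicity of representations of a reductive $K$ over a characteristic zero field continues to apply, I expect a symmetric monoidal model structure with fibrations the surjections and weak equivalences the quasi-isomorphisms, obtained either by mimicking the proof of \ref{1.3.4} or by a truncation argument. The target $\CAlg(K\cmod^{\leq 0})$ is locally presentable by standard facts on algebras in presentable categories.

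The heart of the argument is the acyclicity condition of Theorem \ref{1.2.5}. By the usual small-object reduction it suffices to verify that for each generating trivial cofibration $j:0\to Y$ of $K\cmod^{\leq 0}$ (so $Y$ is acyclic) and each $A\in\CAlg(K\cmod^{\leq 0})$, the pushout $A\to A\otimes\mathrm{Sym}(Y)$ in $\CAlg$ is a quasi-isomorphism. This reduces to the key lemma: \emph{the functor $\mathrm{Sym}$ preserves equivariant quasi-isomorphisms on $K\cmod^{\leq 0}$.} I expect this lemma to be the main obstacle, and it is precisely where the characteristic zero hypothesis enters. The point is that the symmetrization operator $\tfrac{1}{n!}\sum_{\sigma\in\fS_n}\sigma$ is a $K$-equivariant idempotent on $Y^{\otimes n}$ (the $K$-action commutes with the permutation action of $\fS_n$) whose image realizes $\mathrm{Sym}^n(Y)$ as a retract of $Y^{\otimes n}$; thus $\mathrm{Sym}^n$ preserves quasi-isomorphisms by the standard K\"unneth fact over a field. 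Taking the direct sum over $n$ yields the lemma and in particular shows $\mathrm{Sym}(Y)\simeq k$ when $Y$ is acyclic, so that $A\to A\otimes\mathrm{Sym}(Y)$ is indeed a quasi-isomorphism by another application of K\"unneth.

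Right properness then follows because fibrations are surjections and $U$ creates pullbacks: right properness of the model structure on $K\cmod^{\leq 0}$, obtained from the long exact cohomology sequence applied to the kernel of a surjection, transports to $\CAlg$ without modification. The symmetric monoidal structure on $\CAlg(K\cmod^{\leq 0})$ is given by the coproduct, whose underlying object is the tensor product over $k$; the pushout-product and unit axioms for it descend from the analogous axioms on $K\cmod^{\leq 0}$ combined with the acyclicity lemma above, completing the construction of the desired model structure.
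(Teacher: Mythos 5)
Your approach is genuinely different from the paper's, though both exploit characteristic zero. The paper invokes Lurie's freely-powered criterion (\cite{L2} Proposition 4.5.4.6): one must show every cofibration is a \emph{power cofibration}, i.e.\ that each $\wedge^n f$ is a cofibration in the projective model structure on $(K\cmod^{\leq 0})^{\fS_n}$. The slick observation is that $(K\cmod^{\leq 0})^{\fS_n}$ can be identified with $K\times\fS_n\cmod^{\leq 0}$ carrying the model structure of Proposition \ref{2.4.4}, because $K\times\fS_n$ is still reductive; cofibrations there are just monomorphisms, so $\wedge^n f$ being a cofibration is immediate, and Lurie's theorem hands over the model structure (and its properties) essentially for free. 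Your route instead applies the transfer theorem directly and grounds the acyclicity condition in the lemma that $\mathrm{Sym}$ preserves quasi-isomorphisms, proved via the $K$-equivariant symmetrization idempotent $\tfrac{1}{n!}\sum\sigma$ realizing $\mathrm{Sym}^n$ as a retract of $(-)^{\otimes n}$. This is correct and makes the role of characteristic zero more transparent than the paper's appeal to reductivity of $K\times\fS_n$. Two points deserve more care in your write-up: you tacitly assume the generating trivial cofibrations of $K\cmod^{\leq 0}$ all have domain $0$ (this is true here --- one can take $0\to V\otimes D^n$ with $V$ irreducible and $D^n$ a two-term acyclic complex concentrated in nonpositive degrees --- but it should be stated, since for domains $X\neq 0$ the pushout is a relative tensor product $A\otimes_{\mathrm{Sym}(X)}\mathrm{Sym}(Y)$ and the argument would need adjustment); and your last sentence about the symmetric monoidal and pushout-product axioms on $\CAlg(K\cmod^{\leq 0})$ is asserted rather than argued --- cofibrations of $\CAlg$ are not simply cofibrations of underlying complexes, so ``descends'' needs justification, whereas the paper outsources this to Lurie's proposition.
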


In \cite{H2} we will work on basic studies of the underlying $\infty$-category of Theorem \ref{1.3.2} and its dg enhancement.
\renewcommand{\abstractname}{Acknowledgements}
\begin{abstract}
The author is grateful to his advisor Hisayosi Matumoto for useful advice, and to Jun Yoshida for helpful discussions. He also would like to express his gratitude to Fabian Januszewski for encouraging him to work out this paper.
\end{abstract}
\section{Model structures}
We put model structures on
\[(\cA,K)\cmod_w,\ (\cA,K)\cmod,\]
whose weak equivalences are quasi-isomorphisms in order to define the unbounded derived functors of the functors $I^{\cB,L}_{\cA,K}$ and $P^{\cB,L}_{\cA,K}$ (see 1.1 for our conventions). We also give examples of homotopies. \cite{Hov1} is a standard and basic reference on model categories. The appendix of \cite{L1} is also useful.
\subsection{Local presentability}
This section is devoted to supplementary results to \cite{H1} towards constructions of the combinatorial model structures.

Let us fix a ground commutative ring $k$. Notice that for a weak pair $(\cA,K)$ over $k$ each Hom set $\Hom_{\cA,K}(M,N)$ of the category of weak $(\cA,K)$-modules is a $k$-submodule of the Hom $k$-module $\Hom_k(M,N)$ of $k\cmod$. With this $k$-module structure, the category of weak $(\cA,K)$-modules (and that of $(\cA,K)$-modules for a pair $(\cA,K)$) inherits a $k$-linear structure. Moreover, in the former paper \cite{H1} we showed that the category of weak $(\cA,K)$-modules (and that of $(\cA,K)$-modules for a pair $(\cA,K)$) is a locally small bicomplete abelian category, whose colimits and finite limits are computed in $k\cmod$ (\cite{H1} Corollary 2.3.9 (1)). In this section we verify the following stronger statements:
\begin{thm}\label{2.1.1}
\begin{enumerate}
\renewcommand{\labelenumi}{(\arabic{enumi})}
\item Let $(\cA,K)$ be a weak pair. Then the $k$-linear category of weak $(\cA,K)$-modules is a Grothendieck abelian category (\cite{Gro}).
\item Let $(\cA,K)$ be a pair. Then the $k$-linear category of $(\cA,K)$-modules is a Grothendieck abelian category.
\end{enumerate}
\end{thm}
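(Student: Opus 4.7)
Both parts reduce, given that bicompleteness, abelianness, and the computation of colimits and finite limits in $k\cmod$ are already established in \cite{H1}, to checking the AB5 axiom (exactness of filtered colimits) and the existence of a generator. My plan is to handle (1) directly and then deduce (2) from the strong form of Lemma \ref{1.1.2}.

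For (1), AB5 is immediate: filtered colimits in $(\cA,K)\cmod_w$ are computed in $k\cmod$, where they are exact. For a generator, I plan to transfer one along the forgetful functor $U\colon(\cA,K)\cmod_w\to K\cmod$ that keeps only the $K$-action and the differential. By the standard monoidal-category free-module construction, $U$ admits a left adjoint $F$ given by $V\mapsto\cA\otimes_k V$ with the diagonal $K$-action and left-multiplication $\cA$-action on the first factor. If $G$ generates $K\cmod$, then $F(G)$ generates $(\cA,K)\cmod_w$: any nonzero map $G\to U(M)$ has nonzero adjunct $F(G)\to M$. It therefore remains to equip $K\cmod$ with a generator. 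Because $K$ is flat affine, the ordinary category of $K$-modules is the category of $\cO(K)$-comodules over a flat Hopf algebra, which is Grothendieck abelian by the standard theory (every comodule is the filtered colimit of its finitely generated subcomodules, and these form a set up to isomorphism). Passage to cochain complexes is routine: if $G_0$ generates the underlying comodule category, then $\bigoplus_{n\in\bZ}D^n(G_0)$, with $D^n$ the disc complex, generates $K\cmod$.

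For (2), Lemma \ref{1.1.2} endows the inclusion $\iota\colon(\cA,K)\cmod\hookrightarrow(\cA,K)\cmod_w$ with both a left adjoint $L$ (reflection) and a right adjoint $R$ (coreflection). Since $\iota$ has a right adjoint, it preserves colimits, so filtered colimits in $(\cA,K)\cmod$ are computed in $(\cA,K)\cmod_w$ and hence are exact by (1). Applying $L$ to the generator $F(G)$ of part (1) yields a generator of $(\cA,K)\cmod$: for any nonzero $N\in(\cA,K)\cmod$ the adjunction gives $\Hom_{(\cA,K)\cmod}(LF(G),N)\cong\Hom_{(\cA,K)\cmod_w}(F(G),\iota N)\neq 0$. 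The main potential obstacle I foresee is the clean construction of the free functor $F$ and the verification of its adjunction property in the weak-pair setting, where the $K$-equivariance of the $\cA$-action and the compatibility with the differential must be tracked simultaneously through the tensor-hom adjunction in $K\cmod$; the remaining ingredients---AB5, the Grothendieck property of $\cO(K)$-comodules, the passage to cochain complexes, and the transfer through Lemma \ref{1.1.2}---are routine.
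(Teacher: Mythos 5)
Your proposal follows essentially the same route as the paper: reduce (2) to (1) via Lemma \ref{1.1.2}, get AB5 from exactness of filtered colimits in $k\cmod$, and produce a generator as the image under the left adjoint $P^{\cA,K}_{k,K,w}$ (your $F=\cA\otimes_k-$) of a generator of $K\cmod$. The paper is terser (it simply invokes the fact that $K\cmod$ is Grothendieck and that Lemma \ref{1.1.2} handles part (2)), but your additional details on comodules over a flat Hopf algebra and on transferring the generator through the reflection are exactly the justifications the paper leaves implicit.
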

\begin{proof}
In virtue of Lemma \ref{1.1.2} it will suffice to prove (1). Let $(\cA,K)$ be a weak pair. Then filtered colimits of the category of weak $(\cA,K)$-modules are exact since those of $k\cmod$ are exact. Existence of a generator of $(\cA,K)\cmod$ follows since $K\cmod$ is a Grothendieck abelian category. In fact, we have a unique monoid morphism from the unit monoid $(k,K)$ to $(\cA,K)$ (\cite{H1} Example 2.2.4 (2)), and the image of a generator of $K\cmod$ under $P^{\cA,K}_{k,K,w}$ is again a generator.
\end{proof}
\begin{cor}\label{2.1.2}
\begin{enumerate}
\renewcommand{\labelenumi}{(\arabic{enumi})}
\item For a weak pair $(\cA,K)$, the category $(\cA,K)\cmod$ is locally presentable.
\item For a pair $(\cA,K)$, the category $(\cA,K)\cmod$ is locally presentable.
\end{enumerate}
\end{cor}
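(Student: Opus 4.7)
The plan is to deduce this corollary directly from Theorem \ref{2.1.1}, since that theorem already establishes that both $(\cA,K)\cmod_w$ (for a weak pair) and $(\cA,K)\cmod$ (for a pair) are Grothendieck abelian. The only additional input needed is the general fact that every Grothendieck abelian category is locally presentable.

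Concretely, I would cite \cite{Be} Proposition 3.10 (which is already invoked earlier in the introduction, in the lemma immediately preceding Theorem \ref{1.3.2}): it asserts that a Grothendieck abelian category is locally presentable, a consequence of having a generator, filtered colimits, and exactness of filtered colimits in the sense required by the Gabriel--Popescu-type recognition. Applying this twice, once to part (1) and once to part (2) of Theorem \ref{2.1.1}, yields both claims simultaneously.

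Since everything substantial has been absorbed into Theorem \ref{2.1.1} (existence of a generator via $P^{\cA,K}_{k,K,w}$ applied to a generator of $K\cmod$, and exactness of filtered colimits coming from $k\cmod$), there is no real obstacle here, and the proof reduces to a one-line citation. If the reader prefers a self-contained argument rather than appealing to \cite{Be}, the only mild subtlety would be pinning down an explicit regular cardinal $\kappa$ such that the generator is $\kappa$-presentable; but this is standard and follows from the fact that colimits in $(\cA,K)\cmod_w$ (resp.\ $(\cA,K)\cmod$) are computed in $k\cmod$ (\cite{H1} Corollary 2.3.9 (1)), where accessibility of the generator is immediate.
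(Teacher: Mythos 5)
Your proposal is exactly the paper's proof: both deduce the corollary from Theorem \ref{2.1.1} by citing \cite{Be} Proposition 3.10 that every Grothendieck abelian category is locally presentable. The additional remarks about pinning down an explicit presentability cardinal are correct but unnecessary for the citation-based argument.
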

\begin{proof}
The assertions follow from the fact that every Grothendieck abelian category is locally presentable (\cite{Be} Proposition 3.10).
\end{proof}
\begin{rem}\label{2.1.3}
We constructed right adjoint functors $I^{\cB,L}_{\cA,K,w}$ (resp.\ $I^{\cB,L}_{\cA,K}$) of $\cF^{\cA,K}_{\cB,L,w}$ (resp.\ $\cF^{\cA,K}_{\cB,L}$) by using the theory of monoidal categories in \cite{H1} Theorem 2.3.4. Instead, these right adjoint functors can be constructed directly by Corollary \ref{2.1.2} and the adjoint functor theorem.
\end{rem}
\subsection{Notations}
Here, we collect some additional notations related to the theory of model categories. Let $\cC$ be a category, and $S$ be a class of morphisms of $\cC$. Then we will say a map $p:X\to Y$ has the left lifting property with respect to $S$ if for any tuple of maps $i:A\to B$ of $S$, $f:A\to X$, and $g:B\to Y$ of $\cC$, which satisfies $p\circ f=g\circ i$, there exists a map $F:B\to X$ such that $F\circ i=f$ and $p\circ F=g$. 
\[\xymatrix{A\ar[r]^{\forall f}\ar[d]_i& X\ar[d]^p\\
B\ar@{-->}[ru]^{\exists F}\ar[r]_{\forall g} &Y}\]
In this paper, the class of maps satisfying the left lifting property with respect to $S$ will be denoted by $S_{\bot}$. Dually, we say $i:A\to B$ has the right lifting property with respect to $S$ if for any tuple of maps $p:X\to Y$ of $S$, $f:A\to X$ and $g:B\to Y$ of $\cC$, which satisfies $p\circ f=g\circ i$, there exists a map $F:B\to X$ such that $F\circ i=f$ and $p\circ F=g$. The class of maps satisfying the right lifting property with respect to $S$ will be denoted by ${}_\bot S$.
\subsection{The injective model structure}
\begin{thm}\label{2.3.1}
The category of (weak) $(\cA,K)$-modules admits a proper combinatorial model structure satisfying the following conditions:
\begin{enumerate}
\item[(C)]A map is a cofibration if and only if it is a monomorphism.
\item[(W)]A map is a weak equivalence if and only if it is a quasi-isomorphism.
\end{enumerate}
We will call it the injective model structure.
\end{thm}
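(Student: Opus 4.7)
The plan is to adapt the proof of Theorem \ref{1.2.2} due to \cite{Be}, \cite{Hov2}, \cite{L2}, using Theorem \ref{2.1.1} as the main new ingredient. Let $\mathcal{M}$ denote the category of (weak) $(\cA,K)$-modules. By Theorem \ref{2.1.1} and Corollary \ref{2.1.2}, $\mathcal{M}$ is Grothendieck abelian and locally presentable. Moreover, by \cite{H1} Corollary 2.3.9 the forgetful functor $\mathcal{M}\to k\cmod$ is exact, so it preserves homology; consequently the class $W$ of quasi-isomorphisms in $\mathcal{M}$ is closed under 2-out-of-3 and retracts, and is accessible as a subcategory of the arrow category $\mathcal{M}^{\to}$ (since homology commutes with filtered colimits).

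The strategy is to invoke Jeffrey Smith's recognition theorem for cofibrantly generated combinatorial model structures on a locally presentable category. I fix a generator $G$ of $\mathcal{M}$ and a sufficiently large regular cardinal $\kappa$, and take as the generating set $I$ the monomorphisms $A\hookrightarrow B$ in $\mathcal{M}$ with $|B|\le\kappa$, which form a set. Since $\mathcal{M}$ is Grothendieck abelian, the saturation of $I$ under retracts, pushouts, and transfinite compositions equals the class of all monomorphisms; this is the usual cardinality-filtration argument as in \cite{Be}. Smith's theorem then reduces the proof to verifying $I_{\bot}\subseteq W$ together with the closure of the intersection of $W$ with the cofibrations under pushouts and transfinite compositions.

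The hard part is these two verifications. Both rely on the snake lemma and the long exact sequence of homology applied to short exact sequences in $\mathcal{M}$. These tools transfer verbatim from the setting of \cite{Be}, \cite{Hov2} because short exact sequences and homology in $\mathcal{M}$ are computed by passing along the exact forgetful functor to $k\cmod$; thus the analysis of acyclic fibrations and acyclic cofibrations reduces, after forgetting structure, to the classical Grothendieck abelian case. Combinatoriality is immediate from the construction given by Smith's theorem.

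Finally, properness: left properness follows because every object of $\mathcal{M}$ is cofibrant (the map $0\to X$ is a monomorphism for any $X$), by the standard criterion that a model category with all objects cofibrant is left proper. Right properness follows by a dual five-lemma argument applied to long exact sequences of homology in $k\cmod$ associated with the relevant pullbacks, as in \cite{Be}.
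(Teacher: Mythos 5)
Your proposal follows essentially the same high-level strategy as the paper: invoke a Smith-type recognition theorem on a locally presentable Grothendieck abelian category, generate cofibrations by a bounded set of monomorphisms (via Beke's argument), observe that quasi-isomorphisms form an accessible/perfect class by pulling back along cohomology, and deduce properness. The main places where the paper is more explicit, and where your write-up leaves a small but real gap in rigor, are: (a) the paper does not phrase the verification that $I_{\bot}\subseteq W$ as a reduction ``after forgetting structure'' to $k\cmod$ --- that reduction would be false as stated, since a map with the right lifting property against monomorphisms of $\mathcal{M}$ need not have the analogous lifting property in $k\cmod$; instead the paper runs the argument \emph{internally} in $\mathcal{M}$ (any map $p$ with the RLP against all monomorphisms is split surjective, its kernel is an injective object, and injective objects of $\mathcal{M}$ are acyclic because $I\to\Cone(id_I)$ admits a retract), which is valid precisely because $\mathcal{M}$ is Grothendieck abelian; and (b) for right properness the paper first identifies fibrations as surjections via Lemma \ref{2.3.9} and then cites \cite{Hov2} Corollary 1.4 for stability of quasi-isomorphisms under pullback along surjections, whereas your ``dual five-lemma as in \cite{Be}'' implicitly assumes this identification. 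Your use of ``all objects cofibrant implies left proper'' is a clean shortcut not used in the paper, but equivalent. Modulo tightening the wording in (a), your argument is sound and parallel to the paper's.
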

This is proved in a similar way to \cite{L2} Proposition 1.3.5.3 and \cite{LS} Proposition 4.1. Let us recall some results of the appendix of \cite{L1}. We say that a collection $W$ of morphisms of a locally presentable category is perfect if the following conditions hold:
\begin{enumerate}
\renewcommand{\labelenumi}{(\roman{enumi})}
\item Every isomorphism of the given category belongs to $W$.
\item The collection $W$ has the two-out-of-three property.
\item The collection $W$ is stable under filtered colimits.
\item There exists a small subset $W_0\subset W$ such that $W$ is generated by $W_0$ via filtered colimits.
\end{enumerate}
\begin{ex}[\cite{L1} Example A.2.6.11]\label{2.3.2}
For a locally presentable category, the class of its isomorphisms is perfect.
\end{ex}
\begin{prop}[\cite{L1} Corollary A.2.6.12, \cite{AR} Remark 2.50, \cite{Be} Proposition 1.18]\label{2.3.3}
Let $F:\cC\to\cC'$ be a functor between locally presentable categories preserving filtered colimits, and $W_{\cC'}$ be a perfect class of morphisms in $\cC'$. Then $W_\cC=F^{-1}W_{\cC'}$ is perfect.
\end{prop}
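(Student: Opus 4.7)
My plan is to verify the four axioms (i)--(iv) of a perfect class for $W_{\cC}=F^{-1}W_{\cC'}$ one at a time. Axioms (i), (ii), (iii) are formal consequences of the fact that $F$ is a functor that preserves filtered colimits; the real content lies in (iv), which requires an accessibility argument.

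First, the easy axioms. For (i), any functor preserves isomorphisms, and $W_{\cC'}$ contains every isomorphism, so $F^{-1}W_{\cC'}$ does as well. For (ii), if two of $f,g,g\circ f$ lie in $W_{\cC}$, then by functoriality two of $F(f),F(g),F(g\circ f)=F(g)\circ F(f)$ lie in $W_{\cC'}$, and the two-out-of-three property for $W_{\cC'}$ promotes this to all three, whence all three of $f,g,g\circ f$ lie in $W_{\cC}$. For (iii), recall that filtered colimits in the arrow category $\cC^{[1]}$ are computed pointwise, so $F$ preserves filtered colimits in $\cC^{[1]}$ as well; stability of $W_{\cC'}$ under filtered colimits therefore gives stability of $W_{\cC}$.

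The main obstacle is (iv). My approach is to pass to the accessible reformulation of perfect classes: conditions (iii) and (iv) together say precisely that $W_{\cC'}$ is an accessibly embedded accessible full subcategory of $\cC'^{[1]}$. The strategy is then to choose a regular cardinal $\kappa$ such that (a) both $\cC$ and $\cC'$ are $\kappa$-accessible (and hence so are $\cC^{[1]}$ and $\cC'^{[1]}$), (b) $F$ sends $\kappa$-compact objects of $\cC$ to $\kappa$-compact objects of $\cC'$, and (c) the generating set of $W_{\cC'}$ provided by (iv) consists of $\kappa$-compact objects of $\cC'^{[1]}$. Item (b) is possible because a colimit-preserving functor between locally presentable categories is accessible, so preserves $\kappa$-compactness for all sufficiently large $\kappa$; we then enlarge $\kappa$ to achieve (a) and (c) simultaneously.

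I would take $W_{0}$ to be a set of representatives of the $\kappa$-compact objects of $\cC^{[1]}$ that happen to lie in $W_{\cC}$; this is essentially small since the $\kappa$-compact objects of a locally presentable category form an essentially small full subcategory. It remains to show that every $w\in W_{\cC}$ is a $\kappa$-filtered colimit of elements of $W_{0}$. Writing $w=\mathrm{colim}_{i}\,u_{i}$ as the canonical $\kappa$-filtered colimit indexed by $\kappa$-compact morphisms $u_{i}$ mapping to $w$ in $\cC^{[1]}$, applying $F$ yields $F(w)=\mathrm{colim}_{i}\,F(u_{i})$ with each $F(u_{i})$ $\kappa$-compact in $\cC'^{[1]}$ by (b). The key step, and the place where the argument is most delicate, is a cofinality argument: since $F(w)\in W_{\cC'}$ and $W_{\cC'}$ is $\kappa$-accessibly embedded in $\cC'^{[1]}$, the subdiagram of indices $i$ with $F(u_{i})\in W_{\cC'}$ is cofinal in the original diagram (this is the content of the accessibility of $W_{\cC'}$ at level $\kappa$). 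Restricting to this cofinal subdiagram expresses $w$ as a $\kappa$-filtered colimit of $u_{i}$'s with $u_{i}\in W_{\cC}$, i.e.\ of elements of $W_{0}$, completing the verification of (iv).
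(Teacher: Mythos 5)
Your verification of axioms (i)--(iii) is correct and is exactly the formal part: preservation of isomorphisms, functoriality for two-out-of-three, and the pointwise computation of filtered colimits in the arrow category together with the hypothesis that $F$ preserves them. The difficulty, as you say, is axiom (iv), and this is where your argument has a genuine gap.

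The gap is the cofinality step. You write $w=\mathrm{colim}_{i\in I}u_i$ for the canonical $\kappa$-filtered slice $I=(\cC^{[1]})_\kappa/w$, apply $F$, and then assert that since $F(w)\in W_{\cC'}$ and $W_{\cC'}$ is $\kappa$-accessibly embedded in $\cC'^{[1]}$, the set $J=\{i\in I: F(u_i)\in W_{\cC'}\}$ is cofinal in $I$, ``this being the content of the accessibility of $W_{\cC'}$ at level $\kappa$.'' That is not what accessibility of $W_{\cC'}$ says. After raising $\kappa$ appropriately, $\kappa$-accessibility of $W_{\cC'}$ does give cofinality of $(W_{\cC'})_\kappa/F(w)$ inside the canonical slice $(\cC'^{[1]})_\kappa/F(w)$: any $\kappa$-compact map $v\to F(w)$ factors through a $\kappa$-compact object of $W_{\cC'}$. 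But your diagram $\{F(u_i)\}_{i\in I}$ is the $F$-image of the slice over $w$, \emph{not} the canonical slice over $F(w)$, and the factorization through an object of $W_{\cC'}$ furnished by accessibility downstairs in $\cC'$ has no reason to lift along $F$ to a factorization upstairs in $\cC$. To conclude cofinality of $J$ in $I$ you would in effect already need to know that $W_\cC$ is $\kappa$-accessibly embedded in $\cC^{[1]}$, which is what is to be proved. As a cautionary illustration of how far astray a non-canonical filtered presentation can go: the sequential diagram $\mathbb{Z}\oplus\mathbb{Z}/2\to\mathbb{Z}\oplus\mathbb{Z}/2\to\cdots$ with transition maps $(n,a)\mapsto(n,0)$ has colimit $\mathbb{Z}$, a torsion-free group, yet no term of the diagram is torsion-free, even though the torsion-free abelian groups form an $\omega$-accessible, filtered-colimit-closed subcategory of $\mathrm{Ab}$.

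The paper itself offers no proof; it cites \cite{L1} Corollary A.2.6.12, \cite{AR} Remark 2.50, and \cite{Be} Proposition 1.18. The substance of those references is precisely what your argument is missing: the non-trivial theorem of Makkai--Par\'e that pseudopullbacks of accessible categories along accessible functors are accessible (see \cite{AR}~2.53--2.54). Concretely, $W_\cC$, regarded as a full subcategory of $\cC^{[1]}$, is the pseudopullback of $\cC^{[1]}\xrightarrow{\;F^{[1]}\;}\cC'^{[1]}\hookleftarrow W_{\cC'}$; that theorem makes it accessible, and together with your correct verification of closure under filtered colimits this yields axiom (iv). If you replace the cofinality step by an invocation of that result, the rest of your proof stands.
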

\begin{thm}[\cite{L1} Corollary A.2.6.13, see also \cite{Be}]\label{2.3.4}
Let $\cC$ be a locally presentable category, $W$ be a perfect class of morphisms of $\cC$, and $I$ be a small set of morphisms of $\cC$. If these satisfy the following conditions, $\cC$ admits a left proper combinatorial model structure such that the class of cofibrations is the (weakly) saturated class generated by $I$ and that of weak equivalences is $W$.
\begin{enumerate}
\renewcommand{\labelenumi}{(\roman{enumi})}
\item Suppose that we are given a diagram
\[\xymatrix{X\ar[d]_f\ar[r]&X'\ar[d]\ar[r]^g&Y\ar[d]\\
Z\ar[r]&Z'\ar[r]_{g'}&V,}\] 
where the squares are pushout diagrams. Then if $f\in I$ and $g\in W$, $g'$ belongs to $W$.
\item If $f$ has the right lifting property with respect to $I$, $f$ belongs to $W$.
\end{enumerate}
\end{thm}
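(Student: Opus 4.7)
The plan is to deduce this from Jeff Smith's recognition theorem for combinatorial model structures, which is the standard route taken in \cite{L1} and \cite{Be}. I would set the cofibrations $C$ to be the weakly saturated class generated by $I$, the weak equivalences to be $W$, and the fibrations to be the class of maps with the right lifting property against $C\cap W$. With these classes fixed, the two-out-of-three axiom and retract closure for $W$ are immediate from the hypothesis that $W$ is perfect (conditions (i) and (ii) of the definition), while retract closure for $C$ and for the fibrations is automatic from their characterizations via lifting.

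Next I would establish the two factorization axioms. Applying Quillen's small object argument to $I$ (possible because $\cC$ is locally presentable, so the domains of maps in $I$ are small) gives a functorial factorization of any morphism as an $I$-cell complex followed by an $I$-injective map. Hypothesis (ii) ensures that $I$-injectives lie in $W$, so these serve as trivial fibrations. The harder factorization, into a trivial cofibration followed by a fibration, requires producing a small set $J\subseteq C\cap W$ whose right lifting class agrees with that of $C\cap W$, so that the small object argument applied to $J$ yields the desired factorization. This is the content of Smith's trick and is the main obstacle. Its proof combines the accessibility of $W$ secured by property (iv) of the definition of perfect (so that $W$ is determined by a small generating family under filtered colimits) with the closure of $C\cap W$ under pushouts and transfinite composition, which ultimately rest on hypothesis (i) together with the filtered-colimit stability of $W$.

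Granted the sets $I$ and $J$, the lifting axioms fall out by the standard argument. Trivial fibrations coincide with $I$-injectives, so cofibrations lift against trivial fibrations by construction. Conversely, applying the (trivial cofibration, fibration) factorization to a trivial cofibration and using the two-out-of-three property, one identifies every trivial cofibration as a retract of a $J$-cell complex, hence as a map with the left lifting property against fibrations.

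Finally, for left properness, one must show that the pushout of a map $w\in W$ along a cofibration lies in $W$. By expressing a cofibration as a retract of a transfinite composition of pushouts of maps in $I$, and invoking filtered-colimit stability of $W$ at the colimit step together with retract closure, this reduces to the case of a single pushout of $w$ along a pushout of a map in $I$, which is precisely hypothesis (i). Combinatoriality of the resulting structure is automatic from the fact that both $I$ and $J$ are small.
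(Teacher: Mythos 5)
The paper does not prove Theorem \ref{2.3.4}; it is quoted verbatim (up to notation) from Lurie's Higher Topos Theory, Corollary A.2.6.13, and Beke's paper, and used as a black box. Your sketch is therefore not being compared against a proof in this paper but against the cited sources, and it does faithfully reproduce the standard route: Jeff Smith's recognition principle, with the small object argument on $I$ giving the (cofibration, trivial fibration) factorizations via hypothesis (ii), ``Smith's trick'' producing the generating set $J$ of trivial cofibrations from the accessibility of $W$ (perfectness condition (iv)) together with closure of $C\cap W$ under pushout and transfinite composition (resting on hypothesis (i) and filtered-colimit stability), and left properness by reducing, through cellular presentations of cofibrations and filtered-colimit stability of $W$, to the single-pushout case that is exactly hypothesis (i).

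One imprecision worth flagging: you attribute retract closure of $W$ to ``conditions (i) and (ii) of the definition'' of a perfect class. Those conditions are that $W$ contains the isomorphisms and satisfies two-out-of-three; neither gives retract closure directly. In Lurie's development, retract closure of $W$ is a separate consequence of perfectness (essentially of the accessibility hypothesis (iv) together with (iii); see the lemmas preceding HTT Proposition A.2.6.8), or alternatively one observes that once the model structure is constructed by other means, retract closure of $W$ follows from the remaining axioms. As stated, your appeal to (i) and (ii) is a misattribution, though not a fatal gap because the needed closure does hold and is established in the sources. The rest of the outline is sound at the level of a sketch, and correctly identifies Smith's trick as the crux.
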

\begin{note}\label{2.3.5}
For a dg $k$-module $M$ and a homogeneous element $m\in M$ we will denote the degree of $m$ by $\overline{m}$.
\end{note}
\begin{cons}[shifts, \cite{P1} 5.2.1]\label{2.3.6}
Let $V$ be a weak $(\cA,K)$-module, and $n$ be an integer. Then we set a new weak $(\cA,K)$-module $V\left[n\right]$ by taking the shift $\left[n\right]$ both as a dg $\cA$-module and as a complex of $K$-modules. Namely, the complex $V\left[n\right]$ is defined as follows:
\begin{itemize}
\item $(V\left[n\right])^m=V^{m+n}$ for every integer $m$;
\item $d_{V\left[n\right]}(v)=(-1)^n d_V(v)$ for $v\in V$;
\item $\nu_{V\left[n\right]}(k)v=\nu_V(k)v$ for any $v\in V$ and $k\in K$;
\item $\pi_{V\left[n\right]}(a)v=(-1)^{\overline{a}n}\pi_V(a)v$ for any $v\in V$ and $a\in\cA$.
\end{itemize}
This is actually an $(\cA,K)$-module if so is $V$.
\end{cons}
\begin{cons}[mapping cones, \cite{P1} 5.2.1]\label{2.3.7}
We construct mapping cones of (weak) $(\cA,K)$-modules. Let $f:X\to Y$ be a homomorphism of weak $(\cA,K)$-modules. Then its mapping cone $\Cone(f)$ is defined as follows:
\begin{itemize}
\item $\Cone(f)=Y\oplus X\left[1\right]$ as a graded $K$-module;
\item $d_{\Cone(f)}(y,x)=(d_Y(y)+f(x),-d_X(x))$ for $(y,x)\in\Cone(f)$;
\item $\pi_{\Cone(f)}(a)(y,x)=(\pi_Y(a)y,(-1)^{\overline{a}}\pi_X(a)x)$ for any $x\in X$, $y\in Y$, and $a\in\cA$.
\end{itemize}
This is actually an $(\cA,K)$-module if so are $X$ and $Y$. If we are given a (weak) $(\cA,K)$-module $V$ we set $\Cyl(X)$ as the mapping cone of the map $V\to V\oplus V$ determined by the identity map $id_V$ and its minus $-id_V$. Explicitly, this is given as follows:
\begin{itemize}
\item $\Cyl(X)=X\oplus X\left[1\right]\oplus X$ as a graded $K$-module;
\item $d_{\Cyl(X)}(x,y,z)=(d_X(x)+y,-d_Y(x),d_X(z)-y)$;
\item $\pi_{\Cyl(X)}(a)(x,y,z)=(\pi_X(a)x,(-1)^{\overline{a}}\pi_X(a)y,\pi_X(a)z)$.
\end{itemize}
\end{cons}
\begin{rem}\label{2.3.8}
In \cite{H2} we will generalize these constructions above and characterize the resulting objects universally in the context of dg categories (\cite{BK}).
\end{rem}
\begin{lem}\label{2.3.9}
If a homomorphism $p:X\to Y$ of (weak) $(\cA,K)$-modules has the right lifting property with respect to every acyclic injection then $p$ is surjective.
\end{lem}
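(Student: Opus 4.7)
The plan is to factor $p\colon X\to Y$ through a surjection from an acyclic module and then invoke the lifting property to realize $p$ as a retract of that surjection. Specifically, I will construct an acyclic (weak) $(\cA,K)$-module $\tilde Y$ together with an epimorphism $q\colon \tilde Y\to Y$. Then $0\hookrightarrow \tilde Y$ is trivially a monomorphism, and acyclicity of $\tilde Y$ makes it an acyclic injection; the square with top row $0\to X$, bottom row $q$, and left edge $0\hookrightarrow \tilde Y$ will, by the hypothesis on $p$, admit a lift $F\colon \tilde Y\to X$. Since $p\circ F=q$ is surjective, so is $p$.

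The natural candidate for $\tilde Y$ is the mapping cone of the identity of the shift, namely $\tilde Y:=\Cone(\mathrm{id}_{Y[-1]})$. By Constructions \ref{2.3.6} and \ref{2.3.7} this is a (weak) $(\cA,K)$-module, and as the cone of an isomorphism it is acyclic as a complex of $k$-modules; in particular $0\hookrightarrow \tilde Y$ is an acyclic injection. As a graded object $\tilde Y=Y[-1]\oplus Y$, and the candidate surjection $q$ is the second-coordinate projection $(a,b)\mapsto b$, obtained by identifying $(Y[-1])[1]$ with $Y$. This is patently a degreewise epimorphism, and a short computation using the sign conventions of Constructions \ref{2.3.6} and \ref{2.3.7} (in which the shift sign on $Y[-1]$ is cancelled by the sign appearing on the $[1]$-summand of the cone) shows that $q$ commutes with the differential and with the $\cA$- and $K$-actions; hence $q$ is a morphism of (weak) $(\cA,K)$-modules.

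With $\tilde Y$ and $q$ in hand, the right lifting property of $p$ against the acyclic injection $0\hookrightarrow \tilde Y$ produces, from the square
\[
\xymatrix{
0 \ar[r]\ar[d] & X \ar[d]^{p} \\
\tilde Y \ar[r]_{q} & Y,
}
\]
a morphism $F\colon \tilde Y\to X$ satisfying $p\circ F=q$. Surjectivity of $p$ then follows from surjectivity of $q$.

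The only mild obstacle is the routine sign bookkeeping needed to verify that $q$ is indeed a homomorphism of (weak) $(\cA,K)$-modules; beyond that, the argument is an immediate application of the hypothesis to a cleverly chosen acyclic replacement of $Y$.
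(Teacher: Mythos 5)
Your proof is correct and follows essentially the same approach as the paper: apply the right lifting property of $p$ against the acyclic injection $0\hookrightarrow$ (acyclic cone on $Y$) to factor a surjection onto $Y$ through $p$. The paper uses $\Cone(\mathrm{id}_Y)[-1]$ with its canonical projection to $Y$, whereas you use the isomorphic object $\Cone(\mathrm{id}_{Y[-1]})$ with its second-coordinate projection; this is a cosmetic difference only.
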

\begin{proof}
Let $p:X\to Y$ be a map of (weak) $(\cA,K)$-modules with the right lifting property with respect to every acyclic injection. Then the canonical projection
\[\Cone(id_Y)\left[-1\right]\to Y\]
lifts to a map $\Cone(id_Y)\left[-1\right]\to X$ since $\Cone(id_Y)$ is acyclic. The surjectivity now follows.
\[\xymatrix{&X\ar[d]^p\\
\Cone(id_Y)\left[-1\right]\ar[r]\ar@{-->}[ru]&Y
}\]
\end{proof}
\begin{proof}[proof of Theorem \ref{2.3.1}]
We check the conditions in Theorem \ref{2.3.4} to apply it. Here we consider the category $(\cA,K)\cmod$. The proof of the theorem for $(\cA,K)\cmod_w$ goes in the same way.

Firstly, recall that $(\cA,K)\cmod$ is locally presentable (Corollary \ref{2.1.2}), and notice that there exists a generator of the class of monomorphisms as a saturated class (Theorem \ref{2.1.1}, \cite{Be} Proposition 1.12 and Remark 1.13).

Secondly, we show that the class of quasi-isomorphisms of $(\cA,K)\cmod$ is perfect. Let $k\cmod^\heartsuit$ denote the category of $k$-modules, and
$H^n:(\cA,K)\cmod\to k\cmod^\heartsuit$ the $n$th cohomology functor for each integer $n$. Then the functor
\[\prod_n H^n:(\cA,K)\cmod\to\prod_nk\cmod^\heartsuit\]
commutes with filtered colimits since those of $k\cmod^\heartsuit$ are exact. If we set $W_{\prod k\cmod^\heartsuit}$ as the collection of isomorphisms, $W=(\prod_nH^n)^{-1}W_{\prod k\cmod^\heartsuit}$ is the collection of quasi-isomorphisms. Since $\prod k\cmod^\heartsuit$ is locally presentable, $W_{\prod k\cmod^\heartsuit}$ is perfect (Example \ref{2.3.2}). Therefore $W$ is also perfect (Proposition \ref{2.3.3}).

Thirdly, we check that a map $p:X\to Y$ with the right lifting property with respect to the generator of the class of monomorphisms is a quasi-isomorphism. From \cite{L1} Corollary A.1.2.7, it is equivalent to proving that a map with the right lifting property with respect to all injective homomorphisms is a quasi-isomorphism. Notice that $p$ is a surjective map. In fact, $p$ has a section since $p$ has the left lifting property with respect to the zero map $0\to Y$. Therefore it also suffices to prove that every injective object $I\in(\cA,K)\cmod$ is acyclic since $\Ker p$ is injective. Since $I$ is injective, the natural injective homomorphism $I\to \Cone(id_I)$ admits a retract $r:\Cone(id_I)\to I$. Since $\Cone(id_I)$ is acyclic, so is $I$.

To complete the proof it will suffice to show that pushouts (resp.\ pullbacks) of quasi-isomorphisms along injective (resp.\ surjective) homomorphisms are again quasi-isomorphisms (Lemma \ref{2.3.9}). These follow from \cite{Hov2} Corollary 1.4.
\end{proof}
The following lemma is obvious by definition. This says that we have the unbounded derived functors $\bR I^{\cB,L}_{\cA,K,w}$ and $\bR I^{\cB,L}_{\cA,K}$.
\begin{lem}\label{2.3.10}
The functors $\cF_{\cB,L,w}^{\cA,K}$, $\cF_{\cB,L}^{\cA,K}$ are left Quillen with respect to the injective model structures.
\end{lem}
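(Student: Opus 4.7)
The plan is to verify the definition of a left Quillen functor directly. Recall that a left Quillen functor is a left adjoint that preserves cofibrations and acyclic cofibrations. In our situation the cofibrations of the injective model structure (Theorem \ref{2.3.1}) are the monomorphisms and the weak equivalences are the quasi-isomorphisms, so it actually suffices to show that $\cF^{\cA,K}_{\cB,L,w}$ and $\cF^{\cA,K}_{\cB,L}$ are left adjoints and that each of them preserves both monomorphisms and quasi-isomorphisms.

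For the adjointness, I would invoke Remark \ref{2.1.3}: the categories of (weak) $(\cA,K)$- and $(\cB,L)$-modules are locally presentable by Corollary \ref{2.1.2}, and the forgetful functors preserve all small colimits, since on both sides colimits are computed in $k\cmod$. The adjoint functor theorem thus furnishes right adjoints, which the previous paper \cite{H1} identifies with $I^{\cB,L}_{\cA,K,w}$ and $I^{\cB,L}_{\cA,K}$.

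For the two preservation properties, I would argue from the definitions. Because finite limits in $(\cA,K)\cmod_{(w)}$ and $(\cB,L)\cmod_{(w)}$ are computed in $k\cmod$ (\cite{H1} Corollary 2.3.9 (1)), a map in either category is a (categorical) monomorphism precisely when its underlying map of $k$-complexes is injective; likewise, the notion of quasi-isomorphism is by definition that of the underlying complex of $k$-modules (Theorem \ref{2.3.1} (W)). The forgetful functors leave the underlying $k$-complex and its differential unchanged, so they tautologically preserve both classes. Combining these observations gives the left Quillen property, and hence (by Ken Brown's lemma) the existence of the right derived functors $\bR I^{\cB,L}_{\cA,K,w}$ and $\bR I^{\cB,L}_{\cA,K}$.

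No genuine obstacle arises; as the author remarks, the statement is essentially a bookkeeping check. The only subtlety worth emphasising is that one must cite the local-presentability and colimit-computation results from Corollary \ref{2.1.2} and \cite{H1} to justify both the existence of right adjoints and the transfer of the monomorphism/quasi-isomorphism conditions through $\cF$.
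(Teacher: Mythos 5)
Your proposal is correct and matches the paper's (implicit) reasoning: the paper simply declares the lemma ``obvious by definition,'' and the content you spell out — that $\cF^{\cA,K}_{\cB,L,w}$ and $\cF^{\cA,K}_{\cB,L}$ are left adjoints of $I^{\cB,L}_{\cA,K,w}$ and $I^{\cB,L}_{\cA,K}$, and that they tautologically preserve monomorphisms and quasi-isomorphisms since these are detected on the underlying $k$-complex, which the forgetful functor leaves unchanged — is exactly the bookkeeping being elided. No gap, same route.
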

As a close of this section, we give an example of a cylinder object, and compute a left homotopy with respect to it. For a dg algebra $\cA$ we denote its underlying graded algebra by $\cA^\sharp$. For (weak) $(\cA,K)$-modules $X,Y$ an $(\cA^\sharp,K)$-module homomorphism is a graded $K$-module homomorphism $X\to Y$ such that it is also a graded $\cA^\sharp$-module homomorphism. A chain homotopy of two homomorphisms $f,g:X\to Y$ of (weak) $(\cA,K)$-modules is a graded $(\cA^\sharp,K)$-homomorphism $h:X\to Y\left[-1\right]$ such that $h\circ d_M+d_N\circ h=f-g$. If there exists a chain homotopy of maps $f$ and $g$ then we will say that $f$ and $g$ are chain homotopic.
\begin{prop}\label{2.3.11}
Let $(X,\pi_X,\nu_X)$ be a (weak) $(\cA,K)$-module. Then $\Cyl(X)$ with the diagram
\[\xymatrix{X\oplus X\ar[rr]^+\ar[rd]&&X\\&\Cyl(X)\ar[ru]&}\]
is a cylinder object of $X$ in the sense of \cite{Hov1} Definition 1.2.4, where $X\oplus X\to\Cyl(X)$ is the natural embedding map, $\Cyl(X)\to X$ is the summation map of the two sides, and $\Cyl(X) \to X$ is given by summing up the two sided components of $\Cyl(X)$. Moreover, a left homotopy with respect to this cylinder object provides a chain homotopy of (weak) $(\cA,K)$-modules, and conversely a left homotopy with respect to this cylinder object is constructed from a chain homotopy. 
\end{prop}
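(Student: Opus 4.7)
The plan is to verify Hovey's definition of a cylinder object (\cite{Hov1} Definition 1.2.4) for the given factorization and then to set up an explicit bijection between left homotopies $H\colon\Cyl(X)\to Y$ and chain homotopies $h\colon X\to Y[-1]$ by parametrizing $H$ via its restriction to the middle summand $X[1]\subset\Cyl(X)$.

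First I would verify the factorization, the cofibration condition, and the weak equivalence condition. The embedding $X\oplus X\to\Cyl(X)$ sends $(x_1,x_2)\mapsto(x_1,0,x_2)$, and the summation $(x,y,z)\mapsto x+z$ is a chain map because the middle contributions cancel: $(d_X(x)+y)+(d_X(z)-y)=d_X(x+z)$. The composite is manifestly the fold map, and the embedding is a monomorphism, hence a cofibration in the injective model structure of Theorem \ref{2.3.1}. To see that the summation is a quasi-isomorphism, I would identify its kernel with $\Cone(\mathrm{id}_X)$: its elements are of the form $(x,y,-x)$, and the assignment $(x,y,-x)\mapsto(x,y)$ is compatible with the signs of Constructions \ref{2.3.6} and \ref{2.3.7} for both the differential and the $\cA$-action, yielding an isomorphism $\Ker\cong\Cone(\mathrm{id}_X)$ of weak $(\cA,K)$-modules. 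Since $\Cone(\mathrm{id}_X)$ is acyclic (as already used in the proof of Lemma \ref{2.3.9}), the long exact cohomology sequence finishes this step.

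For the correspondence, any morphism $H\colon\Cyl(X)\to Y$ of weak $(\cA,K)$-modules restricting to $f$ and $g$ on the two copies of $X$ is necessarily of the form $H(x_1,y,x_2)=f(x_1)+h(y)+g(x_2)$, where $h\colon X[1]\to Y$ is the restriction to the middle summand. Unwinding the $\cA^\sharp$-linearity of $H$ produces the relation $h(\pi_X(a)v)=(-1)^{\overline{a}}\pi_Y(a)h(v)$, which by the sign conventions of Construction \ref{2.3.6} is exactly the condition that $h$ be a graded $(\cA^\sharp,K)$-homomorphism $X\to Y[-1]$. Evaluating the chain-map identity $H\circ d_{\Cyl(X)}=d_Y\circ H$ on a vector $(0,y,0)$ then gives
\[f(y)-g(y)=d_Y(h(y))+h(d_X(y)),\]
which is the defining equation of a chain homotopy in the sense recalled immediately before the proposition. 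Reversing this calculation recovers $H$ from $h$, giving the inverse construction.

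The main obstacle is purely the sign bookkeeping: one must track the twists $(-1)^{\overline{a}n}$ and $(-1)^n$ from Construction \ref{2.3.6} carefully enough that the $\cA^\sharp$-linearity of $H$ as a map into $Y$ matches the $\cA^\sharp$-linearity of $h$ as a map into $Y[-1]$. Once this check is in place, the non-weak case follows at no extra cost, since $\Cyl(X)$ is an $(\cA,K)$-module whenever $X$ is by Constructions \ref{2.3.6} and \ref{2.3.7}, and $(\cA,K)\cmod$ is a full subcategory of $(\cA,K)\cmod_w$ by Lemma \ref{1.1.2}.
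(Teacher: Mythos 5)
Your proposal is correct and follows essentially the same route as the paper: you parametrize a morphism $H\colon\Cyl(X)\to Y$ restricting to $(f,g)$ on the two outer copies of $X$ by its restriction $h$ to the middle summand $X[1]$, check that $\cA^\sharp$-linearity of $H$ is equivalent to $h$ being a graded $(\cA^\sharp,K)$-homomorphism $X\to Y[-1]$ under the sign twists of Construction \ref{2.3.6}, and then evaluate the chain-map identity on $(0,y,0)$ to obtain $f-g=d_Yh+hd_X$. The only difference is that the paper dismisses the first half (the verification that $\Cyl(X)$ is a cylinder object) as obvious, whereas you carry out that check explicitly — identifying the kernel of the summation map with $\Cone(\mathrm{id}_X)$ and invoking its acyclicity — which is a correct elaboration of what the paper leaves implicit rather than a different argument.
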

\begin{proof}
The first half of the assertion is obvious. We show the other half. Here we consider weak $(\cA,K)$-modules with $(\cA,K)$ a weak pair for simplicity. Our arguments below also hold for $(\cA,K)$-modules.

Let $f,g:X\to Y$ be maps of weak $(\cA,K)$-modules. Notice that a graded $(\cA^\sharp,K)$-module homomorphism $H:\Cyl(X)\to Y$ such that the composition
\[X\oplus X\to\Cyl(X)\to Y\]
is $(f,g)$ is uniquely determined by a graded $(\cA^\sharp,K)$-module homomorphism $h:X\to Y\left[-1\right]$ and
\[H^n(x,y,z)=f^n(x)+h^{n+1}(y)+g^n(z)\]
for integers $n$. Furthermore, our $H:\Cyl(X)\to Y$ commutes with the differential if and only if the corresponding map $h$ is a chain homotopy from $f$ to $g$. In fact, we have
\[d^{n-1}_Y\circ H^{n-1}\circ\left(\begin{array}{ccc}
0\\
id\\
0\end{array}
\right)=d^{n-1}_Y\circ h^n\]
and
\[H^n\circ d^{n-1}_{\Cyl(X)}\circ\left(\begin{array}{ccc}
0\\
id\\
0\end{array}
\right)=-h^{n+1}\circ d^n_X+f^n-g^n\]
as maps from $X^n$ to $Y^n$ since $f$ and $g$ commutes with the differential.
\end{proof}
\subsection{The projective model structure I}
In the rest of this paper we assume that the base ring $k$ is a field of characteristic zero and that affine group schemes $K$ are reductive algebraic groups. In this section we study fibrations of our injective model structure on $K\cmod$.

We say that an abelian category $\cA$ is semisimple if every object of $\cA$ is injective. Thanks to Theorem \ref{1.2.10} we deduce the following model structure:
\begin{cor}\label{2.4.1}
Let $\cA$ be a bicomplete semisimple abelian category. Then there exists a model structure on $C(\cA)$ which is described as follows:
\begin{enumerate}
\item[(C)]A map is a cofibration if and only if it is a monomorphism.
\item[(F)]A map is a fibration if and only if it is an epimorphism.
\item[(W)]A map is a weak equivalence if and only if it is a quasi-isomorphism.
\end{enumerate}
\end{cor}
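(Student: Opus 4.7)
The plan is to identify the desired model structure with the one produced by Theorem \ref{1.2.10} applied to $\cA$. Since $\cA$ is bicomplete abelian, that theorem gives a model structure on $C(\cA)$ whose cofibrations are the degreewise split monomorphisms, whose fibrations are the degreewise split epimorphisms, and whose weak equivalences are the homotopy equivalences. I therefore only need to show that these three classes coincide with (C), (F), (W) when $\cA$ is semisimple.

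First I would handle cofibrations and fibrations together: in a semisimple abelian category every short exact sequence splits, so any monomorphism in $\cA$ admits a retraction and any epimorphism admits a section. Choosing such splittings degreewise shows that every monomorphism (resp.\ epimorphism) of $C(\cA)$ is automatically a degreewise split monomorphism (resp.\ epimorphism), and the converse directions are trivial.

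The substantive step is to match weak equivalences, i.e., to show that in $C(\cA)$ a chain map $f:X\to Y$ is a homotopy equivalence if and only if it is a quasi-isomorphism. The ``only if'' direction is formal and holds in any abelian category. For ``if'', the cone $\Cone(f)$ is acyclic, so it suffices to prove that every acyclic complex $Z^\bullet$ is contractible; the standard matrix-calculus argument then converts a contracting homotopy on $\Cone(f)$ into a homotopy inverse of $f$. To produce the contracting homotopy on an acyclic $Z^\bullet$ I would use semisimplicity to split the inclusion $\ker d^n\hookrightarrow Z^n$ in each degree, obtaining $Z^n=(\ker d^n)\oplus W^n$; acyclicity then makes the restriction $d^n|_{W^n}:W^n\to\ker d^{n+1}$ an isomorphism, and its inverse (extended by zero on $W^{n+1}$) assembles into the desired degree $-1$ contracting homotopy.

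The only mild obstacle I anticipate lies in this last step: the chosen splittings, the contracting homotopy, and the induced homotopy inverse of $f$ are constructed purely in the underlying graded category and are not a priori compatible with the differentials, so one must verify the identity $dh+hd=\mathrm{id}$ and check that the resulting matrix entries on $\Cone(f)$ package into an actual chain-level homotopy inverse. Semisimplicity reduces both verifications to routine calculations, after which Theorem \ref{1.2.10} supplies all the remaining model-categorical content.
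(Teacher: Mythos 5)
Your proposal is correct and is essentially the paper's own (implicit) argument: the paper derives Corollary~\ref{2.4.1} from Theorem~\ref{1.2.10} and leaves the identification of the three classes to semisimplicity, which is exactly what you spell out. The ``mild obstacle'' you flag is not really one: the contracting homotopy built from the degreewise splittings of $\ker d^n \hookrightarrow Z^n$ does satisfy $dh+hd=\mathrm{id}$ by the routine check you sketch, so the argument closes cleanly.
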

\begin{cor}\label{2.4.2}
Let $K$ be a reductive algebraic group over $k$. Then there is a proper combinatorial symmetric monoidal model structure on $K\cmod$ such that the following conditions are satisfied: 
\begin{enumerate}
\item[(C)]A map is a cofibration if and only if it is injective.
\item[(F)]A map is a fibration if and only if it is surjective.
\item[(W)]A map is a weak equivalence if and only if it is a quasi-isomorphism.
\end{enumerate}
In particular, every dg $K$-module is both fibrant and cofibrant. We sometimes call it also the projective model structure.
\end{cor}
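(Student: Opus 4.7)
The plan is to recognize $K\cmod$ as $C(K\cmod^\heartsuit)$, where $K\cmod^\heartsuit$ denotes the abelian category of rational $K$-representations over $k$ concentrated in a single degree, and then invoke Corollary \ref{2.4.1}. First I would check that the hypotheses are met: the category $K\cmod^\heartsuit$ is bicomplete (it is a Grothendieck abelian category), and, since $K$ is reductive and $k$ has characteristic zero, every rational $K$-representation is completely reducible, so every object of $K\cmod^\heartsuit$ is injective; that is, $K\cmod^\heartsuit$ is semisimple in the sense of the previous subsection. It is moreover closed symmetric monoidal under $\otimes_k$ with diagonal $K$-action and internal hom given by the conjugation action on $\Hom_k(-,-)$.

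Applying Corollary \ref{2.4.1} together with the closed-monoidal half of Theorem \ref{1.2.10}, I obtain a symmetric monoidal model structure on $K\cmod$ with cofibrations, fibrations, and weak equivalences exactly the monomorphisms, epimorphisms, and quasi-isomorphisms, as required. Properness is automatic because every object is simultaneously cofibrant (the unique map $0\to X$ is injective) and fibrant (the unique map $X\to 0$ is surjective), which also verifies the final sentence of the corollary.

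The remaining point is combinatoriality, which Corollary \ref{2.4.1} does not assert. My approach is to compare the model structure just constructed with the injective model structure on $K\cmod$ produced by Theorem \ref{2.3.1} for the weak pair $(k,K)$. That model structure is combinatorial, and its cofibrations and weak equivalences are the monomorphisms and quasi-isomorphisms, i.e., exactly the classes of (C) and (W) above. Since a model structure on a fixed category is determined by its classes of cofibrations and weak equivalences, the two structures coincide, and in particular the one of Corollary \ref{2.4.2} is combinatorial.

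The step I expect to need the most care is the very first one: translating reductivity of $K$ in characteristic zero into the statement that $K\cmod^\heartsuit$ is a semisimple abelian category in the precise sense introduced in the previous subsection, and checking that the monoidal structure on $K\cmod^\heartsuit$ really is closed (so that Theorem \ref{1.2.10} delivers a monoidal, not merely a bare, model structure). Neither is genuinely difficult, but both must be in place before the combinatorial symmetric monoidal model structure of the statement drops out from the machinery already developed.
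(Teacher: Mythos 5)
Your proposal is correct and follows essentially the same route the paper's own development suggests: apply Corollary~\ref{2.4.1} (itself an instance of Theorem~\ref{1.2.10}) to the semisimple Grothendieck category $K\cmod^\heartsuit$ of rational $K$-representations, read off properness from the fact that every object is fibrant and cofibrant, and obtain combinatoriality and the symmetric monoidal axiom by identifying the resulting structure with the injective model structure of Theorem~\ref{2.3.1} for the weak pair $(k,K)$, since the two share cofibrations and weak equivalences. One small imprecision worth noting: the internal hom of $K\cmod^\heartsuit$ is not literally $\Hom_k(-,-)$ with conjugation action (that need not be a rational representation) but rather its largest rational subrepresentation; this does not affect the argument, since the category of $\cO(K)$-comodules is closed symmetric monoidal, which is all that Theorem~\ref{1.2.10} requires.
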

\begin{cor}\label{2.4.3}
There is a proper combinatorial model structure on the category of monoids of $K\cmod$ such that the following conditions are satisfied:
\begin{enumerate}
\item[(F)]A map is a fibration if and only if the underlying homomorphism of dg vector spaces is surjective.
\item[(W)]A map is a weak equivalence if and only if it is a quasi-isomorphism.
\end{enumerate}
\end{cor}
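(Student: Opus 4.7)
The plan is to apply the transfer theorem (Theorem \ref{1.2.5}) to the free--forgetful adjunction
\[T : K\cmod \rightleftarrows \mathrm{Mon}(K\cmod) : U,\]
where $T(V)=\bigoplus_{n\geq 0}V^{\otimes n}$ is the tensor-algebra functor and $K\cmod$ carries the projective model structure of Corollary \ref{2.4.2}. By construction, fibrations and weak equivalences in the transferred model structure are those detected by $U$, matching the required description. Combinatoriality then follows because $\mathrm{Mon}(K\cmod)$ is locally presentable (it is the category of algebras for a finitary monad on a locally presentable category; cf.\ \cite{AR}) and a set of generating (trivial) cofibrations is obtained by applying $T$ to those of $K\cmod$.

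Next I would verify the monoid axiom of \cite{SS} for $K\cmod$ in a strong form: a trivial cofibration is an injective quasi-isomorphism, and tensoring over the field $k$ with any dg $K$-module preserves both injectivity and quasi-isomorphisms, so $V\otimes u$ is a trivial cofibration for every object $V$ and every trivial cofibration $u$. Pushouts and transfinite composites of such maps are again trivial cofibrations because $K\cmod$ is an abelian category in which filtered colimits are exact. Consequently, the hypothesis of Theorem \ref{1.2.5} reduces, via the small object argument, to the claim that for every trivial cofibration $u:X\to Y$ in $K\cmod$ and every pushout square
\[\xymatrix{T(X)\ar[r]\ar[d]_{T(u)} & A\ar[d] \\ T(Y)\ar[r] & A'}\]
in $\mathrm{Mon}(K\cmod)$, the underlying map $U(A)\to U(A')$ is a quasi-isomorphism.

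I would establish this claim using the standard Schwede--Shipley filtration of $U(A')$ as a sequential colimit $U(A)=P_0\to P_1\to\cdots\to U(A')$ in $K\cmod$, in which each $P_{n-1}\to P_n$ is a pushout of a map built out of iterated tensor products of $A$ interleaved with $n$ copies of $u$. The monoid axiom ensures each such map is a trivial cofibration, and trivial cofibrations are closed under pushouts and transfinite composition, so the composite $U(A)\to U(A')$ is itself a trivial cofibration, in particular a quasi-isomorphism. This establishes existence of the model structure, and its combinatoriality is automatic.

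Right properness is immediate: $U$ creates fibrations and weak equivalences and preserves pullbacks as a right adjoint, while $K\cmod$ is right proper by Corollary \ref{2.4.2}. For left properness one applies the analogous filtration to pushouts of general cofibrations of monoids along arbitrary weak equivalences; since every object of $K\cmod$ is cofibrant and tensoring over $k$ preserves quasi-isomorphisms, each stage of the filtration remains a weak equivalence, and the transfinite composite is again a weak equivalence. The main obstacle is this Schwede--Shipley pushout analysis: although classical, it requires careful bookkeeping to interleave the monoid structure on $A$ with the (trivial) cofibration $u$, and to track both the cofibration and the quasi-isomorphism properties simultaneously when deducing left properness.
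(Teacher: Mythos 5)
Your argument is essentially an unfolding of what the paper compresses into two citations: the paper's entire proof reads ``Use \cite{SS} Theorem~4.1~(3) and \cite{L2} Proposition~4.1.4.3,'' and the Schwede--Shipley theorem is proved by precisely the free--forgetful transfer plus filtration-of-pushouts argument you spell out, while the Lurie reference supplies combinatoriality. You additionally make the properness claim explicit (right properness via $U$ creating fibrations and preserving pullbacks, left properness via the same filtration together with the fact that tensoring over a field preserves quasi-isomorphisms and that every object of $K\cmod$ is cofibrant); the paper's citations do not actually address properness, so this is a welcome supplement rather than a deviation, and the approach is otherwise the same.
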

\begin{proof}
Use \cite{SS} Theorem 4.1 (3) and \cite{L2} Proposition 4.1.4.3.
\end{proof}
As a variant we consider the category $K\cmod^{\leq 0}$ of dg $K$-modules concentrated in nonpositive degrees.
\begin{prop}\label{2.4.4}
There is a proper combinatorial symmetric monoidal model structure on $K\cmod^{\leq 0}$ such that the following conditions are satisfied:
\begin{enumerate}
\item[(C)]A map is a cofibration if and only if it is injective.
\item[(F)]A map is a fibration if and only if it is surjective in each negative degree.
\item[(W)]A map is a weak equivalence if and only if it is a quasi-isomorphism.
\end{enumerate}
In particular, every dg $K$-module is both fibrant and cofibrant.
\end{prop}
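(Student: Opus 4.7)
The plan is to apply Theorem \ref{2.3.4} to $\cC = K\cmod^{\leq 0}$ with $W$ the class of quasi-isomorphisms, following the template of the proof of Theorem \ref{2.3.1} and Corollary \ref{2.4.2}, but with a set of generating cofibrations tailored so that the resulting fibrations coincide with the maps that are surjective in each negative degree (rather than in every degree, as in Corollary \ref{2.4.2}).

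The first task is to verify that $K\cmod^{\leq 0}$ is locally presentable. The inclusion $\iota:K\cmod^{\leq 0}\hookrightarrow K\cmod$ preserves and creates small limits and colimits, since it is simultaneously left adjoint to the good truncation $\tau^{\leq 0}$ and right adjoint to the stupid truncation $\sigma^{\leq 0}$; a generating set is obtained from $V_\lambda\otimes S^n$ with $V_\lambda$ ranging over a set of representatives of simple $K$-modules and $n\leq 0$. The cohomology functors $H^n$ for $n\leq 0$ commute with filtered colimits, so the same argument as in the proof of Theorem \ref{2.3.1} shows that $W$ is perfect. As generating cofibrations I would take
\[
I=\{V_\lambda\otimes S^n\hookrightarrow V_\lambda\otimes D^{n+1}: n\leq -1\}\cup\{0\hookrightarrow V_\lambda\otimes S^0\},
\]
indexed over the $V_\lambda$; the weakly saturated closure of $I$ is the class of all monomorphisms, by degreewise complete reducibility of $K$-modules. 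Condition (i) of Theorem \ref{2.3.4} is \cite{Hov2} Corollary 1.4, while condition (ii) follows as in the proof of Theorem \ref{2.3.1}: a map with the right lifting property against $I$ admits a section and has injective kernel, and the retraction-from-the-cone argument of Lemma \ref{2.3.9} then forces the kernel to be acyclic.

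The crucial new step is to identify the fibrations with the maps surjective in each negative degree. Let $J=\{0\hookrightarrow V_\lambda\otimes D^n:n\leq 0\}$; each element is an acyclic monomorphism, and a direct lifting analysis shows that a map has the right lifting property against $J$ if and only if it is surjective in each negative degree. One must verify that $J$ is a set of generating acyclic cofibrations, i.e., that any such map lifts against every acyclic monomorphism $i:A\hookrightarrow B$. The quotient $B/A$ is an acyclic complex in $K\cmod^{\leq 0}$; by the reductivity of $K$ and $\mathrm{char}\,k=0$, one splits $Z^n(B/A)=B^n(B/A)$ as a $K$-submodule in each degree and builds a $K$-equivariant contracting chain homotopy on $B/A$. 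Combining this homotopy with surjectivity in negative degrees produces the desired lift, constructed inductively on the degree.

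Finally, right properness follows from the long exact sequence of cohomology applied to a pullback of a quasi-isomorphism along a map surjective in each negative degree, and the symmetric monoidal structure is the standard tensor product of cochain complexes, which preserves the nonpositivity condition. Every object is cofibrant ($0\hookrightarrow X$ is a monomorphism) and fibrant ($X\to 0$ is trivially surjective in every degree), so the pushout-product and unit axioms reduce to the corresponding statements over $K\cmod$, which hold by degreewise complete reducibility together with the Künneth formula. The main obstacle is the fibration identification in the previous paragraph: producing a lift against a general acyclic monomorphism from the elementary lifts encoded in $J$ is precisely where the reductivity of $K$ and $\mathrm{char}\,k=0$ enter essentially, via the existence of the $K$-equivariant contracting homotopy.
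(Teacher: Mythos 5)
Your proposal follows a genuinely different route from the paper. The paper's proof is extremely terse: the model structure itself is obtained by invoking Bousfield (\cite{Bo} 4.4), which handles the restriction to nonpositive degrees in one stroke; properness is then immediate from every object being fibrant--cofibrant, symmetric monoidalness is inherited from Corollary~\ref{2.4.2}, and combinatoriality is settled by a perfectness argument as in Theorem~\ref{2.3.1} together with \cite{L1} Corollary~A.2.6.13. You instead try to build the model structure from scratch via Theorem~\ref{2.3.4} (and implicitly Theorem~\ref{2.5.1}, since you also specify generating acyclic cofibrations $J$), with explicit cell sets. This is a perfectly reasonable strategy and has the virtue of making the generating (acyclic) cofibrations visible, which the paper does not do; but it requires more care at the boundary in degree $0$ than your writeup supplies.

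Two places need repair. First, condition~(ii) of Theorem~\ref{2.3.4}: you assert that the ``retraction-from-the-cone argument of Lemma~\ref{2.3.9}'' shows the kernel is acyclic. But for $N\in K\cmod^{\leq 0}$ the object $\Cone(id_N)$ (and likewise $\Cone(id_Y)[-1]$ appearing in Lemma~\ref{2.3.9}) lives in degrees $\leq 1$, hence is not an object of $K\cmod^{\leq 0}$, so the argument as cited does not carry over. The correct replacement is to test injectivity against the monomorphisms $S^{n}\hookrightarrow D^{n}$ for $n\leq 0$ (i.e. including the degree-$n$ cycle of the disk): an extension of a map $S^n\to N$ across $D^{n}$ exhibits the chosen cocycle as a coboundary, which forces $H^n(N)=0$ for all $n\leq 0$. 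Note this is exactly the point where (F) becomes ``surjective in each \emph{negative} degree'' rather than in every degree, and your writeup does not flag where the degree-$0$ asymmetry actually enters. Second, the identification of fibrations via $J$: checking that a map surjective in negative degrees has the right lifting property against \emph{every} acyclic monomorphism $A\hookrightarrow B$ in $K\cmod^{\leq 0}$ is the substantive step, and you give only a sketch (``build a $K$-equivariant contracting homotopy on $B/A$ and lift inductively''). This can be completed using degreewise semisimplicity and the fact that in nonpositive degrees acyclicity of $B/A$ forces $(B/A)^0$ to be entirely in the image of $d^{-1}$, but as written it is an outline rather than a proof, and it is precisely the part the paper outsources to Bousfield. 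Neither gap is fatal — both can be filled — but they are the genuine content of the statement and should be carried out, not cited by analogy to unbounded arguments that do not literally apply.
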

\begin{proof}
The model structure is obtained by applying \cite{Bo} 4.4. Since every dg $K$-module is both fibrant and cofibrant, this model structure is proper. The symmetric monoidalness follows from Corollary \ref{2.4.2}

To make this model structure combinatorial, observe that the collection of monomorphisms of $K\cmod^{\leq 0}$ is perfect by a similar argument to the proof of Theorem \ref{2.3.1}. The assertion now follows from \cite{L1} Corollary A.2.6.13.
\end{proof}
Finally, we transfer this model structure to the category $\CAlg(K\cmod^{\leq 0})$ of commutative monoids of $K\cmod^{\leq 0}$. For this, let us recall the notion of power cofibrations (\cite{L2} Definition 4.5.4.2). Given a pair of homomorphisms $f:A\to A'$ and $g:B\to B'$ of $K\cmod^{\leq 0}$, let $f\wedge g$ denote the induced map
\[(A\otimes B')\oplus_{A\otimes B}(A'\otimes B)\to A'\otimes B'.\]
For a cofibration $f:A\to B$ we iterate this operation to obtain a map
\[\wedge^n f:\Box^n(f)\to B^{\otimes n}.\]
Moreover, this is $\fS_n$-equivariant for the natural actions of $\fS_n$ on the target and the domain, where $\fS_n$ denotes the symmetric group of degree $n$. Let us regard $\fS_n$ as a groupoid, and put the projective model structure on the category $(K\cmod^{\leq 0})^{\fS_n}$ of diagrams of $K\cmod^{\leq 0}$ indexed by $\fS_n$. We will say that $f$ is a power cofibration if for every $n\geq 0$ the morphism $\wedge^n f$ is a cofibration of $(K\cmod^{\leq 0})^{\fS_n}$.
\begin{cor}\label{2.4.5}
There is a right proper symmetric monoidal model structure on the category of commutative monoids of $K\cmod^{\leq 0}$ such that the following conditions are satisfied:
\begin{enumerate}
\item[(F)]A map is a fibration if and only if the underlying homomorphism of dg vector spaces is surjective.
\item[(W)]A map is a weak equivalence if and only if it is a quasi-isomorphism.
\end{enumerate}
\end{cor}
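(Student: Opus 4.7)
The plan is to transfer the model structure of Proposition \ref{2.4.4} along the free-forgetful adjunction between $K\cmod^{\leq 0}$ and $\CAlg(K\cmod^{\leq 0})$, applying Lurie's criterion for lifting a symmetric monoidal model structure to commutative algebras (\cite{L2} Theorem 4.5.4.10). This criterion requires that the underlying symmetric monoidal model category be combinatorial with cofibrant unit and that every cofibration be a power cofibration in the sense recalled just above the statement. Combinatoriality, the symmetric monoidal structure, and cofibrancy of the unit (indeed of every object) are already contained in Proposition \ref{2.4.4}, so the essential content of the proof is verifying the power cofibration condition.

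For this, identify $(K\cmod^{\leq 0})^{\fS_n}$ with the category $(K\times\fS_n)\cmod^{\leq 0}$ of dg modules over the product algebraic group concentrated in nonpositive degrees. Since $k$ has characteristic zero and both $K$ and the finite group $\fS_n$ are reductive, the product $K\times\fS_n$ is reductive, and Proposition \ref{2.4.4} applied to it endows $(K\times\fS_n)\cmod^{\leq 0}$ with a model structure whose cofibrations are precisely the monomorphisms. The fibrations and weak equivalences of this structure coincide with those detected by restriction to $K\cmod^{\leq 0}$, so it agrees with the projective model structure on $(K\cmod^{\leq 0})^{\fS_n}$. Now given any cofibration $f:A\to B$ of $K\cmod^{\leq 0}$ and any $n\geq 0$, the iterated pushout-product $\wedge^n f:\Box^n(f)\to B^{\otimes n}$ is $\fS_n$-equivariant by construction, and its underlying morphism in $K\cmod^{\leq 0}$ is a monomorphism by iterating the pushout-product axiom (which holds because the structure on $K\cmod^{\leq 0}$ is symmetric monoidal and every object is cofibrant). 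Hence $\wedge^n f$ is a cofibration of $(K\cmod^{\leq 0})^{\fS_n}$, proving that $f$ is a power cofibration.

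With the hypothesis verified, the transfer theorem supplies the desired model structure on $\CAlg(K\cmod^{\leq 0})$, with fibrations and weak equivalences detected by the forgetful functor to $K\cmod^{\leq 0}$. Right properness descends from Proposition \ref{2.4.4}: every object of $K\cmod^{\leq 0}$ is fibrant, hence so is every commutative monoid in the transferred structure, and since pullbacks in $\CAlg(K\cmod^{\leq 0})$ are created by the forgetful functor, right properness transfers from the underlying category. The main obstacle in this plan is the power cofibration step, which is precisely the point where characteristic zero and the reductivity of $K$ are essential; without these hypotheses, the equivariant category $(K\cmod^{\leq 0})^{\fS_n}$ would fail to admit a model structure in which every monomorphism is a cofibration, and the standard positive-characteristic obstruction to lifting a model structure to commutative algebras would reappear.
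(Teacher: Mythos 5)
Your proposal is correct and follows essentially the same route as the paper: invoke Lurie's criterion for lifting to commutative algebras, reduce to verifying that every cofibration is a power cofibration, and establish this by identifying $(K\cmod^{\leq 0})^{\fS_n}$ with $(K\times\fS_n)\cmod^{\leq 0}$ and appealing to reductivity of $K\times\fS_n$ (via Proposition~\ref{2.4.4}) so that cofibrations are exactly monomorphisms. The paper compresses the final step to ``the assertion is now obvious'' and cites \cite{L2} Proposition 4.5.4.6, whereas you spell out the comparison of the transferred and projective model structures on $(K\cmod^{\leq 0})^{\fS_n}$ (they agree because fibrations and weak equivalences coincide), the iterated pushout-product argument showing $\wedge^n f$ is a monomorphism, and the deduction of right properness from fibrancy of every object; these are genuine elaborations of steps the paper leaves implicit. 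One small point: check the Lurie reference number --- the paper cites Proposition 4.5.4.6 rather than Theorem 4.5.4.10 --- though this does not affect the mathematics.
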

\begin{proof}
In view of \cite{L2} Proposition 4.5.4.6 it will suffice to show that every cofibration is actually a power cofibration. We may identify the model category $(K\cmod^{\leq 0})^{\fS_n}$ with the category $K\times\fS_n\cmod^{\leq 0}$ equipped with the model structure of Corollary \ref{2.4.4} since $K\times\fS_n$ is reductive. The assertion is now obvious.
\end{proof}
\subsection{The projective model structure II}
From the previous section on, we assume that the ground ring $k$ is a field of characteristic $0$, and that all the group schemes $K$ in this section are reductive. We generalize Corollary \ref{2.4.2} to put the so-called projective model structure on $(\cA,K)\cmod_w$ and $(\cA,K)\cmod$. We recall some key ways to construct model structures.
\begin{thm}[\cite{Hov1} Theorem 2.1.19]\label{2.5.1}
Let $\cC$ be a locally presentable category, and $W$ be a collection of morphisms of $\cC$. We also let $I$ and $J$ be small sets of morphisms of $\cC$, whose domains and targets are small. Then there is a combinatorial model structure on $\cC$ with $I$ as a generator of the class of cofibrations, $J$ as a set of a generator of trivial cofibrations, and $W$ as the subcategory of weak equivalences if and only if the following conditions are satisfied:
\begin{enumerate}
\renewcommand{\labelenumi}{(\roman{enumi})}
\item The class $W$ has the two-out-of-three property and is closed under
retracts.
\item $({}_\bot J)_\bot\subset W\cap({}_\bot I)_\bot$.
\item ${}_\bot I\subset W\cap{}_\bot J$.
\item Either $W\cap({}_\bot I)_\bot\subset({}_\bot J)_\bot$ or $W\cap {}_\bot J\subset {}_\bot I$.
\end{enumerate}
\end{thm}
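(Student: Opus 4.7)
The plan is to follow Hovey's recognition principle. The ``only if'' direction is the easier one: in any cofibrantly generated model category with generators $I$ (for cofibrations) and $J$ (for trivial cofibrations), one has cofibrations $= ({}_\bot I)_\bot$, trivial cofibrations $= ({}_\bot J)_\bot$, fibrations $= {}_\bot J$, and trivial fibrations $= {}_\bot I$; conditions (i)--(iv) then follow from Quillen's axioms (retract closure, two-out-of-three, and the lifting axioms), the final alternative in (iv) being a restatement of the fact that (trivial) cofibrations have the left lifting property against (trivial) fibrations.

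For the ``if'' direction, I would first define the prospective cofibrations to be $({}_\bot I)_\bot$, the prospective fibrations to be ${}_\bot J$, and weak equivalences to be $W$. The smallness hypothesis on the domains of $I$ and $J$, together with local presentability of $\cC$, allows one to run Quillen's small object argument to obtain two functorial factorizations: any map factors as an $I$-cell composite followed by a map in ${}_\bot I$, and as a $J$-cell composite followed by a map in ${}_\bot J$. Closure of cells under pushouts and transfinite composition places the left factors in $({}_\bot I)_\bot$ and $({}_\bot J)_\bot$ respectively; by (iii) the former right factor lies in $W$ (and in ${}_\bot J$), and by (ii) the latter left factor lies in $W$ (and in $({}_\bot I)_\bot$). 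Retract closure of model-categorical classes is automatic for those defined by lifting properties, and for $W$ is condition (i).

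The crux, which I expect to be the main obstacle, is verifying the coincidence $W\cap({}_\bot I)_\bot = ({}_\bot J)_\bot$ (equivalently, $W\cap {}_\bot J = {}_\bot I$); this is where condition (iv) is used. Assume the second alternative of (iv) for concreteness, and let $f\in W\cap({}_\bot I)_\bot$. Factor $f=qj$ with $j\in({}_\bot J)_\bot$ and $q\in{}_\bot J$; by (ii), $j\in W$, so by two-out-of-three $q\in W\cap {}_\bot J$, hence $q\in{}_\bot I$ by (iv). Since $f$ is in $({}_\bot I)_\bot$, it has the left lifting property against $q$, producing a diagonal filler that exhibits $f$ as a retract of $j$; closure of $({}_\bot J)_\bot$ under retracts then gives $f\in({}_\bot J)_\bot$, as required. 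The other alternative in (iv) is handled symmetrically.

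Once these two class coincidences are established, the lifting axioms of a model category become immediate from the definitions of $({}_\bot I)_\bot$ and ${}_\bot J$, the factorization axioms are the outputs of the small object argument, and combinatoriality follows from the fact that $\cC$ is locally presentable together with the smallness of the generating sets.
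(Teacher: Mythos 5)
Your sketch is correct and reproduces Hovey's own argument for Theorem~2.1.19 of \emph{Model Categories}: the ``only if'' direction is a direct reading of the lifting characterizations of the four classes, and the ``if'' direction runs the small object argument for the two factorizations and then uses the retract argument (factor an $f\in W\cap({}_\bot I)_\bot$ through the $J$-factorization, apply two-out-of-three and one alternative of~(iv), and lift $f$ against the resulting right factor to exhibit $f$ as a retract of the left factor) to obtain the class coincidences $W\cap({}_\bot I)_\bot=({}_\bot J)_\bot$ and $W\cap{}_\bot J={}_\bot I$. The paper itself gives no proof of this statement --- it is quoted verbatim from Hovey as an external input --- so there is nothing further to compare against; the only caution worth recording is that the notational convention for $S_\bot$ and ${}_\bot S$ set out in Section~2.2 of the paper is, read literally, the reverse of what the theorem requires (to match Hovey one must read ${}_\bot S$ as the maps with the right lifting property against $S$ and $S_\bot$ as those with the left lifting property), and your sketch correctly uses the intended reading rather than the literal one.
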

\begin{cor}[transfer of a model structure, \cite{GS} Theorem 3.6]\label{2.5.2}
Let $\cC$ be a combinatorial model category, $\cD$ a locally presentable category, and let $(F,U):\cC\to\cD$ be an adjunction. Suppose that if a map $f$ of $\cD$ has the left lifting property with respect to every map $p$ of $\cD$ with $U(p)$ a fibration, then $U(f)$ is a weak equivalence. Then $\cD$ admits a model structure satisfying the following conditions:
\begin{enumerate}
\item[(F)]A map $p$ of $\cD$ is a fibration if and only if $U(p)$ is a fibration.
\item[(W)]A map $w$ of $\cD$ is a weak equivalence if and only if $U(w)$ is a weak equivalence.
\end{enumerate}
\end{cor}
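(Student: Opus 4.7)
The plan is to reduce to Theorem~\ref{2.5.1} by transferring the generating sets of $\cC$ across the left adjoint $F$. Specifically, since $\cC$ is combinatorial one may choose small sets $I_\cC$ and $J_\cC$ generating, respectively, the cofibrations and the trivial cofibrations of $\cC$; set $I:=F(I_\cC)$ and $J:=F(J_\cC)$ inside $\cD$, and let $W$ consist of those morphisms $f$ of $\cD$ for which $U(f)$ is a weak equivalence of $\cC$. Every object of the locally presentable category $\cD$ is small, so the smallness hypothesis required by Theorem~\ref{2.5.1} is automatic.

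The key technical input is the standard adjunction identity: a morphism $p$ of $\cD$ lies in ${}_\bot I$ (respectively ${}_\bot J$) if and only if $U(p)$ is a trivial fibration (respectively a fibration) in $\cC$. This translates the classes defined by right lifting against the generators $I$ and $J$ in $\cD$ into classes defined by the existing model structure on $\cC$, and drives all of the subsequent verifications.

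With these identifications in hand, I then verify conditions (i)--(iv) of Theorem~\ref{2.5.1}. Condition (i), closure of $W$ under retracts and the two-out-of-three property, is inherited from $\cC$ via functoriality of $U$. Condition (iii), ${}_\bot I\subset W\cap {}_\bot J$, follows because every trivial fibration of $\cC$ is both a fibration and a weak equivalence. Condition (iv), in its second form $W\cap {}_\bot J\subset {}_\bot I$, reduces to the familiar fact that a fibration of $\cC$ which is also a weak equivalence is a trivial fibration. Condition (ii), $({}_\bot J)_\bot\subset W\cap ({}_\bot I)_\bot$, splits in two parts: the inclusion $({}_\bot J)_\bot\subset ({}_\bot I)_\bot$ follows by contravariance from ${}_\bot I\subset {}_\bot J$, while the inclusion $({}_\bot J)_\bot\subset W$ is exactly the hypothesis of the corollary, reread via the identification of ${}_\bot J$ with $\{p:U(p)\text{ is a fibration in }\cC\}$.

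The main point, rather than a genuine obstacle, is this $W$-inclusion in condition (ii): the hypothesis of the corollary is crafted precisely so as to yield it after the adjunction identification, and every other axiom is a formal consequence of the adjunction together with the usual Quillen identities in $\cC$. Once Theorem~\ref{2.5.1} produces the model structure on $\cD$, the characterisations (F) and (W) follow immediately from the definitions of $J$ and $W$; combinatoriality and hence the existence of functorial factorisations are built in since $I$ and $J$ are small sets with presentable domains and codomains.
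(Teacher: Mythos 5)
Your proposal is correct and follows essentially the same route as the paper: both reduce to Theorem~\ref{2.5.1} with generating sets $F(I_\cC)$, $F(J_\cC)$ and $W = U^{-1}(\text{weak equivalences})$, and both hinge on the adjunction identity that ${}_\bot F(S)$ consists of those $p$ with $U(p)\in{}_\bot S$. You merely spell out the verifications of conditions (iii), (iv), and the two halves of (ii) a bit more explicitly than the paper, which simply declares them obvious once the adjunction identity and hypothesis are in place.
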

\begin{proof}
We set $I$ (resp. $J$) as a small set generating the class of cofibrations (resp.\ trivial cofibrations) of $\cC$. Let $F(I)$ (resp.\ $F(J)$) denote the set of the images of the maps of $I$ (resp.\ $J$) under $F$, and also let $W$ be the collection of maps of $\cD$ whose images under $U$ are weak equivalences of $\cC$. Then there is a combinatorial model structure on $\cD$ with $F(I)$ as a generator of the class of cofibrations, $F(J)$ as a generator of the class of trivial cofibrations and $W$ as the subcategory of weak equivalences. In fact, it suffices to check that they satisfy the conditions of Theorem \ref{2.5.1}.

The condition (i) is obvious since the collection of weak equivalences satisfies the two-out-of-three property and the stability under retracts. To prove the rest, observe that for a collection $S$ of morphisms of $\cC$, ${}_\bot F(S)$ precisely consists of the maps $p$ with $U(p)\in {}_\bot S$, where $F(S)$ is the image of $S$ under $F$. In particular, ${}_\bot F(I)$ (resp.\ ${}_\bot F(J)$) consists of maps $p$ in $\cD$ with $U(p)$ a trivial fibration (resp.\ a fibration). Part (ii) is just what we assumed. The conditions (iii) and (iv) are now obvious. This complete the proof.
\end{proof}
We are now ready to construct the projective model structure below.
\begin{thm}\label{2.5.3}
If we are given a (weak) pair $(\cA,K)$ with $K$ reductive the category of (weak) $(\cA,K)$-modules admits a proper combinatorial model structure satisfying the following conditions:
\begin{enumerate}
\item[(F)]A map is a fibration if and only if it is surjective.
\item[(W)]A map is a weak equivalence if and only if it is a quasi-isomorphism.
\end{enumerate}
\end{thm}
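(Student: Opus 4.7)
My plan is to apply the transfer theorem (Corollary \ref{2.5.2}) along the adjunction $F \dashv U$, where $U$ denotes the forgetful functor from (weak) $(\cA,K)$-modules to $K\cmod$ and $F$ is its left adjoint, with $K\cmod$ equipped with the projective model structure of Corollary \ref{2.4.2}. The target of the transfer is locally presentable by Corollary \ref{2.1.2}. In the weak case, $F$ coincides with $P^{\cA,K}_{k,K,w} = \cA \otimes_k -$ by Lemma \ref{1.1.1}; in the non-weak case $F$ exists by the adjoint functor theorem, and factors through the localization from Lemma \ref{1.1.2}.

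The heart of the argument is the verification of the transfer hypothesis: if $f$ has the left lifting property with respect to every $p$ such that $U(p)$ is surjective, then $U(f)$ is a quasi-isomorphism. Given any surjection $i$ in $K\cmod$, the morphism $F(i)$ is surjective after applying $U$, since $F$ preserves surjections as a left adjoint between abelian categories, and surjections in (weak) $(\cA,K)\cmod$ are detected by $U$ (cokernels are computed in $k\cmod$). Hence $f$ has the left lifting property with respect to $F(i)$, and by the adjunction $F \dashv U$ this translates into $U(f)$ having the left lifting property with respect to $i$ in $K\cmod$. Since $i$ was an arbitrary surjection, $U(f)$ has the left lifting property with respect to every fibration of the model structure of Corollary \ref{2.4.2}; as every object there is bifibrant, such a morphism is an acyclic cofibration, in particular a quasi-isomorphism.

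For properness, I would argue as follows. Right properness is inherited from $k\cmod$, in which pullbacks of quasi-isomorphisms along surjections are quasi-isomorphisms and in which limits of (weak) $(\cA,K)$-modules are computed. For left properness, the generators of cofibrations are the images $F(i)$ of generating cofibrations $i$ of $K\cmod$; these are monomorphisms, since $\cA \otimes_k -$ is exact over the field $k$ and, in the non-weak case, the localization is exact because $(\cA,K)\cmod$ is a Serre subcategory of $(\cA,K)\cmod_w$. Monomorphisms in the abelian categories of Theorem \ref{2.1.1} are closed under pushouts, transfinite compositions, and retracts, so every projective cofibration is an injective cofibration in the sense of Theorem \ref{2.3.1}, and left properness descends from that of the injective model structure. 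The same inclusion of cofibrations, combined with the identity of weak equivalences, shows that the identity functor is a left Quillen functor from the projective to the injective model structure, and induces an isomorphism on homotopy categories -- hence is a Quillen equivalence, as claimed.

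The main obstacle is the transfer condition, which the adjunction trick in the second paragraph reduces to the already-established statement in $K\cmod$; once this is in hand the remaining verifications (properness, combinatoriality, Quillen equivalence) are essentially formal consequences of the inclusion of projective cofibrations into monomorphisms.
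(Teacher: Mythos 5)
Your overall plan---transfer the projective model structure on $K\cmod$ along the free/forgetful adjunction---is the same as the paper's, and your observations about properness and the Quillen equivalence (both deducible from the fact that projective cofibrations are monomorphisms) are fine. The gap is in your verification of the transfer hypothesis, and it is not repairable as stated. You assert that since $f$ has the left lifting property with respect to $F(i)$, ``by the adjunction $F\dashv U$ this translates into $U(f)$ having the left lifting property with respect to $i$.'' This is not what the adjunction gives. Transposing a lifting square via $\Hom(F(-),-)\cong\Hom(-,U(-))$ shows that a map $g$ of $K\cmod$ has the left lifting property against $U(p)$ if and only if $F(g)$ has the left lifting property against $p$; that is, the adjunction moves $F$ onto the cofibration side and $U$ onto the fibration side, never the reverse. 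To convert a lifting square for $U(f)$ against $i$ into one for $f$ against $F(i)$ you would need to turn a map $U(A)\to C$ into a map $A\to F(C)$, and the counit $FU(A)\to A$ points the wrong way. So the conclusion that $U(f)$ has the left lifting property against all surjections of $K\cmod$ does not follow.

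The correct verification never needs to descend to $K\cmod$. By Lemma~\ref{2.3.9}, the fibrations of the injective model structure on $(\cA,K)\cmod_w$ (Theorem~\ref{2.3.1}) are surjective, since they have the right lifting property against acyclic monomorphisms. Therefore a map $f$ with the left lifting property against every surjection in particular has it against every injective fibration, hence $f$ is an acyclic cofibration for the injective model structure, hence a quasi-isomorphism---which is precisely the statement that $U(f)$ is a weak equivalence. This is the paper's argument, and it is genuinely different in structure from yours: it stays entirely inside $(\cA,K)\cmod$, whereas your attempt to pass the lifting data back down to $K\cmod$ would require a false principle about adjunctions.
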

\begin{proof}
This will be obtained by transferring the projective model structure on $K\cmod$ via the standard adjunction $(P^{\cA,K}_{k,K,w}$, $\cF^{k,K}_{\cA,K,w}$) of Lemma \ref{1.1.3}. In view of Theorem \ref{2.3.1} and Lemma \ref{2.3.9} a map $f$ of $(\cA,K)\cmod_w$ satisfying the left lifting property with respect to all surjective maps is a quasi-isomorphism. In particular, the assumption of Corollary \ref{2.5.2} is satisfied, so that the desired model structure is obtained.
By a similar argument we can obtain the desired structure on $(\cA,K)\cmod$ with $(\cA,K)$ arbitrary by transfer of the projective model structure on $K\cmod$ by $\cF^{U(\fk),K}_{\cA,K}$ (\cite{H1} Example 2.2.4 (5)).

The properness follows by a similar argument to Theorem \ref{2.3.1}.
\end{proof}
As an application let us consider the functors $P^{\cB,L}_{\cA,K,w}$ and $P^{\cB,L}_{\cA,K}$. The next assertion immediately follows from Proposition \ref{1.1.4}:
\begin{lem}\label{2.5.4}
\begin{enumerate}
\renewcommand{\labelenumi}{(\arabic{enumi})}
\item For a weak map of weak pairs
\[(\cA,K)\to(\cB,L)\]
with $K$ and $L$ reductive, the associated functors $P^{\cB,L}_{\cA,K,w}$ is left Quillen with respect to the projective model structures.
\item For a map of pairs
\[(\cA,K)\to(\cB,L)\]
with $K$ and $L$ reductive, the associated functors $P^{\cB,L}_{\cA,K}$ is left Quillen with respect to the projective model structures.
\end{enumerate}
As a consequence we obtain the unbounded derived functors $\bL P^{\cB,L}_{\cA,K,w}$ and $\bL P^{\cB,L}_{\cA,K}$.
\end{lem}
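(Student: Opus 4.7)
The plan is to exploit the adjoint characterization of left Quillen functors together with the fortuitous description of the projective model structure of Theorem \ref{2.5.3}. Recall that a left adjoint $F$ between model categories is left Quillen if and only if its right adjoint $G$ preserves fibrations and trivial fibrations. So my strategy is to dualize the problem and check the conditions on the right adjoints of $P^{\cB,L}_{\cA,K,w}$ and $P^{\cB,L}_{\cA,K}$.

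The key input is Proposition \ref{1.1.4}, which furnishes exact right adjoints $I^{\cB,L}_{\cA,K,w}$ and $I^{\cB,L}_{\cA,K}$ to these production functors. Under the projective model structure of Theorem \ref{2.5.3}, a fibration is exactly a surjection and a weak equivalence is exactly a quasi-isomorphism, so a trivial fibration is a surjective quasi-isomorphism. Now an exact additive functor between abelian categories preserves short exact sequences; in particular it preserves epimorphisms, so it takes surjections to surjections, and it commutes with passage to cohomology, so it takes quasi-isomorphisms to quasi-isomorphisms. Therefore the right adjoints preserve both fibrations and trivial fibrations, which is precisely what is needed.

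Concretely, I would execute the argument in the following order: first, spell out that exactness of a $k$-linear functor between Grothendieck abelian categories of complexes is equivalent to preservation of finite limits and finite colimits, and hence preservation of kernels and cokernels in each degree, so it preserves both epimorphisms and $H^n$ for every $n$; second, apply this to $I^{\cB,L}_{\cA,K,w}$ (resp.\ $I^{\cB,L}_{\cA,K}$) to conclude preservation of the fibrations and trivial fibrations of Theorem \ref{2.5.3}; third, invoke the adjunction criterion to deduce that $P^{\cB,L}_{\cA,K,w}$ (resp.\ $P^{\cB,L}_{\cA,K}$) is left Quillen; finally, appeal to the general fact that a left Quillen functor between model categories admits a total left derived functor to produce $\bL P^{\cB,L}_{\cA,K,w}$ and $\bL P^{\cB,L}_{\cA,K}$.

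There is essentially no serious obstacle because the heavy lifting has already been done in Proposition \ref{1.1.4} and Theorem \ref{2.5.3}. The only mild subtlety worth noting is that this argument critically relies on the projective description of fibrations as surjections and weak equivalences as quasi-isomorphisms; the same exactness input would not suffice to show $P^{\cB,L}_{\cA,K}$ is left Quillen in the injective model structure of Theorem \ref{2.3.1}, because there fibrations are characterized by a lifting property rather than by a degreewise condition that exact functors automatically respect. It is therefore exactly the matching of ``exact right adjoint'' with ``fibration = surjection'' that makes the lemma a one-line consequence of earlier results.
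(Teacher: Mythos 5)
Your proof is correct and is essentially the same argument the paper intends: the paper's proof is literally the one line "The next assertion immediately follows from Proposition \ref{1.1.4}," and what you have written out is precisely the intended unpacking — the exact right adjoints from Proposition \ref{1.1.4} preserve surjections and quasi-isomorphisms, hence the fibrations and trivial fibrations of Theorem \ref{2.5.3}, so the adjunction is Quillen and $\bL P^{\cB,L}_{\cA,K,w}$, $\bL P^{\cB,L}_{\cA,K}$ exist.
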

Let $M$ be a weak $(\cA,K)$-module. Then set $\Path(M)$ as the $\left[-1\right]$-shift of the mapping cone of the homomorphism $M\oplus M\to M$ determined by $-id_M$ and $id_M$. With the diagonal map to the two sides $M\to\Path(M)$ and the projection map $\Path(M)\to M\times M$, $\Path(M)$ is a path object of $M$. Moreover, if $M$ is an $(\cA,K)$-module, so is $\Path(M)$. We are then able to compute right homotopies with respect to this path object in a similar way to Proposition \ref{2.3.11}.
\begin{prop}\label{2.5.5}
A right homotopy with respect to the path object constructed above provides a chain homotopy of (weak) $(\cA,K)$-modules. Conversely a right homotopy with respect to this path object is constructed from a chain homotopy.
\end{prop}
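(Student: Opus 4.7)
The proof will mirror the structure of Proposition \ref{2.3.11} via the adjunction ``shift $\leftrightarrow$ mapping cone $\leftrightarrow$ shift''. The plan is as follows.

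First I would unwind the definition of $\Path(Y)$ explicitly. Combining Constructions \ref{2.3.6} and \ref{2.3.7}, as a graded $K$-module one has $\Path(Y)^m = Y^{m-1}\oplus Y^m\oplus Y^m$, and a direct bookkeeping of the signs in Construction \ref{2.3.7} (applied to the map $(-\mathrm{id}_Y,\mathrm{id}_Y):Y\oplus Y\to Y$) composed with the $[-1]$-shift yields the differential
\[d_{\Path(Y)}(y,\alpha,\beta)=\bigl(-d_Y(y)+\alpha-\beta,\,d_Y(\alpha),\,d_Y(\beta)\bigr),\]
together with $\pi_{\Path(Y)}(a)(y,\alpha,\beta)=\bigl((-1)^{\overline{a}}\pi_Y(a)y,\,\pi_Y(a)\alpha,\,\pi_Y(a)\beta\bigr)$. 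Under these formulas the diagonal $Y\to\Path(Y)$, $m\mapsto(0,m,m)$, and the projection $\Path(Y)\to Y\times Y$, $(y,\alpha,\beta)\mapsto(\alpha,\beta)$, are chain maps, and the composition is indeed the diagonal of $Y$, so the path-object structure is well-defined.

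Next, given $(\cA,K)$-modules $X,Y$ and maps $f,g:X\to Y$, I would observe that a graded $(\cA^\sharp,K)$-homomorphism $H:X\to\Path(Y)$ factoring the pair $(f,g)$ through the projection $\Path(Y)\to Y\times Y$ is the same datum as a single graded $(\cA^\sharp,K)$-homomorphism $h:X\to Y[-1]$, via the formula
\[H^n(v)=\bigl(h^n(v),\,f^n(v),\,g^n(v)\bigr).\]
The compatibility of $h$ with the $\cA^\sharp$-action translates (using the sign $(-1)^{\overline{a}}$ in $\pi_{\Path(Y)}$) into exactly the condition that $h$ is a graded $(\cA^\sharp,K)$-map $X\to Y[-1]$ in the sense recalled before Proposition \ref{2.3.11}.

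Finally I would compute when such an $H$ commutes with the differentials. The second and third components of $d_{\Path(Y)}\circ H=H\circ d_X$ simply recover the fact that $f$ and $g$ are chain maps, while the first component yields
\[-d_Y\circ h^n+f^n-g^n=h^{n+1}\circ d_X,\]
i.e.\ $h\circ d_X+d_Y\circ h=f-g$, which is precisely the chain-homotopy equation. Conversely, given any chain homotopy $h$ from $f$ to $g$, the same formula for $H$ defines a right homotopy. The main obstacle is purely bookkeeping: one must track the sign $(-1)^{n}$ introduced by the $[-1]$-shift of Construction \ref{2.3.6} through the differential and action on $\Cone(-\mathrm{id}_Y,\mathrm{id}_Y)$ so that the signs in the first component of $d_{\Path(Y)}$ produce $+f-g$ rather than $-f+g$ (or another incorrect combination). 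Once this computation is carried out, the assertion follows with no further input, exactly in parallel with the proof of Proposition \ref{2.3.11}.
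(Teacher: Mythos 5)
Your proposal is correct and takes precisely the route the paper intends: the paper gives no explicit argument for Proposition~\ref{2.5.5} beyond the remark that the computation is ``similar to Proposition~\ref{2.3.11},'' and you have carried out that dualized computation in full. Your unwinding of the definition of $\Path(Y)$ (the graded decomposition $Y^{m-1}\oplus Y^m\oplus Y^m$, the differential $(-d_Y(y)+\alpha-\beta,\,d_Y(\alpha),\,d_Y(\beta))$, and the action $((-1)^{\overline{a}}\pi_Y(a)y,\,\pi_Y(a)\alpha,\,\pi_Y(a)\beta)$) correctly tracks the signs coming from the $[-1]$-shift of Construction~\ref{2.3.6} and the cone of Construction~\ref{2.3.7}, and the resulting identity $h\circ d_X+d_Y\circ h=f-g$ matches the paper's definition of chain homotopy; your sign discipline is the crux and you have it right.
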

It is immediate from Lemma \ref{2.3.9} that the two kinds of model structures we constructed in this paper are equivalent:
\begin{thm}\label{2.5.6}
Suppose that $k$ is a field of characteristic $0$, and $(\cA,K)$ be a weak pair with $K$ reductive. Then the identity functor from $(\cA,K)\cmod_w$ with the injective model structure to $(\cA,K)\cmod_w$ with the projective model structure is a right Quillen equivalence. Similarly, for a pair $(\cA,K)$ with $K$ reductive, the identity functor from $(\cA,K)\cmod$ with the injective model structure to $(\cA,K)\cmod$ with the projective model structure is a Quillen equivalence.
\end{thm}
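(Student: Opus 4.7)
The plan is to reduce the statement to two observations: (a) every fibration in the injective model structure is surjective, and (b) the two model structures share the same class of weak equivalences. From (a) one gets that the identity functor is a right Quillen functor, and (b) then promotes the resulting Quillen adjunction to a Quillen equivalence automatically.

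First I would check that the identity from the injective side to the projective side is right Quillen, equivalently that the identity in the opposite direction is left Quillen. Let $p$ be a fibration in the injective model structure. Then $p$ has the right lifting property against every trivial cofibration of the injective structure, i.e.\ against every acyclic monomorphism. Lemma \ref{2.3.9} then forces $p$ to be surjective, hence a fibration in the projective model structure. If, in addition, $p$ is a trivial fibration in the injective model structure, then it has the right lifting property against all monomorphisms (including acyclic ones) and is a quasi-isomorphism; combining the two, $p$ is a surjective quasi-isomorphism, i.e.\ a projective trivial fibration. Thus the identity $(\cA,K)\cmod_w\to(\cA,K)\cmod_w$ from the injective to the projective structure is a right Quillen functor. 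Note that both halves of Lemma \ref{2.3.9} are stated for (weak) $(\cA,K)$-modules, so the same argument works for $(\cA,K)\cmod$ as well.

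Second I would upgrade this to a Quillen equivalence. By Theorems \ref{2.3.1} and \ref{2.5.3} the weak equivalences in both model structures are precisely the quasi-isomorphisms, so the two classes coincide as subcategories. The standard criterion for a Quillen adjunction given by a pair of identity functors to be a Quillen equivalence is then tautological: for every object $X$ cofibrant in the projective structure and every object $Y$ fibrant in the injective structure, a morphism $X\to Y$ is a weak equivalence for one structure if and only if it is a quasi-isomorphism if and only if it is a weak equivalence for the other. Hence the derived unit and counit, which are literally identity morphisms, are weak equivalences in both senses. The same verification applies to $(\cA,K)\cmod$.

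There is no substantive obstacle left once Lemma \ref{2.3.9} is in hand; the only non-formal step is the identification of injective fibrations with surjections, which rests on the explicit cone construction of \ref{2.3.7}. After that the proof is essentially a formality about two model structures on the same underlying category sharing the same weak equivalences, which is precisely why the paper introduces the theorem as being immediate from Lemma \ref{2.3.9}.
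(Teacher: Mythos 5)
Your argument is correct and matches the paper's approach: the paper disposes of this theorem with the single remark that it is immediate from Lemma \ref{2.3.9}, and you have correctly unpacked that remark, using Lemma \ref{2.3.9} to see that injective fibrations (maps with the right lifting property against acyclic monomorphisms) are surjective, hence projective (acyclic) fibrations, and then observing that the shared class of weak equivalences makes the Quillen equivalence criterion tautological for the identity adjunction.
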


\end{document}